\newcommand{\cmmnt}[1]{}
\tikzstyle{vertex}=[circle, draw=black, fill=black, minimum size=2pt, inner sep=2]
\theoremstyle{plain}
\newtheorem{theorem}{Theorem}[section]
\newtheorem{lemma}[theorem]{Lemma}
\newtheorem{coro}[theorem]{Corollary}
\newtheorem{prop}[theorem]{Proposition}
\newtheorem{defi}[theorem]{Definition}
\theoremstyle{definition}
\newtheorem{conjecture}[theorem]{Conjecture}
\DeclareMathOperator{\dist}{dist}
\def \deg {{\rm deg}}
\def \leq {\leqslant}
\def \geq {\geqslant}
\begin{document}

\title{On the second largest eigenvalue of certain graphs in the perfect matching association scheme}

\author{Himanshu Gupta \footnotemark[1], Allen Herman \footnotemark[1], Alice Lacaze-Masmonteil \footnotemark[1], \\ Roghayeh Maleki \footnotemark[1], Karen  Meagher \footnotemark[1]}

\footnotetext[1]{Department of Mathematics and Statistics, University of Regina, Regina, SK, Canada}

%\thanks{The first and third authors  were supported by the Pacific Institute for the Mathematical Sciences Post-Doctoral Fellowship. The second author was supported by NSERC Discovery Grant RGPIN-2023-00014. The fifth author was supported by NSERC Discovery Grant RGPIN-2025-04101.}

\maketitle

\begin{center}
{\bf Abstract}
\end{center}

The perfect matching association scheme is a set of relations on the perfect matchings of the complete graph on $2n$ vertices. The relations between perfect matchings are defined by the cycle structure of the union of any two perfect matchings, and each relation can be represented as a matrix. Each matrix is labeled by an integer partition whose parts correspond to the size do the cycles in the union. Since these matrices form an association scheme, they are simultaneously diagonalizable. Further, it is well-known that the common eigenspaces correspond to the irreducible representations of $S_{2n}$ indexed by the even partitions of $2n$. In this paper, we conjecture that the second largest eigenvalue of the matrices in the perfect matching association scheme labeled by a partition containing at least two parts of size 1 always occurs on the eigenspace corresponding to the representation indexed by  $[2n-2, 2]$. We confirm this conjecture for matrices labeled by the partitions $[2, 1^{n-2}], [3, 1^{n-3}], [2, 2, 1^{n-4}], [4, 1^{n-4}], [3, 2, 1^{n-5}]$, and $[5, 1^{n-5}]$,  as well as any partition in which the first part is sufficiently large.

\noindent {\bf Keywords:} Perfect matchings, association scheme, second largest eigenvalue, spectral gap, partitions, representation of symmetric group. 

\section{Introduction}\label{S:intro}
Let $X$ be a graph with vertex set $V(X)$ and edge set $E(X)$. We use standard terminology from graph theory (see, e.g., \cite[Ch.~1]{GodRoy}). A graph is called {\em $k$-regular} if every vertex has degree $k$. Throughout, all graphs considered are regular and simple (i.e., undirected, without loops or multiple edges). The {\em adjacency matrix} $A$ of a graph $X$ is the matrix indexed by $V(X)$ such that $A_{x,y} = 1$ if $x$ is adjacent to $y$, and $A_{x,y} = 0$ otherwise. For a $k$-regular graph $X$, it is well-known that the largest eigenvalue of $A$ is $k$, and if $X$ is connected, this eigenvalue has multiplicity one. Of particular interest is the second largest eigenvalue of a connected graph $X$, as it carries significant implications for the graph-theoretic properties of $X$. 
 
Namely, the {\em spectral gap} of a graph, defined as the difference between its largest and second largest eigenvalues, plays an important role in understanding connectivity of the graph. For a connected $k$-regular graph, the spectral gap coincides with algebraic connectivity, a concept introduced by Fiedler \cite{fiedler1973algebraic}, which quantifies how well connected the graph is. The spectral gap is central in the study of expander graphs, a class of highly connected sparse graphs with wide-ranging applications in computer science and mathematics. In particular, a larger spectral gap implies stronger expansion properties and faster mixing of random walks on the graph (see, e.g., \cite{alon1985lambda1}, \cite{chung1997spectral}, \cite[Ch.~13]{GodRoy}, \cite{hoory2006expander}, \cite{lovasz1993random}). Therefore, the second largest eigenvalue is important for a graph's algebraic, structural, and probabilistic properties.

In this paper, we aim to identify the second largest eigenvalue of certain graphs in the perfect matching association scheme and compute the corresponding spectral gap. 

\begin{defi} \label{defn:asso}
Let $U$ be a set of $n$ points. A \emph{symmetric association scheme} is a finite set of $n \times n$ binary matrices $\mathcal{A}=\{A_0, A_1, \ldots, A_{d}\}$ such that: 
\begin{enumerate}
\item each $A_i$ is symmetric for all $0\leq i\leq d$;
\item $A_0=I_{n}$, the $n\times n$ identity matrix; 
\item $\sum_{i=0}^dA_i=J_n$, the $n\times n$ all-ones matrix; 
\item $A_iA_j=A_jA_i$ for all $0\leq i,j\leq d$; and
\item for all $0\leq i,j\leq d$, the product $A_iA_j$ lies in the linear span of $\{A_0, A_1, \ldots, A_{d}\}$ over $\mathbb{C}$. 
\end{enumerate}
Two points $u_1, u_2\in U$ are called \emph{$i$-related} if and only if $A_i(u_1, u_2)=1$. The indices $i=0,1,\ldots,d$ are referred to as the \emph{relations} of the scheme. Each matrix $A_i$ is called an \emph{associate} and serves as the adjacency matrix of the corresponding relation on $U$. Each associate $A_i$ is the adjacency matrix for a graph on the vertex set $U$, these graphs are denoted by $X_i$ and are called the graphs of the association scheme
\end{defi}

By axioms (1) and (5) of Definition \ref{defn:asso}, each row and column of associate $A_i$ has the same number of non-zero entries, which we denote as $v_i$; the \emph{valency} of $A_i$. In other words, the graphs $X_i$ of the scheme are $v_i$-regular. See \cite[Ch.~3]{GodMeaBook} for a brief but thorough introduction to association schemes. 

A \textit{perfect matching} of a graph $X$ is a $1$-regular spanning subgraph of $X$. In this paper, we are interested in a particular association scheme known as the \emph{perfect matching association scheme}. This is an association scheme whose set of points are the perfect matchings of $K_{2n}$, the complete graph on $2n$ vertices. In order to define the relations of this scheme, we use integer partitions of $n$. If $\mu$ is a partition of $n$ with $k$ parts $\mu_1\geq \mu_2\geq \cdots\geq \mu_k$ satisfying $\sum_{i=1}^{k}\mu_i=n$, we write $\mu=[\mu_1, \mu_2, \ldots, \mu_k]$ and $\mu \vdash n$. 

\begin{defi} The \emph{perfect matching association scheme} is an association scheme whose set of points is the set of the perfect matchings of $K_{2n}$. This association scheme is the set $\mathcal{A}_{2n}=\{A_\mu \ | \ \mu \vdash n\}$ such that, if $\mu=[\mu_1, \mu_2, \ldots, \mu_k]$ is a partition of $n$, then two perfect matchings are $\mu$-related if and only if their union is a set of vertex-disjoint cycles of lengths $2\mu_1, 2\mu_2, \ldots, 2\mu_k$.  
\end{defi}

See Figure \ref{fig:example} for an illustration of two perfect matchings of $K_{12}$ that are adjacent in $X_{[3,2,1]}$. There is a clear bijection between the even partitions of $2n$ (i.e., each part is even) and the set of all partitions of $n$. Therefore, we have a matrix in $\mathcal{A}_{2n}$ for each even partition of $2n$. In Section \ref{S:term}, we further describe $\mathcal{A}_{2n}$ and explain why this set of matrices forms a symmetric association scheme. 

\begin{figure}[htpb]
\begin{center}
\begin{tikzpicture}[
  very thick,
  every node/.style={circle,draw=black,fill=black!90}, inner sep=2]
 
  \node (n0) at (0.6,2.0) [label=above:$1$] {};
  \node (n1) at (1.7,2.0) [label=above:$2$]{};
  \node (n2) at (0.0,1.0) [label=left:$6$]{};
  \node (n3) at (2.3,1.0) [label=right:$3$]{};
  \node (n4) at (0.6,0.0) [label=below:$5$]{};
  \node (n5) at (1.7,0.0)[label=below:$4$] {};
  \node (n6) at (5,2.0) [label=above:$7$]{};
  \node (n7) at (4,1.0) [label=left:$10$]{};
  \node (n8) at (6,1.0) [label=right:$8$]{};
  \node (n9) at (5,0.0) [label=below:$9$]{};
  \node (n10) at (7.7,1.6) [label=above:$11$]{};
  \node (n11) at (7.7,0.2) [label=below:$12$]{};
 
    \path[every node/.style={font=\sffamily\small}]
   (n11) edge [bend left=30] (n10)
   (n0) edge (n2)
   (n1) edge (n3)
   (n4) edge (n5)
   (n6) edge (n8)
   (n9) edge (n7);

   \path[every node/.style={font=\sffamily\small},  dashed]
   (n11) edge (n10)
   (n0) edge (n1)
   (n2) edge (n4)
   (n3) edge (n5)
   (n6) edge (n7)
   (n8) edge (n9);
\end{tikzpicture}
\end{center}
\caption{Two perfect matchings of $K_{12}$ drawn in dashed and solid, respectively, that overlap to form three cycles of lengths $6, 4$, and $2$, respectively.}
\label{fig:example}
\end{figure}

Since $\mathcal{A}_{2n}$ is a symmetric association scheme, the matrices in $\mathcal{A}_{2n}$ pairwise commute by the axiom (4) of Definition \ref{defn:asso}. Consequently, they are simultaneously diagonalizable, and thus, have the same eigenspaces. The eigenspaces of the matrices in $\mathcal{A}_{2n}$ correspond to the irreducible representations of the symmetric group $S_{2n}$ indexed by the even partitions of $2n$ \cite{Decomp}; see \cite[Ch.~2]{Sagan} for a description of these irreducible representations.  Although the eigenspaces are known, much less is understood about the spectrum of the matrices of $\mathcal{A}_{2n}$. The largest eigenvalue of $A_\mu$ is its valency $v_{\mu}$ and will occur on the $[2n]$-eigenspace \cite{Lindzey, MacDonald}. Macdonald \cite{MacDonald} provides explicit formulae to compute the eigenvalues occurring on three eigenspaces and derives complete spectra for two matrices in $\mathcal{A}_{2n}$.The graph $X_{[2,1,\ldots,1]}$, also known as the {\em flip graph on perfect matching of complete graphs}, is well-studied. Diaconis and Holmes \cite{DiaHol} studied random walks on $X_{[2,1,\ldots,1]}$ and computed its eigenvalues. Lasrtly, Lindzey \cite{JLindzey} derived closed-form formulas for the spectrum of $A_\mu$ using Jack polynomials and Srinivasan \cite{Srinivasan} defined an inductive algorithm that can be used to obtain closed-form formulae for the spectrum of $A_\mu$ in terms of certain content evaluating power symmetric functions on $2n$ variables. 

The spectrum of matrices in the perfect matching association scheme has also been studied in the context of Erd\H{o}-Ko-Rado theorems on perfect matchings. Of particular interest to researchers is the perfect matching derangement graph. This graph is constructed relative to the set of partitions $S=\{\mu \ | \ \mu \vdash n, 1\not \in \mu\}$, that is, $S$ contains all partitions of $n$ with no parts of size $1$. The \emph{perfect matching derangement graph} is the graph whose adjacency matrix is $M(2n)=\sum_{\mu \in S} A_\mu$. Each eigenspace of $M(2n)$ is the union of some of the eigenspaces of the perfect matching association scheme meaning that the eigenvalues of $M(2n)$ are sums of eigenvalues of $A_\mu$. The spectrum of $M(2n)$ is instrumental in obtaining bounds on the largest independent set of the perfect matching graph, thereby yielding tight upper-bounds on the size of the largest $t$-intersecting sets of perfect matchings \cite{FalMea, Lindzey, tLindzey}.  Other properties of the spectrum of $M(2n)$ have also been examined. Namely, Koh et al. \cite{AltSign} and Renteln \cite{Renteln} confirmed a conjecture of Meagher and Godsil \cite{GodMeaBook} and Lindzey \cite{Lindzey} by showing that $M(2n)$ has the alternating sign property.

The main focus of this paper is the second largest eigenvalue of certain graphs in $\mathcal{A}_{2n}$. As stated above, this eigenvalue is of particular interest since it can be used to make certain deductions on the connectivity of the graph.  The aim of this paper is two-fold. First, we aim to identify the eigenspace on which the second largest eigenvalue of certain matrices occur. The graphs of interest are those that are indexed by partitions  $\mu$ with at least two parts of size one. Secondly, once we have identified our second largest eigenvalue, we aim to compute the spectral gap of the corresponding graphs. To that end, we make the following conjecture. 
\begin{conjecture}\label{conj:main} 
Let $\mu$ be a partition of $n$ with at least two parts of size $1$, or $\mu=[n-1,1]$ with $n\neq 4$. Then the second largest eigenvalue of $A_{\mu}$ occurs on the irreducible representation of $S_{2n}$ indexed by $2\lambda$ where $\lambda = [n-1,1]$. 
\end{conjecture}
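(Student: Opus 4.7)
The plan is to confirm the conjecture case-by-case using explicit eigenvalue formulas. Macdonald's results already supply closed forms for eigenvalues on three specific eigenspaces, and Srinivasan's inductive algorithm expresses $\xi_{2\lambda}(A_\mu)$ as an evaluation of certain content-based symmetric functions on the Young diagram of $2\lambda$. These two tools together should yield tractable polynomial-type expressions in the parts of $\mu$.

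The first step is to derive a clean closed form for the target eigenvalue $\xi_{[2n-2,2]}(A_\mu)$. Since the diagram of $[2n-2,2]$ has contents $\{0,1,\ldots,2n-3\}\cup\{-1,0\}$, specializing Srinivasan's formula (or invoking Macdonald's formula directly, if this eigenspace is among the three he treats) should produce an expression depending only on $n$ and a small amount of combinatorial data extracted from $\mu$, such as the multiplicities of small parts and the value of the first part. This closed form is the benchmark against which all other eigenvalues will be compared.

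The second step handles the specific partitions $[2,1^{n-2}]$, $[3,1^{n-3}]$, $[2,2,1^{n-4}]$, $[4,1^{n-4}]$, $[3,2,1^{n-5}]$, $[5,1^{n-5}]$, and $[n-1,1]$ (for $n\neq 4$). Because each of these $\mu$ is nearly a hook with a small ``head,'' the Murnaghan--Nakayama-type recursion underlying the character $\chi^{2\lambda}(2\mu)$ should greatly simplify: most ribbons must run along the single long row of $2\lambda$, leaving only a small number of surviving contributions. For each fixed $\mu$ in the list, I would compute or bound $\xi_{2\lambda}(A_\mu)$ for every even partition $2\lambda$ of $2n$ other than $[2n]$ and $[2n-2,2]$, reducing the claim to a polynomial inequality in $n$ that can be verified uniformly above a small threshold, with the remaining small $n$ checked by direct computation. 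The third step addresses the ``first part sufficiently large'' regime: fixing the multiset of smaller parts of $\mu$ and treating $\mu_1$ as a free variable makes each $\xi_{2\lambda}(A_\mu)$ a polynomial in $\mu_1$. The strategy is to identify $2\lambda=[2n-2,2]$ as the unique non-trivial even partition whose eigenvalue has the largest degree and leading coefficient in $\mu_1$, so that domination follows for $\mu_1$ sufficiently large relative to the fixed tail.

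The main obstacle is uniformity: general bounds on $\chi^{2\lambda}(2\mu)$ across all even $\lambda$ are notoriously difficult, so even with a closed form for $\xi_{[2n-2,2]}(A_\mu)$, ruling out \emph{every} competing eigenspace requires either a clean structural argument or careful case work, neither of which is obvious a priori. A secondary difficulty arises in the ``first part large'' regime when several eigenspaces produce polynomials of the same degree in $\mu_1$; distinguishing among these forces one to descend to sub-leading coefficients and to control the interplay between the leading term in $\mu_1$ and the dependence on the fixed tail, which is why the hypothesis has to be ``sufficiently large'' rather than effective.
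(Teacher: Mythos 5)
Your proposal correctly identifies the benchmark (a closed form for the eigenvalue on the $[2n-2,2]$-eigenspace, which is Macdonald's formula) and correctly names the central obstacle (uniform control of every competing eigenspace), but it does not supply the mechanisms that actually overcome that obstacle, so as it stands it is a plan with the hardest step left open. The paper closes the gap in two entirely different ways that are absent from your outline. For $\mu=[n-k,\mu']$ with $n$ large relative to $k$, it uses the trace identity $v_\mu(2n-1)!!=\sum_{\lambda\vdash n}(\phi^\lambda_\mu)^2 f^{2\lambda}$: any eigenvalue at least as large in absolute value as $\phi^{[n-1,1]}_\mu$ forces its eigenspace to have dimension at most roughly $4n^{3/2}\prod_i m_i!(2\mu_i)^{m_i}$, which contradicts the known classification of irreducible $S_{2n}$-representations of dimension below $\binom{2n}{3}-\binom{2n}{2}$ (only $[2n]$ and $[2n-2,2]$ survive among even partitions). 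This single inequality rules out \emph{all} other eigenspaces simultaneously, with no per-$\lambda$ case analysis. For the six specific families, the paper does not try to evaluate or bound $\phi^\lambda_\mu$ for each $\lambda$ directly; instead it inducts on $n$, showing that the increment $\Delta E_\mu(\lambda)=E_\mu(c(2\lambda^+))-E_\mu(c(2\lambda))$ incurred by adding a box to $\lambda$ is always dominated by the increment from $[n-1,1]$ to $[n,1]$. The increment depends only on the row index $i$, the part $\lambda_i$, and a few power sums $p_1,p_2,p_3$ of the contents, all of which admit clean uniform bounds; this is what makes the ``every $\lambda$'' quantifier tractable. Your proposed route of bounding each eigenvalue separately, or comparing leading coefficients in $\mu_1$ across all $\lambda$, would require exactly the uniform character estimates you concede are unavailable.

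There is also a technical misstep worth flagging: the eigenvalues of $A_\mu$ are not the character values $\chi^{2\lambda}(2\mu)$, so a Murnaghan--Nakayama recursion on $\chi^{2\lambda}$ evaluated at a permutation of cycle type $2\mu$ does not compute them. They are zonal spherical functions, $\phi^\lambda_\mu=\frac{v_\mu}{2^n n!}\sum_{h\in H_n}\chi^\lambda(x_\mu h)$, i.e., averages of $\chi^{2\lambda}$ over an entire coset of the hyperoctahedral group $H_n=S_2\wr S_n$; the paper explicitly notes this formula is impractical to use directly, which is why it relies on Srinivasan's content-evaluation symmetric functions instead. Your step two would therefore need to be rebuilt on the zonal (Jack parameter $\alpha=2$) analogue of Murnaghan--Nakayama, which is substantially more complicated than the ordinary character recursion and does not obviously yield the simplification you anticipate for near-hook $\mu$.
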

We verified that Conjecture \ref{conj:main} holds for all $n \leqslant 15$. As already mentioned, Srinivasan \cite{Srinivasan} developed a recursive algorithm, implemented in Maple, for computing the eigenvalues of the graphs in the perfect matching association scheme.\footnote{Srinivasan’s original Maple program is available at \url{https://www.math.iitb.ac.in/~mks/papers/EigenMatch.pdf}}. We translated this algorithm into SageMath and used it to compute the eigenvalue tables and verify the conjecture for $n \leqslant 15$.\footnote{The SageMath implementation, accompanying Jupyter notebook, and eigenvalue data (in Excel and PDF formats) are available at \url{https://github.com/Himanshugupta23/Perfect-Matching-Association-Scheme}. The corresponding tables of eigenvalues (up to $n=15$) are also available online in Google Sheets: \url{https://docs.google.com/spreadsheets/d/1wRQWCRfhrKxmFPYhRuKKMp7pMdueVNhP5XqE2PHgHH4/edit?gid=695415463\#gid=695415463}} The tables containing eigenvalues for $n \leq 7$ are also included in the Appendix~\ref{appendix:small_tables} of this paper.
In this paper, we confirm Conjecture \ref{conj:main} for partitions of the form $\mu=[n-k, \mu']$, where $\mu' \vdash k$, when $n$ is large enough with respect to $k$ and for

$$
\mu \in \{[n-1,1],[2,1^{n-2}], [3,1^{n-3}], [2,2,1^{n-4}], [4,1^{n-4}], [3,2, 1^{n-5}], [5,1^{n-5}] \}.
$$

This paper is structured as follows. In Section \ref{S:term}, we provide the necessary terminology and background.  We will confirm Conjecture \ref{conj:main} in one of two ways for the aforementioned partitions $\mu$. The first method, described in Section \ref{S:trace}, makes use of the trace of $A_{\mu}^2$ to show that the second largest eigenvalue in absolute value for partitions of the form $\mu=[n-k, \mu']$, where $\mu' \vdash k$ and $n$ is large enough with respect to $k$, occurs on the irreducible representation of $S_{2n}$ indexed by $[2n-2, 2]$. This statement implies Conjecture \ref{conj:main} in this case since, when $\mu$ contains at least two parts of size 1, the eigenvalue of interest is always positive. In Section \ref{S:sym}, we use the existing combinatorial formulae for the spectrum of $A_{\mu}$ obtained from \cite{Srinivasan}, in conjunction with an inductive argument, to prove Conjecture \ref{conj:main} for partitions $\mu$ given above for all $n \geqslant 15$. Lastly, in Section \ref{S:gap}, we compute the difference between the eigenvalues occurring on the $[2n]$-eigenspace and the  $[2n-2,2]$-eigenspace for certain partitions $\mu$. We conjecture this number to be the spectral gap of the corresponding $X_\mu$. 

\section{Preliminaries}\label{S:term}

Our first objective is to give a more detailed description of the perfect matching association scheme. We begin by giving key definitions. Given a positive integer $n$, a list $\lambda = [\lambda_1, \lambda_2,\ldots, \lambda_k]$ of positive integers is a \emph{partition} of $n$ if $n=\sum_i\lambda_i$. If $\lambda_i$ occurs $m_i$ times  in  $\lambda\vdash n$, then we abbreviate by writing $\lambda_i^{m_i}$ and say that $\lambda_i$ has multiplicity $m_i$. We will adopt the standard convention that $\lambda_i\geqslant \lambda_{i+1}$. A particular type of partition of interest is a \emph{hook partition} which is a partition of $n$ of the form $[\ell, 1^{n-\ell}]$ where $\ell\geqslant 2$. A partition in which all parts are even is called an \emph{even partition}.  If $\lambda=[\lambda_1, \lambda_2, \ldots,\lambda_k]$ is a partition of $n$, then $2\lambda=[2\lambda_1, 2\lambda_2, \ldots,2\lambda_k]$ is an even partition of $2n$. Clearly,  $g:  \lambda \rightarrow 2\lambda$ is a bijection between all partitions of $n$ and all even partitions of $2n$. 

Next, we define a partial ordering on the partitions of $2n$ known as the \emph{dominance ordering}. Let $\lambda=[\lambda_1, \lambda_2, \ldots, \lambda_{\ell}]$ and $\lambda'=[\lambda_1', \lambda_2', \ldots, \lambda_k']$ be two partitions of $n$. We write $\lambda \trianglerighteq \lambda'$ if $\lambda_1+\lambda_2+\cdots+\lambda_i \geqslant \lambda'_1+\lambda_2'+\cdots+\lambda_i'$ for all $i \geqslant 1$. If $i>\ell$ ($i>k$), then we assume that $\lambda_i=0$ ($\lambda'_i=0$). Under this ordering, the set of partitions of $n$ forms a partially ordered set.

Each partition of $n$ can also be visually described using a Young tableau. Let $\lambda=[\lambda_1, \lambda_2, \ldots, \lambda_k]$ be a partition of $n$. The \emph{Young tableau} of $\lambda$ is a left-justified table with $n$ boxes comprised of $k$ rows such that row $i$ contains $\lambda_i$ boxes.  

The eigenspaces of $\mathcal{A}_{2n}$ are intrinsically linked to the irreducible representations of $S_{2n}$.  For a thorough treatment of these irreducible representations, we refer the reader to \cite[Ch.~2]{Sagan}. The irreducible representations of $S_{2n}$ are indexed by the partitions of $2n$. For each $\lambda \vdash 2n$, we have an irreducible representation in the form of a Specht module $S^{\lambda}$. By $\chi^{\lambda}(\sigma)$, we denote the irreducible character evaluated at $\sigma\in S_{2n}$ for the irreducible representation $S^{\lambda}$. 

As stated in the introduction, the perfect matching association scheme is the association scheme whose set of points is the set of perfect matchings of $K_{2n}$. We will let $\mathcal{M}_{2n}$ denote the set of perfect matchings of $K_{2n}$. If $V(K_{2n})=\{1,2,\ldots, 2n\}$ and $P$ is a perfect matching of $K_{2n}$, then we write $P=a_1 a_2 | a_3 a_4 | \cdots | a_{2n-1} a_{2n}$, where $a_i \in \{1,2,\ldots, 2n\}$ and each pair $a_ia_j$ is an edge of $P$. An elementary counting argument shows that there are $(2n-1)(2n-3)(2n-5)\cdots 3=(2n-1)!!$ distinct perfect matchings of $K_{2n}$, thereby implying that all graphs in $\mathcal{A}_{2n}$ have $(2n-1)!!$ vertices. Note that we use the notation $(2n)!!=(2n)(2n-2)(2n-4)\cdots2 = 2^n (n!)$.  Let $\sigma \in S_{2n}$ and $P$ be a perfect matching of $K_{2n}$. By $\sigma P$, we denote the perfect matching formed by permuting the elements in $V(K_{2n})$ via $\sigma$. Under this action of $S_{2n}$ on $\mathcal{M}_{2n}$, we see that $S_{2n}$ acts transitively on the set of perfect matchings $\mathcal{M}_{2n}$. 
 
Next, we give a detailed description of the perfect matching association scheme with the aim of further describing the eigenspaces of the scheme. To do so, we consider the natural action of $S_{2n}$ on the set of perfect matchings $\mathcal{M}_{2n}$. The stabilizer of a single perfect matching is the subgroup $H_n= S_2 \wr S_n$ also known as the {\em hyperoctahedral subgroup}. Observe that $|H_n|=2^n (n!)$ and thus, $[S_{2n} : H_n]=(2n-1)!!$. There is a bijection between the cosets of $H_n$ and the set of perfect matchings of $K_{2n}$ defined relative to the perfect matching $P=1 \, 2 | 3\,4 | \cdots | 2n-1 \, 2n$ and its stabilizer $H_n$. If $\mu \in S_{2n}$, we map $\mu P$ to the coset $\mu H_n$. The group $S_{2n}$ acts on the cosets of $H_n$ by right multiplication. The permutation matrices that arise from the action of $S_{2n}$ on these cosets is the induced representation $1{\uparrow}^{S_{2n}}_{H_n}$. This induced representation admits the following decomposition into irreducible representations of $S_{2n}$ \cite{Decomp}:
 
 $$1{\uparrow}^{S_{2n}}_{H_n}=\oplus_{\lambda \vdash n}S^{2\lambda}.$$
 
Note that $H_n$ is multiplicity-free. The group $S_{2n}$ also acts on pairs of cosets of $H_n$. The orbits of this action are called the \emph{orbitals}. Two cosets belong to the $\mu_1, \ldots, \mu_k$ orbital exactly when the union of their corresponding perfect matchings form a set of cycles with lengths $2\mu_1, \ldots, 2\mu_k$. The set of pairs of cosets in the same orbital corresponds to the edges of a graph. The adjacency matrices of the graphs that correspond to the orbitals of $H_n$ are precisely the associates of $\mathcal{A}_{2n}$.   In addition, since $H_n$ is multiplicity-free, the pair $(S_{2n}, H_n)$ forms a \emph{Gelfand pair}. The orbitals that arise from a Gelfand pair form a Schurian association scheme, thereby implying that $\mathcal{A}_{2n}$ is in fact an association scheme. 

In addition, the matrices in $\mathcal{A}_{2n}$ commute with the permutation matrices of $1{\uparrow}^{S_{2n}}_{H_n}$. Consequently, these permutation matrices are endomorphisms of the eigenspaces of the associates, meaning that the eigenspaces of the associates must be irreducible representations of $S_{2n}$ that appear in the decomposition of  $1{\uparrow}^{S_{2n}}_{H_n}$ given above. In conclusion, the eigenspaces of  $\mathcal{A}_{2n}$ are unions of the irreducible representations of the symmetric group $S_{2n}$ indexed by the even partitions of $2n$. Note that eigenspaces are described as unions of irreducible representations because an associate $A_\mu$ may have the same eigenvalue $\tau$ occur on two or more modules $S^\lambda$. We adopt the standard convention of referring to the Specht modules indexed by the even partitions of $2n$ as the common eigenspaces of our association scheme and describing the eigenspaces of the individual associates as union of these Specht modules. 

Next, we introduce some key notation to describe the eigenvalues of our scheme. Let $\mu$ and $\lambda$ be two partitions of $n$. Throughout this paper, we will use $\mu$ to refer to the relation between two perfect matchings and $2\lambda$ to index the eigenspaces of our association scheme. We shall then write the eigenvalue of $X_\mu$ on the $2\lambda$-eigenspace as $\phi_{\mu}^{\lambda}$. Although $\lambda $ and $\mu$ are partitions of $n$, it is understood that the corresponding eigenspace and relation corresponds to the even partitions $2\lambda$ and $2\mu$, respectively. We say that $\phi_{\mu}^{\lambda}$ is the eigenvalue of $X_\mu$ that occurs on the $\lambda$-eigenspace. 

The \emph{matrix of eigenvalues} of a symmetric association scheme, also known as the \emph{character table of the scheme}, is a table whose rows are indexed by the common eigenspaces of the associates and whose columns are indexed by the relations of the scheme. In the context of the perfect matching association scheme, the rows and columns are indexed by even partitions of $2n$. We adopt the following ordering: rows are in decreasing order from top to bottom, while columns are in increasing order from left to right where the order is in terms of the dominance ordering. See Appendix~\ref{appendix:small_tables} for the character tables of the perfect matching association scheme for $2\leq n \leq 7$.

\subsection{Combinatorial formulae for the spectrum of \texorpdfstring{$\mathcal{A}_{2n}$}{A{2n}}}
We now survey the known combinatorial formulae for the eigenvalues of the certain matrices and eigenspaces of our scheme. There are three eigenspaces for which we have combinatorial formulae for all graphs in the association scheme. These eigenspaces are indexed by the partitions $[n], [1^{n}]$,  and $[n-1,1]$, respectively.  In \cite{MacDonald}, Macdonald derived a combinatorial formula for computing the degree of $A_\mu$, denoted $v_{\mu}$, which always occurs on the $[n]$-eigenspace. However, Macdonald's results are not explicitly stated as such. Lindzey \cite{Lindzey} translated the work of Macdonald to obtain the following result on the valency of the graphs of $\mathcal{A}_{2n}$. 

\begin{lemma}\cite{MacDonald}  \label{lem:degree}
Let  $\mu = [\mu_1^{m_1}, \dots,  \mu_k^{m_k}]$ and $n=\sum_{i=1}^k m_i \mu_i $. Then the valency of $A_\mu$ is given by
$$v_{\mu} = \phi^{[n]}_{\mu} =\frac{2^n n!}{2^{m_1+\dots+m_k} \prod_i (m_i!) (\mu_i^{m_i})}. $$
\end{lemma}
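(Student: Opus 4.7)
The plan is to directly enumerate the perfect matchings $Q\in\mathcal{M}_{2n}$ that are $\mu$-related to the fixed reference matching $P = 1\,2 \,|\, 3\,4 \,|\, \cdots \,|\, (2n-1)\,2n$; the graph $X_\mu$ is regular, so the valency $v_\mu$ equals the number of such $Q$. I would build $Q$ in two stages. First, since each cycle of length $2\mu_i$ in $P\cup Q$ contains exactly $\mu_i$ of the $n$ edges of $P$, I partition the edges of $P$ into unordered blocks with $m_i$ blocks of size $\mu_i$ for each $i$; the number of such set-partitions is
$$\frac{n!}{\prod_i (\mu_i!)^{m_i}\, m_i!}.$$

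Second, for each block $B$ of $\mu_i$ $P$-edges, I would count the ways to choose $\mu_i$ new edges on the $2\mu_i$ endpoints of $B$ so that together with $B$ they form a single $2\mu_i$-cycle; the claim is that this count equals $(\mu_i-1)!\cdot 2^{\mu_i-1}$. I would prove it by treating each $P$-edge in $B$ as a super-vertex: a directed cyclic ordering of the $\mu_i$ super-vertices (giving $(\mu_i-1)!$ choices) combined with a choice of orientation for each super-vertex (a factor of $2^{\mu_i}$) uniquely specifies an oriented $2\mu_i$-cycle through the given $P$-edges, and reversing the global direction of the cycle simultaneously reverses the cyclic order and flips every super-vertex orientation, so each undirected cycle is produced exactly twice. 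The pairing has no fixed points because flipping an orientation is never the identity, so this is a free involution on the oriented count. The degenerate case $\mu_i = 1$ (the ``cycle'' is a doubled edge, forcing $Q' = P'$ on those two vertices) is consistent with the formula.

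Multiplying the two counts and simplifying via $(\mu_i-1)!/\mu_i! = 1/\mu_i$ and $\sum_i (\mu_i-1)m_i = n - \sum_i m_i$ yields
$$v_\mu \;=\; \frac{n!}{\prod_i m_i!\,\mu_i^{m_i}}\cdot 2^{\,n - \sum_i m_i} \;=\; \frac{2^n\, n!}{2^{\sum_i m_i} \prod_i m_i!\,\mu_i^{m_i}},$$
as required. The equality $v_\mu = \phi^{[n]}_\mu$ is automatic: the submodule $S^{[2n]}$ in $1{\uparrow}^{S_{2n}}_{H_n}$ is spanned by the all-ones vector on $\mathcal{M}_{2n}$, and $A_\mu$ acts on this vector by multiplication by the common row sum $v_\mu$. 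The one step I expect to demand real care is the dihedral overcounting in stage two; the remainder is routine multinomial manipulation.
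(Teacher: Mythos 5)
Your argument is correct, and it is worth noting that the paper itself supplies no proof of this lemma: it is quoted from Macdonald via Lindzey's translation, so your proposal is a self-contained elementary derivation rather than a rephrasing of anything in the text. The two counting stages both check out. The partition of the $n$ edges of $P$ into blocks of sizes $\mu_i$ with multiplicities $m_i$ is counted by the standard multinomial $n!/\bigl(\prod_i (\mu_i!)^{m_i} m_i!\bigr)$, and the dihedral step is sound: the correspondence between oriented alternating $2\mu_i$-cycles and pairs (directed cyclic order of the $\mu_i$ super-vertices, orientation of each super-vertex) is a genuine bijection, the reversal involution is fixed-point free since flipping the orientation of an edge $\{a,b\}$ with $a\neq b$ is never the identity, and the degenerate case $\mu_i=1$ (doubled edge, i.e.\ $Q$ agrees with $P$ on that pair) is handled consistently. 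One can sanity-check the per-block count $(\mu_i-1)!\,2^{\mu_i-1}$ against the tables in the appendix, e.g.\ $v_{[3]}=8=2!\cdot 2^2$ for $n=3$. The simplification $\prod_i\bigl((\mu_i-1)!/\mu_i!\bigr)^{m_i}=\prod_i\mu_i^{-m_i}$ and $\sum_i(\mu_i-1)m_i=n-\sum_i m_i$ then gives exactly the stated formula, and the identification $v_\mu=\phi^{[n]}_\mu$ via the all-ones vector spanning $S^{[2n]}$ is the same observation the paper makes. What your approach buys is independence from Macdonald's symmetric-function machinery; what it costs is only the care you already flagged in the overcounting step, which you have handled correctly.
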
 

Of particular importance to our investigation is the eigenvalue of $X_\mu$ that occurs on the $[n-1,1]$-eigenspace. In this case, we have the following combinatorial formula, also found in \cite{MacDonald}.

\begin{lemma}\cite{MacDonald} \label{lem:foreig}
Let  $\mu$ be a partition of $n$ and $r_1(\mu) \geq 0$ denote the multiplicity of $1$ in $\mu$. Then
$$\phi^{[n-1,1]}_{\mu} = v_{\mu} \left(\dfrac{ (2n-1) r_1(\mu) - n}{2n(n-1)}\right).$$
\end{lemma}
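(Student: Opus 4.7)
The plan is to construct an explicit submodule of the perfect matching module isomorphic to $S^{[2n]}\oplus S^{[2n-2,2]}$, read off $\phi^{[n-1,1]}_\mu$ by acting with $A_\mu$ on a convenient basis vector, and reduce the remaining work to a short double count.

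For each $2$-subset $\{i,j\}\subseteq\{1,\ldots,2n\}$, let $f_{\{i,j\}}\colon\mathcal{M}_{2n}\to\mathbb{C}$ be the indicator $f_{\{i,j\}}(M)=1$ if $\{i,j\}\in M$ and $0$ otherwise. The map $\{i,j\}\mapsto f_{\{i,j\}}$ is an $S_{2n}$-equivariant surjection from the permutation module on $2$-subsets of $[2n]$, which decomposes as $S^{[2n]}\oplus S^{[2n-1,1]}\oplus S^{[2n-2,2]}$, onto $V:=\mathrm{span}\{f_{\{i,j\}}\}$. The identities $\sum_{j\neq i}f_{\{i,j\}}=\mathbf{1}$ for each $i$ (each matching contains exactly one edge at~$i$) show that the $S^{[2n-1,1]}$-component lies in the kernel, which is forced in any case since $1{\uparrow}^{S_{2n}}_{H_n}$ contains no copy of $S^{[2n-1,1]}$. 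Because $f_{\{1,2\}}$ is non-constant for $n\geq 2$, the image properly contains the trivial module, so $V\cong S^{[2n]}\oplus S^{[2n-2,2]}$.

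I then exploit the symmetry of $A_\mu f_{\{i,j\}}$ under the stabilizer $\mathrm{Stab}(\{i,j\})\cong S_2\times S_{2n-2}$ in $S_{2n}$. This stabilizer acts transitively on matchings containing $\{i,j\}$ and on matchings avoiding $\{i,j\}$, so $(A_\mu f_{\{i,j\}})(M)$ takes only two values: $\alpha_\mu$ when $\{i,j\}\in M$ and $\beta_\mu$ when $\{i,j\}\notin M$. Therefore
$$A_\mu f_{\{i,j\}}=\beta_\mu\,\mathbf{1}+(\alpha_\mu-\beta_\mu)\,f_{\{i,j\}}.$$
On the other hand, the trivial component of $f_{\{i,j\}}$ is $\tfrac{1}{2n-1}\mathbf{1}$, since a uniformly random perfect matching contains $\{i,j\}$ with probability $1/(2n-1)$; hence $g_{\{i,j\}}:=f_{\{i,j\}}-\tfrac{1}{2n-1}\mathbf{1}$ lies in the $S^{[2n-2,2]}$-component, and applying $A_\mu$ componentwise gives $A_\mu f_{\{i,j\}}=\tfrac{v_\mu}{2n-1}\mathbf{1}+\phi^{[n-1,1]}_\mu\,g_{\{i,j\}}$. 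Matching the coefficients of $f_{\{i,j\}}$ in the two expressions yields $\phi^{[n-1,1]}_\mu=\alpha_\mu-\beta_\mu$.

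What remains is a short combinatorial computation. Summing $(A_\mu f_{\{i,j\}})(M)$ over all $\binom{2n}{2}$ pairs $\{i,j\}$ and using $\sum_{\{i,j\}}f_{\{i,j\}}(M')=n$ produces $n\alpha_\mu+2n(n-1)\beta_\mu=n v_\mu$, hence $\beta_\mu=(v_\mu-\alpha_\mu)/(2(n-1))$. To compute $\alpha_\mu$, observe that if $\{i,j\}\in M\cap M'$ then $\{i,j\}$ is a $2$-cycle in $M\cup M'$, so such an $M'$ exists only when $r_1(\mu)\geq 1$; in that case, deleting $\{i,j\}$ from both matchings gives a bijection onto the $\mu''$-relation in $\mathcal{A}_{2n-2}$, where $\mu''$ is $\mu$ with one part of size $1$ removed, so $\alpha_\mu=v^{(2n-2)}_{\mu''}$. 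Lemma~\ref{lem:degree} yields $v_\mu/v^{(2n-2)}_{\mu''}=n/r_1(\mu)$, and hence $\alpha_\mu=v_\mu\,r_1(\mu)/n$ in all cases (both sides vanishing when $r_1(\mu)=0$). Substituting into $\phi^{[n-1,1]}_\mu=((2n-1)\alpha_\mu-v_\mu)/(2(n-1))$ produces the claimed formula. The only conceptual step is identifying $V$ as the $S^{[2n]}\oplus S^{[2n-2,2]}$-part of the perfect matching module; everything else is routine bookkeeping.
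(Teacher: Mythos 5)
Your argument is correct, and it is worth noting that the paper does not prove this lemma at all: it is quoted from Macdonald as a known result. So you have supplied a self-contained derivation where the authors rely on a citation. Your route is essentially an explicit realization of the mechanism the paper only uses later, in Section~\ref{S:gap}: your $\alpha_\mu$ and $\beta_\mu$ are exactly the quantities $a_\mu$ and $b_\mu$ of the equitable partition $\{PM_{1,2}(2n),\,\mathcal{M}_{2n}\setminus PM_{1,2}(2n)\}$, and your identity $\phi^{[n-1,1]}_\mu=\alpha_\mu-\beta_\mu$ is Proposition~\ref{prop:evalueQuotient}. What you add beyond that proposition is (i) a representation-theoretic justification that the nontrivial eigenvalue of the quotient really lives on $S^{[2n-2,2]}$ and not on some other constituent --- your identification of $V\cong S^{[2n]}\oplus S^{[2n-2,2]}$, using that $S^{[2n-1,1]}$ does not occur in $1{\uparrow}^{S_{2n}}_{H_n}$, is the right way to see this --- and (ii) the closed forms $\alpha_\mu=v_\mu r_1(\mu)/n$ (via the deletion bijection onto the $\mu''$-relation in $\mathcal{A}_{2n-2}$ and Lemma~\ref{lem:degree}) and $\beta_\mu=(v_\mu-\alpha_\mu)/(2(n-1))$ (via the double count using $\sum_{\{i,j\}}f_{\{i,j\}}(M')=n$). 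All steps check out, including the two-orbit transitivity of $S_2\times S_{2n-2}$ on matchings containing and avoiding $\{i,j\}$, and the ratio $v_\mu/v^{(2n-2)}_{\mu''}=n/r_1(\mu)$. The only stylistic quibble is that you could shortcut the first paragraph by observing that, since the matching module is multiplicity-free with constituents indexed by even partitions, any non-constant $\mathrm{Stab}(\{i,j\})$-invariant vector in the span of $\mathbf{1}$ and $f_{\{i,j\}}$ must project nontrivially onto exactly one further constituent, and a degree count forces it to be $S^{[2n-2,2]}$; but your argument via the surjection from the $2$-subset permutation module is clean and complete as written.
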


Observe that $\phi^{[n-1,1]}_{\mu}$ is negative when $\mu$ does not contain a part of size 1, thereby implying that it cannot be the second largest eigenvalue. This observation explains why our conjecture only pertains to partitions with at least two parts of size 1.  

In \cite[Ch.~13]{GodMeaBook}, Godsil and Meagher describe a formula for the eigenvalues of orbital scheme that arise from a Gelfand pair. In Lemma \ref{lem:formulaeobvious} below we give this formula in terms of the Gelfand pair $(S_{2n}, H_n)$. This formula is derived by constructing an eigenvector $\mathbf{x}$ from the $\lambda$-eigenspace and evaluating $A_\mu \mathbf{x}$.  Note that this formula can also be found in \cite{bannai} and is known as a \emph{zonal spherical function} evaluated over the elements of a particular coset of $H_n$. 

\begin{lemma}\cite[Lemma 13.8.3]{GodMeaBook} \label{lem:formulaeobvious}
Let $H_n=S_2 \wr S_n$ and $x_\mu \in S_{2n}$ such that $(H_n, x_{\mu} H_n)$ is a pair of cosets in the $\mu$-orbital of $H_n$. Then

$$\phi_{\mu}^\lambda=\frac{v_{\mu}}{2^n(n!)}\sum_{h \in H_n} \chi^\lambda(x_\mu h).$$
\end{lemma}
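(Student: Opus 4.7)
The plan is to express $\phi_\mu^\lambda$ as the value of the zonal spherical function of the Gelfand pair $(S_{2n}, H_n)$ and then convert it into characters. Because $\mathbf{1}{\uparrow}^{S_{2n}}_{H_n} = \bigoplus_{\lambda \vdash n} S^{2\lambda}$ is multiplicity-free, each $S^{2\lambda}$ contains an $H_n$-fixed unit vector $\mathbf{v}_\lambda$ that is unique up to sign. Writing $\rho$ for the action of $S_{2n}$ on $S^{2\lambda}$, the zonal spherical function
$$\omega_\lambda(g) := \langle \rho(g)\mathbf{v}_\lambda,\, \mathbf{v}_\lambda\rangle$$
is $H_n$-biinvariant, and hence constant on each double coset $H_n x_\mu H_n$ with common value $\omega_\lambda(x_\mu)$.

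The first step is to promote $\omega_\lambda$ to an eigenvector of $A_\mu$ on $\mathbb{C}[S_{2n}/H_n]$. Since $\omega_\lambda$ is right $H_n$-invariant, the assignment $\Phi_\lambda(gH_n) := \omega_\lambda(g)$ is well defined, and a short Frobenius-reciprocity check identifies $\Phi_\lambda$ with the image of $\mathbf{v}_\lambda$ under the (essentially unique) embedding $S^{2\lambda} \hookrightarrow \mathbf{1}{\uparrow}^{S_{2n}}_{H_n}$. Because each $A_\mu$ is $S_{2n}$-equivariant and the decomposition is multiplicity-free, Schur's lemma forces $A_\mu$ to act on this copy of $S^{2\lambda}$ as the scalar $\phi_\mu^\lambda$. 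Evaluating at the identity coset, $(A_\mu\Phi_\lambda)(H_n) = \sum_{g'H_n \subseteq H_n x_\mu H_n} \Phi_\lambda(g'H_n)$, and each of the $v_\mu$ summands equals $\omega_\lambda(x_\mu)$ by biinvariance. Dividing by $\Phi_\lambda(H_n) = \omega_\lambda(e) = 1$ yields $\phi_\mu^\lambda = v_\mu\,\omega_\lambda(x_\mu)$.

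The second step rewrites $\omega_\lambda(x_\mu)$ in character-theoretic form. Let $e_{H_n} := \frac{1}{|H_n|}\sum_{h \in H_n} h \in \mathbb{C}[S_{2n}]$. Under $\rho$ this becomes the Reynolds averaging operator which, because the $H_n$-fixed subspace of $S^{2\lambda}$ is one-dimensional (Gelfand-pair hypothesis), is the orthogonal projection onto $\mathbb{C}\mathbf{v}_\lambda$. Therefore
$$\omega_\lambda(x_\mu) = \tr\bigl(\rho(x_\mu)\,\rho(e_{H_n})\bigr) = \chi^{2\lambda}(x_\mu\, e_{H_n}) = \frac{1}{|H_n|}\sum_{h \in H_n}\chi^{2\lambda}(x_\mu h).$$
Substituting back into $\phi_\mu^\lambda = v_\mu\,\omega_\lambda(x_\mu)$ and using $|H_n| = 2^n n!$ gives the claimed formula.

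The only delicate step I foresee is the Frobenius-reciprocity identification of $\Phi_\lambda$ with the canonical $H_n$-fixed vector of $S^{2\lambda}$ inside the induced module, which requires care with left/right actions and inner-product normalizations; once this is in place, everything else is routine bookkeeping driven by the Gelfand-pair hypothesis already recorded in the excerpt.
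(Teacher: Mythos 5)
Your proof is correct, and it is essentially the argument the paper alludes to (the paper itself only cites \cite[Lemma 13.8.3]{GodMeaBook} and notes that the formula ``is derived by constructing an eigenvector $\mathbf{x}$ from the $\lambda$-eigenspace and evaluating $A_\mu \mathbf{x}$'', which is exactly your Step 1, with Step 2 being the standard conversion of the zonal spherical function into an averaged character via the projection $\rho(e_{H_n})$). The only remark worth making is notational: the lemma as printed writes $\chi^\lambda$, whereas under the paper's indexing conventions the character involved is that of $S^{2\lambda}$, which is what you correctly use.
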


Although the approach given by Lemma \ref{lem:formulaeobvious} appears to be sensible, it is in fact impractical since we must first extract the elements of a single coset $x_\mu H_n$ and then derive the sum of the irreducible $\lambda$-characters. 

Combinatorial formulae for the spectrum of several matrices are also known. In \cite{DiaHol},  Diaconis and Holmes  derived a formula for the spectrum of $A_{[2,1^{n-2}]}$. Most combinatorial formula for the spectrum of matrices in $\mathcal{A}_{2n}$ can be found in \cite{Srinivasan} where Srinivasan derives an inductive algorithm to derive combinatorial formulae for the spectrum of  matrices of the perfect matching association scheme. These formulae are given in terms of content evaluating symmetric functions. Below, we describe Srinivasan's constructions as they are instrumental to our computations in Section \ref{S:sym}.

To describe Srinivasan's approach, we first define a set of symmetric functions evaluated relative to the content of a Young tableau of a partition $2\lambda$. First, we describe the content of Young tableaux. Let $f(x_1, x_2, \ldots, x_{2n})$ be a symmetric function of $2n$ variables. We assign $x_1$ to the top-leftmost box of the Young tableau of $\lambda$. Variables are then assigned from left-to-right and then top-to-bottom to the $2n$ boxes of $2\lambda$. See Figure \ref{subfig:inva} for the assignment of $2n$ variables to the boxes of a Young tableau for $n=6$. In \cite{Srinivasan} and this paper, the \emph{content} of the box $b$ located at the $(i,j)$-coordinate is $j-i$. See Figure \ref{fig:tabcon} for an example of the content of $2\lambda=[6,4,2]$. The $2n$-tuple $c(\lambda)$ will denote the content of the boxes of $2\lambda$.  If $f(x_1, x_2, \ldots, x_{2n})$ is a symmetric function of $2n$ variables, then $f(c(\lambda))$ is $f$ evaluated at the content of the Young tableau of  $2\lambda$. 

\ytableausetup{mathmode, boxsize=1.7em}
\begin{figure} [htpb]
\begin{subfigure}{0.49\textwidth}
\centering
\begin{ytableau}
x_1 & x_2&x_3 &x_4&x_5&x_6\\
x_7 & x_8 & x_9 &x_{10}\\
x_{11} & x_{12} 
\end{ytableau}
\caption{Assignment of $2n$ variables to the boxes of a Young tableau for $n=6$.}
\label{subfig:inva}
\end{subfigure}
\begin{subfigure}{0.49\textwidth}
\centering
\begin{ytableau}
0 & 1&2 &3&4 &5\\
-1 & 0 & 1 &2\\
-2 & -1 
\end{ytableau}
\caption{Content of Young tableau corresponding to the partition $[6,4,2]$.}
\label{fig:tabcon}
\end{subfigure}
\caption{Young tableau for partition $2\lambda=[6,4,2]$ with content.}
\end{figure}

To derive formulae for the spectrum of the relations of our scheme, we will work  in the algebra of symmetric functions over $\mathds{Q}[t]$ which we denote as $\Lambda [t]$.

For $i$ a positive integer, we define the following power symmetric function:
$$ p_i=\sum_{j=1}^{2n} x_j^i.$$

\noindent Let $\lambda=[\lambda_1, \lambda_2, \ldots, \lambda_k]$ be a partition of $n$. The \emph{power symmetric function of $\lambda$} is defined as follows:

$$p_\lambda=p_{\lambda_1}p_{\lambda_2}\cdots p_{\lambda_k}.$$

The set $\{p_\lambda \ | \ \lambda \vdash n \}$ is a $\mathds{Q}[t]$-module basis for $\Lambda[t]$ \cite{Sagan}. Srinivasan provides an algorithm that generates another basis comprised of symmetric functions $E_\mu$ such that $E_\mu(c(\lambda))=\phi_\mu^\lambda$. Note that these formulae are not dependent on the shape $\lambda$.  See Section \ref{S:sym} for the formulae given in \cite{Srinivasan} in terms of power symmetric functions. Using the character tables for small values of $n$, we derive two other formulae for $E_\mu$ in terms of the basis of symmetric power functions. We conclude with a brief note on the standard convention when evaluating these symmetric functions over $\mathds{Q}[t]$ by content evaluation. Namely, we replace $t$ with $|\lambda|$, where $|\lambda|$ is the number of boxes in the Young diagram of $\lambda$.

Of importance to our argument in Section \ref{S:trace} is the multiplicity of the eigenvalue $\phi^{\lambda}_{\mu}$ occurring on the $\lambda$-eigenspace, which we denote as $f^{2\lambda}$. Note that, if $\tau=\phi^{\lambda}_{\mu}$ occurs on at least two Specht module, then its multiplicity is the sum of all $f^{2\lambda}$ indexed by the $\tau$-eigenspaces. The multiplicity of $\phi^{\lambda}_{\mu}$ can be obtained by computing the character degrees $\chi^{2\lambda}(1)$. It is well-known that $\chi^{2\lambda}(1)$ is the dimension of the irreducible representation of $S_{2n}$ indexed by $2\lambda$ \cite{Sagan}. Combinatorial formulae for computing the multiplicity of $\phi^{\lambda}_{\mu}$ can be obtained by applying the hook rule defined below. The \emph{hook length} of cell $(i,j)$ in  the Young tableau of partition $\lambda$, denoted $h_{2\lambda}(i,j)$, is the total number of boxes to the right and in the same row as the $(i,j)$-cell, and below and in the same column as the $(i,j)$-cell plus one. 

\begin{lemma}\label{lem:mul}
Let $2\lambda \vdash 2n$ and let $H(2\lambda)$ be the product of the $h_{2\lambda}(i,j)$ as $(i,j)$ runs over all $2n$ cells of $2\lambda$. Then 

\begin{equation} 
f^{2\lambda}=\chi^{2\lambda}(1) = \frac{(2n)!}{H(2\lambda)}. 
\end{equation}
\end{lemma}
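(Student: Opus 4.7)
The plan is to recognize this as a standard hook-length-formula statement and split the argument into two independent pieces, corresponding to the two equalities in the displayed equation.

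First, I would establish $f^{2\lambda}=\chi^{2\lambda}(1)$. This is essentially a bookkeeping step using material already developed in Section~\ref{S:term}. Recall from the decomposition $1{\uparrow}^{S_{2n}}_{H_n}=\oplus_{\lambda \vdash n}S^{2\lambda}$ that each Specht module $S^{2\lambda}$ appears with multiplicity one in the permutation representation on $\mathcal{M}_{2n}$, and that the common eigenspaces of the scheme are precisely these Specht modules. Since the $2\lambda$-isotypic component has dimension equal to $\dim S^{2\lambda}=\chi^{2\lambda}(1)$, this is the dimension of the $2\lambda$-eigenspace, which by definition is $f^{2\lambda}$.

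Second, I would prove $\chi^{2\lambda}(1)=(2n)!/H(2\lambda)$. This is the classical Frame--Robinson--Thrall hook length formula applied to the partition $2\lambda \vdash 2n$. Rather than reproducing the proof, I would cite \cite[Theorem 3.10.2]{Sagan} (or whichever hook-length-formula reference is used in the paper's citation style), since this is textbook material and the lemma is stated only to fix notation for use in Section~\ref{S:trace}.

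There is no serious obstacle here: both equalities follow immediately from results already in the literature and from the decomposition of $1{\uparrow}^{S_{2n}}_{H_n}$ which has been recalled above. The only care needed is to make clear the distinction between the \emph{multiplicity of the eigenvalue} $\phi^\lambda_\mu$ (a priori potentially the sum of several $f^{2\lambda}$ over partitions giving the same eigenvalue) and the \emph{dimension of the $2\lambda$-eigenspace} $f^{2\lambda}$ itself; the lemma as stated concerns the latter, matching the remark immediately preceding its statement.
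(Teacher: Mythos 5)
Your proposal is correct and matches the paper's treatment: the paper gives no proof of this lemma, simply recording that $f^{2\lambda}=\chi^{2\lambda}(1)$ follows from the multiplicity-free decomposition of $1{\uparrow}^{S_{2n}}_{H_n}$ recalled earlier and that the second equality is the classical hook length formula cited to \cite{Sagan}. Your added remark distinguishing the dimension $f^{2\lambda}$ of a single Specht module from the multiplicity of an eigenvalue that may occur on several modules is exactly the caveat the paper itself makes just before the lemma.
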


The formulae given in the statement of Lemma \ref{lem:mul} is known as the {\it hook length formula} for $\chi^{2\lambda}(1)$.

There is also a Frobenius character formula for $\chi^{2\lambda}(1)$. Let $2\lambda = (2\lambda_1,2\lambda_2,\dots,2\lambda_k)$ and $\ell_i = 2\lambda_i + k - i$ for $i=1,\dots,k$.  Note that $\lambda_1+k-1=\ell_1 > \ell_2 > \cdots > \ell_k = 2\lambda_k$ because these are the hook lengths corresponding to the first cell of each row in the Young tableau of shape $2\lambda$.  Consequently, the Frobenius' character formula is  

\begin{equation} 
\chi^{2\lambda}(1) = \frac{(n!)}{\ell_1! \ell_2! \cdots \ell_k !} \prod_{i<j} (\ell_i - \ell_j). 
\end{equation}

In  Section \ref{S:trace}, our proof of Conjecture \ref{conj:main} for certain matrices relies on the fact that the multiplicity of most eigenvalues of these matrices is at least $\binom{2n}{3} - \binom{2n}{2}$. This lower bound follows from the fact that the dimensions of the Specht modules of $S_{2n}$ indexed by partitions of $2n$ grows quickly as we move away from extremal partitions in the dominance ordering. Therefore, few of the  irreducible characters have small degree. In fact, Meagher et.~al \cite{setpart} have characterized all irreducible representations of $S_{2n}$ with dimension less than $\binom{2n}{3} - \binom{2n}{2}$ in Lemma \ref{lem:tenspecial} below. 

\begin{lemma}\label{lem:tenspecial} \cite{setpart}
  For $n \geq 7$, if $S^{\lambda'}$ is an irreducible representation of $S_{2n}$ with
dimension less than $\binom{2n}{3} - \binom{2n}{2}$, then $\lambda'$ is one of the following partitions of $2n$:
\begin{align*}
 & [2n],  \quad [1^{2n}],  \quad [2n-1,1], \quad [2,1^{n-2}],  \quad   [2n-2,2],  \quad  [2,2,1^{2n-4}], & \\
 & [2n-2,1,1],    \quad  [3,1^{2n-3}], \quad [2n-3,3],  \quad  [2,2,2,1^{2n-6}]. &
\end{align*}
\end{lemma}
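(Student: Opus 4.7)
My plan is to use the hook length formula (Lemma \ref{lem:mul}) together with the conjugate symmetry $\dim S^{\lambda'}=\dim S^{(\lambda')^T}$, and to proceed by a case analysis on the first part $\lambda_1'$. First I would observe that the ten listed partitions form five conjugate pairs (for example, $[2n-3,3]$ is conjugate to $[2,2,2,1^{2n-6}]$), so by the symmetry it suffices to classify the partitions $\lambda'$ with $\lambda_1' \geqslant (\lambda')^T_1$. I would also record that the threshold $\binom{2n}{3}-\binom{2n}{2}$ is exactly $\dim S^{[2n-3,3]}$, via the classical two-row formula $\dim S^{[m-k,k]}=\binom{m}{k}-\binom{m}{k-1}$.

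Next I would dispose of the candidates with $\lambda_1' \in \{2n,\,2n-1,\,2n-2,\,2n-3\}$ directly. For $\lambda_1' \geqslant 2n-2$ there are only four shapes, namely $[2n]$, $[2n-1,1]$, $[2n-2,2]$, and $[2n-2,1,1]$, each appearing in the list, with dimensions easily read off from the hook product. For $\lambda_1'=2n-3$, the remaining three boxes form one of $[3]$, $[2,1]$, $[1^3]$; the first is the threshold partition, while a direct hook-length computation gives $\dim S^{[2n-3,2,1]}=\tfrac{(2n)(2n-2)(2n-4)}{3}$ and $\dim S^{[2n-3,1^3]}=\binom{2n-1}{3}$, both of which strictly exceed $\tfrac{(2n)(2n-1)(2n-5)}{6}$ for $n\geqslant 7$ by a short polynomial check.

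The remaining, and principal, case is $\lambda_1' \leqslant 2n-4$ together with $(\lambda')^T_1 \leqslant 2n-4$ (the second bound coming from the symmetry reduction). Here I would aim for the uniform lower bound $\dim S^{\lambda'} \geqslant \binom{2n}{4}-\binom{2n}{3}$ rather than enumerate shapes one at a time. A clean route uses the Frobenius determinantal formula recalled in the excerpt: writing $\ell_i = 2\lambda_i' + k - i$, the constraints $\lambda_1' \leqslant 2n-4$ and $k=(\lambda')^T_1 \leqslant 2n-4$ control the $\ell_i$ enough that the product $\prod_{i<j}(\ell_i-\ell_j)$ dominates the denominator $\ell_1!\cdots \ell_k!$ to the required extent. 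The final polynomial inequality $\binom{2n}{4}-\binom{2n}{3}>\binom{2n}{3}-\binom{2n}{2}$ simplifies to $(2n-2)(2n-11)+12>0$ and holds for all $n\geqslant 7$.

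The main obstacle is the uniform lower bound in the last case, since $\dim S^{\lambda'}$ is not monotone in the dominance order (for example $\dim S^{[2,1]}=2$ while $\dim S^{[1^3]}=1$), so naive box-moving arguments cannot work. I expect the cleanest implementation to require a short auxiliary lemma comparing the hook-length product of an arbitrary $\lambda'$ satisfying the two-sided bound to that of $[2n-4,4]$, possibly by isolating a small ``core'' of $\lambda'$ outside of which the hook lengths are bounded cell-by-cell against those of the reference two-row shape. Packaging this without descending into a lengthy sub-case enumeration is the step that will need the most care.
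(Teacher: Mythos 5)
First, note that the paper does not prove this lemma: it is imported verbatim from \cite{setpart} (Meagher--Shirazi--Stevens), so there is no in-paper argument to compare yours against. Judged on its own terms, your skeleton is the standard one for low-degree classifications: reduce by conjugate symmetry (the list is closed under transposition), dispose of $\lambda_1'\geq 2n-3$ by direct hook-length computation, and then establish a uniform lower bound $\dim S^{\lambda'}\geq\binom{2n}{4}-\binom{2n}{3}=\dim S^{[2n-4,4]}$ when both the first row and first column of $\lambda'$ are at most $2n-4$. Your boundary computations check out: $\binom{2n}{3}-\binom{2n}{2}=\frac{2n(2n-1)(2n-5)}{6}$ is indeed $\dim S^{[2n-3,3]}$, the values $\dim S^{[2n-3,2,1]}=\frac{2n(2n-2)(2n-4)}{3}$ and $\dim S^{[2n-3,1^3]}=\binom{2n-1}{3}$ both exceed the threshold, and your closing inequality $(2n-2)(2n-11)+12>0$ is the correct simplification of $\binom{2n}{4}-\binom{2n}{3}>\binom{2n}{3}-\binom{2n}{2}$.

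The genuine gap is exactly the step you flag yourself: the uniform lower bound in the case $\lambda_1'\leq 2n-4$ and $(\lambda')^T_1\leq 2n-4$. This is the entire content of the lemma (it is essentially Rasala's theorem on the minimal degrees of $S_m$), and your proposal only asserts that in the Frobenius formula ``the product $\prod_{i<j}(\ell_i-\ell_j)$ dominates the denominator to the required extent.'' As written this is not an argument, and you correctly identify that dominance-order monotonicity fails, so no soft comparison will do. A direct cell-by-cell comparison of hook products against $[2n-4,4]$ is delicate for shapes with many short rows. The clean, fully rigorous route here is induction on $m=2n$ via the branching rule $\dim S^{\lambda'}=\sum_{\Box}\dim S^{\lambda'-\Box}$: if $\lambda'$ has first row and first column at most $m-4$, then after removing a corner box at least one (typically two) of the resulting partitions of $m-1$ still satisfies the two-sided bound with $m-1$ in place of $m$, and summing the inductive lower bounds beats $\binom{m}{4}-\binom{m}{3}$ after a finite base-case check. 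Until that (or an equivalent quantitative lemma) is supplied, the proposal is a correct plan with its central step missing rather than a proof.
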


In the context of the perfect matching association scheme, since eigenspaces corresponds to the even partitions, Lemma \ref{lem:tenspecial} implies the following corollary.

\begin{coro} \cite{setpart}
\label{cor:dim}
Let $n \geqslant 7$. The two eigenspaces of the perfect matching association scheme with dimension less than $\binom{2n}{3} - \binom{2n}{2}$ are indexed by partitions $\lambda \in \{ [n], [n-1,1]\}$.  
\end{coro} 

\subsection{Quotient graphs}

We conclude this section by discussing a particular method for obtaining eigenvalues of a graph $X$. In this approach, we will construct a  directed multi-graph of order much smaller than that of $X$ whose eigenvalues are also eigenvalues of $X$. In order to do so, we first construct an equitable partition of $X$.  An \emph{equitable partition} of $X$, denoted $\pi=[C_1, C_2, \ldots, C_t]$, is a partition of $V(X)$ into $t$ sets: $C_1, C_2, \ldots, C_t$. For each pair $C_i$ and $C_j$, where $i$ and $j$ need not be distinct, each vertex of $C_i$ is adjacent to exactly $a_{i,j}$ vertices of $C_j$. Using an equitable partition, we can construct a directed multigraph known as the \emph{quotient graph} of $X$ relative to $\pi$, denoted $X / \pi$, as follows. The vertices of our directed graph are the parts of the equitable partition $\pi$. There are then precisely $a_{i,j}$ directed edges from $C_i$ to $C_j$. Note that this multigraph may contain loops and may not be symmetric. 

As per Lemma \ref{lem:quotient}, stated below, the eigenvalue of $X / \pi$ are eigenvalues of $X$. Generally, the order of $X / \pi$ is small and thus, it is computationally feasible to find its eigenvalues. Lemma \ref{lem:quotient} is a well-known result in algebraic graph theory and is instrumental in Section \ref{S:gap} in which we compute the spectral gaps of some of the graphs in our scheme. See Section 2.2 of \cite{GodMeaBook} for a proof of Lemma \ref{lem:quotient}. 

\begin{lemma} \label{lem:quotient}
If $\pi$ is an equitable partition of $X$, then eigenvalues of $X / \pi$ are eigenvalues of $X$. 
\end{lemma}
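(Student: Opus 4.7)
The plan is to use the standard \emph{lifting} argument based on the characteristic matrix of the partition. Let $A$ denote the adjacency matrix of $X$ and let $\pi=[C_1,C_2,\ldots,C_t]$. I would first introduce the $|V(X)|\times t$ characteristic matrix $P$ of $\pi$, defined by $P_{v,i}=1$ if $v\in C_i$ and $P_{v,i}=0$ otherwise. Since the parts are disjoint and nonempty, the columns of $P$ are nonzero and pairwise orthogonal, so $P$ has full column rank $t$. I would also let $B$ be the $t\times t$ matrix with $B_{i,j}=a_{i,j}$, which by the definition of an equitable partition is exactly the adjacency matrix of the quotient multigraph $X/\pi$.

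The key step is to establish the commutation identity
\[
AP = PB.
\]
For a vertex $v\in C_i$ and a part $C_j$, the entry $(AP)_{v,j}$ counts the neighbours of $v$ lying in $C_j$, which equals $a_{i,j}$ precisely because $\pi$ is equitable. On the other hand, $(PB)_{v,j}=\sum_k P_{v,k}B_{k,j}=B_{i,j}=a_{i,j}$. Matching these two expressions gives the identity. This is the crux of the argument, and the only place where the hypothesis that $\pi$ is equitable is used; without it, the count $(AP)_{v,j}$ would depend on $v$ and not just on the part containing $v$.

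Once $AP=PB$ is in hand, I would finish by lifting eigenvectors. If $\mathbf{y}\in\mathbb{C}^t$ satisfies $B\mathbf{y}=\lambda\mathbf{y}$, then
\[
A(P\mathbf{y}) = (AP)\mathbf{y} = (PB)\mathbf{y} = P(\lambda \mathbf{y}) = \lambda (P\mathbf{y}),
\]
and $P\mathbf{y}\neq 0$ because $P$ has full column rank. Hence every eigenvalue of $B$ (that is, of $X/\pi$) is an eigenvalue of $A$ (that is, of $X$).

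The main obstacle, if any, is conceptual rather than technical: one must be careful that $X/\pi$ is a directed multigraph whose adjacency matrix $B$ need not be symmetric and may fail to be diagonalizable, so the argument must lift \emph{each} eigenvalue of $B$ (not a basis of eigenvectors) to an eigenvalue of $A$. The argument above does this, since a single eigenvector of $B$ suffices to produce an eigenvector of $A$ via $P$. The computation $(AP)_{v,j}=a_{i,j}$ is routine, but it is the pivotal point where the equitable hypothesis is invoked.
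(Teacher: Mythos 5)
Your proof is correct and follows essentially the same route as the argument the paper defers to (Godsil--Meagher, Section~2.2): the characteristic matrix $P$ of $\pi$, the intertwining relation $AP=PB$, and the lifting of an eigenvector of $B$ to one of $A$ via injectivity of $P$. Your remark that one must lift each eigenvalue individually (since $B$ need not be symmetric or diagonalizable) is a correct and worthwhile precaution.
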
 

\section{Proof of Conjecture \ref{conj:main} for \texorpdfstring{$n$}{n} sufficiently large}\label{S:trace}

In this section, we confirm Conjecture \ref{conj:main} when $n$ is sufficiently large relative to a particular parameter defined in the hypothesis of Theorem \ref{thm:maintrace} stated below. In fact, we show that the second largest eigenvalue in absolute value occurs on the $[n-1,1]$-eigenspace for several graphs in the perfect matching association scheme. As per Lemma \ref{lem:foreig}, the eigenvalue of $X_\mu$ of the $[n-1,1]$-eigenspace is always positive when $\mu$ contains at least one part of size 1. Therefore, if the second largest eigenvalue in absolute value occurs on the $[n-1,1]$-eigenspace, then this eigenvalue is also the second largest for the applicable partitions $\mu$.   

 We prove  this section's main result, Theorem \ref{thm:maintrace}, by assuming that there exists a $\lambda \vdash n$ such that $|\phi^{\lambda}_{\mu}| >| \phi^{[n-1,1]}_{\mu}|$. Then, we show that the dimension of the corresponding eigenspace must be strictly less than $\binom{2n}{3} - \binom{2n}{2}$ when $n$ is sufficiently large, thereby contradicting Corollary \ref{cor:dim}. 

\begin{theorem} \label{thm:maintrace}
Let $\mu = [n-k, \mu']$ with $\mu' \vdash k$. If $n$ is sufficiently large relative to $k$, then $\phi^{[n-1,1]}_\mu$ is the second largest eigenvalue of $X_{\mu}$ in absolute value.
\end{theorem}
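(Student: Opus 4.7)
The strategy is a contradiction argument combining a trace computation with Corollary \ref{cor:dim}. Since $A_\mu$ is the adjacency matrix of a $v_\mu$-regular graph on $(2n-1)!!$ vertices, we have $\tr(A_\mu^2) = (2n-1)!! \cdot v_\mu$, while the spectral decomposition gives $\tr(A_\mu^2) = \sum_{\lambda \vdash n} f^{2\lambda}(\phi_\mu^\lambda)^2$. Suppose for contradiction that there exists $\lambda \notin \{[n],[n-1,1]\}$ with $|\phi_\mu^\lambda| \geq |\phi_\mu^{[n-1,1]}|$. Isolating the $\lambda$-term in the spectral decomposition and bounding it by the full trace yields
$$f^{2\lambda} \;\leq\; \frac{(2n-1)!! \, v_\mu}{(\phi_\mu^\lambda)^2} \;\leq\; \frac{(2n-1)!! \, v_\mu}{(\phi_\mu^{[n-1,1]})^2}.$$

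Next I would invoke Lemma \ref{lem:foreig} to bound $(\phi_\mu^{[n-1,1]})^2$ from below. A short case check on $r_1(\mu) \in \{0,1,2,\ldots,k\}$ gives $|(2n-1)r_1(\mu) - n| \geq n-1$ in every case, hence $(\phi_\mu^{[n-1,1]})^2 \geq v_\mu^2 /(4n^2)$. Substituting yields $f^{2\lambda} \leq 4n^2 (2n-1)!! / v_\mu$. The next step is an asymptotic analysis of the valency using Lemma \ref{lem:degree}: for $n$ large enough that $n-k$ does not coincide with any part of $\mu'$, the part $n-k$ has multiplicity one in $\mu$, so
$$v_\mu \;=\; \frac{2^n \, n!}{2(n-k)\, D(\mu')},$$
where $D(\mu') > 0$ is a constant depending only on $\mu'$. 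A standard Stirling estimate combined with $(2n-1)!! = (2n)!/(2^n n!)$ and $\binom{2n}{n} \sim 4^n/\sqrt{\pi n}$ then delivers $v_\mu \geq c(\mu')\,(2n-1)!!/\sqrt{n}$ for some constant $c(\mu') > 0$.

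Putting these bounds together gives $f^{2\lambda} \leq (4/c(\mu')) \, n^{5/2}$, whereas Corollary \ref{cor:dim} forces $f^{2\lambda} \geq \binom{2n}{3} - \binom{2n}{2} \sim \tfrac{4}{3} n^3$. This is a contradiction once $n$ exceeds a threshold depending only on $c(\mu')$, and hence only on $k$. The case $\lambda = [n]$ is excluded because $\phi_\mu^{[n]} = v_\mu$ strictly exceeds $|\phi_\mu^{[n-1,1]}| = v_\mu \cdot |(2n-1)r_1(\mu) - n|/(2n(n-1))$ for $n$ large relative to $k$, again by Lemma \ref{lem:foreig}. Therefore $\phi_\mu^{[n-1,1]}$ is the second largest eigenvalue of $X_\mu$ in absolute value.

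The principal obstacle is executing the Stirling comparison between $v_\mu$ and $(2n-1)!!$ cleanly enough to obtain the bound $v_\mu \geq c(\mu')(2n-1)!!/\sqrt{n}$ with an explicit positive constant depending only on $\mu'$; the $\sqrt{n}$ gap between $n^{5/2}$ and $n^3$ is precisely what powers the contradiction, and any coarser estimate would be insufficient. A secondary bookkeeping nuisance is ruling out the possibility that $n-k$ coincides with a part of $\mu'$, which would perturb the valency formula; this occurs only in a finite range of $n$ which is absorbed by the ``$n$ sufficiently large'' hypothesis.
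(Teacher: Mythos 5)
Your proposal is correct and follows essentially the same route as the paper: both compare $\tr(A_\mu^2) = (2n-1)!!\,v_\mu$ with the spectral decomposition, use Lemma \ref{lem:foreig} and the bound $|(2n-1)r_1(\mu)-n|\geq n-1$ to get $f^{2\lambda} = O(n^{5/2})$ for any hypothetical eigenspace violating the conclusion, and contradict the lower bound $\binom{2n}{3}-\binom{2n}{2}\sim \tfrac43 n^3$ from Corollary \ref{cor:dim}. The Stirling comparison you flag as the principal obstacle is exactly the paper's elementary Lemma \ref{lem:sqrt}, namely $(2n-1)!!/(2n)!! < 1/\sqrt{n+1}$, proved by a short induction.
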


Our proof of Theorem \ref{thm:maintrace} will use the trace of $A^2_{\mu}$, denoted $tr(A^2_{\mu})$. Our derivations hinge on two crucial facts. First,  since $A_{\mu}$ is the adjacency matrix of a $v_{\mu}$-regular graph on $(2n-1)!!$ vertices, it follows that $tr(A^2_{\mu})=v_{\mu}((2n-1)!!)$. Secondly, it is well-known that, for each $\lambda \vdash n$, $(\phi^{\lambda}_{\mu})^2$ is an eigenvalue of $A^2_{\mu}$ occurring with multiplicity $f^{2\lambda}$. Since the trace of a matrix is the sum of its eigenvalues, it follows that $tr(A^2_{\mu})=\sum_{\lambda} (\phi^{\lambda}_{\mu})^2 f^{2\lambda}$. These observations jointly give rise to the following equation:

\begin{equation}\label{tracequality}
v_{\mu} (2n-1)!!= \sum_{\lambda \vdash n} (\phi^{\lambda}_{\mu})^2 f^{2\lambda}.
\end{equation}

Now, if we let $n \ge 8$, $\xi$ be the largest eigenvalue in absolute value in $\{\phi_\mu^\lambda \ | \ \lambda \neq [n], [n-1,1]\}$, and $f^{2\lambda}$ be the dimension of the corresponding  $\xi$-eigenspace, then 

$$\begin{array}{rcl} 
v_{\mu} (2n-1)!! &=& \sum_{\lambda} (\phi^{\lambda}_{\mu})^2 f^{2\lambda} \\
                         &=& v_{\mu}^2 + (\phi^{[n-1,1]}_{\mu})^2 f^{[2n-2,2]} + \displaystyle{\sum_{\lambda \ne [n],[n-1,1]}} (\phi^{\lambda}_{\mu})^2 f^{2\lambda} \\
&\ge &  v_{\mu}^2 +  (\phi^{[n-1,1]}_{\mu})^2 f^{[2n-2,2]} + \xi^2 f^{2\lambda}.
\end{array}$$ 

\noindent Consequently, we have the following inequality:

\begin{equation}\label{eq:deg2}
 v_{\mu} (2n-1)!! -  v_{\mu}^2 - (\phi^{[n-1,1]}_{\mu})^2 f^{[2n-2,2]} \ge \xi^2 f^{2\lambda}. 
\end{equation}

If there exists a $\lambda \vdash n$ such that  $\lambda \not \in \{[n], [n-1, 1]\}$ and  $|\xi|=|\phi^{\lambda}_{\mu}| >| \phi^{[n-1,1]}_{\mu}|$, then Equation \ref{eq:deg2} can be used to derive an upper-bound on $f^{2\lambda}$. Namely, Equation \ref{eq:deg2} implies that, if an eigenvalue of $X_\mu$ is large enough, then its multiplicity must be small. However, Corollary \ref{cor:dim} gives a lower bound on the dimension of eigenspaces not indexed by $[n]$ and $[n-1,1]$. Therefore, we will obtain a contradiction by using Equation \ref{eq:deg2}  to derive an upper-bound on $f^{2\lambda}$ that is strictly less than the lower-bound of Corollary \ref{cor:dim} under certain conditions.  We derive this upper-bound in Lemma \ref{lem:degbou}. To prove Lemma \ref{lem:degbou}, we will require Lemma \ref{lem:sqrt} stated below. 

\begin{lemma}\label{lem:sqrt}
If $n \geq 2$, then $\frac{ (2n-1)!!}{(2n)!!} < \frac{1}{\sqrt{n+1}}$.
\end{lemma}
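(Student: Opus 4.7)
The plan is to reduce the inequality to the equivalent squared form $(n+1)\,a_n^2 < 1$, where $a_n := \frac{(2n-1)!!}{(2n)!!}$, since both sides of the stated inequality are positive. The clean recursion $a_n = \frac{2n-1}{2n}\,a_{n-1}$ then makes induction on $n$ the natural tool: the factor $\frac{2n-1}{2n}$ is small enough that the multiplicative accumulation overwhelms the linear growth of $n+1$.

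First I would establish the base case $n = 2$ by direct calculation: $a_2 = 3/8$, so $(n+1)\,a_n^2 = 3 \cdot \frac{9}{64} = \frac{27}{64} < 1$. (In fact the base case $n = 1$ also works: $(1+1)(1/2)^2 = 1/2 < 1$, and the inductive step below is valid for $n \geq 2$, so one can start induction as early as one likes.)

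For the inductive step, assume $n\,a_{n-1}^2 < 1$ and compute
$$
(n+1)\,a_n^2 \;=\; (n+1)\left(\frac{2n-1}{2n}\right)^2 a_{n-1}^2 \;=\; \frac{(2n-1)^2(n+1)}{4n^3}\cdot n\,a_{n-1}^2.
$$
By the inductive hypothesis, $n\,a_{n-1}^2 < 1$, so it suffices to show that $(2n-1)^2(n+1) \leq 4n^3$. Expanding, $(2n-1)^2(n+1) = (4n^2 - 4n + 1)(n+1) = 4n^3 - 3n + 1$, and $4n^3 - 3n + 1 < 4n^3$ for all $n \geq 1$. This closes the induction and yields $(n+1)\,a_n^2 < 1$ for every $n \geq 2$; taking square roots gives the lemma.

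There is no real obstacle here — the argument is one tidy induction whose only nontrivial computation is the polynomial expansion $(2n-1)^2(n+1) = 4n^3 - 3n + 1$. The only subtlety worth flagging is the choice to square the inequality first: working directly with $\sqrt{n+1}$ would force a comparison involving $\sqrt{n+1}/\sqrt{n}$ in the inductive step, which is considerably messier than the exact polynomial comparison above.
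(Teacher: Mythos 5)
Your proof is correct and takes essentially the same route as the paper: both are inductions on $n$ whose inductive step reduces, after squaring, to the same cubic polynomial comparison (the paper's $(2n+1)^2(n+2) < (n+1)(2n+2)^2$ is your $(2n-1)^2(n+1) < 4n^3$ up to an index shift). The only cosmetic difference is that you square the invariant globally while the paper squares only inside the step.
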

\begin{proof}
We prove the statement by induction. Our base case is $n=2$ which is straightforward to verify. We then assume that $\frac{ (2n-1)!!}{(2n)!!} < \frac{1}{\sqrt{n+1}}$.

It is not too difficult to see that
$$
\frac{ (2(n+1)-1)!!}{(2(n+1))!!} = \left(\frac{2n+1}{2n+2}\right) \frac{ (2n-1)!!}{(2n)!!} < \left(\frac{2n+1}{2n+2}\right)\frac{1}{\sqrt{n+1}}.
$$
Therefore, it suffices to show that
$$
\left(\frac{2n+1}{2n+2}\right) \frac{1}{\sqrt{n+1}} <  \frac{1}{\sqrt{n+2}},
$$
or equivalently, that
\begin{equation}\label{eq:bou3}
(2n+1)^2 (n+2) < (n+1)(2n+2)^2.
\end{equation}

\noindent If we expand both side of the Inequality \ref{eq:bou3}, we obtain
$$
4n^3 + 12n^2 + 9n+2  < 4n^3 + 12n^2+12n +4.
$$
This last inequality is true for all $n\geq 1$, and the statement follows. \end{proof}

\begin{lemma} \label{lem:degbou}
Let $\mu \vdash n$ and $\lambda \vdash n$ such that $\lambda \not \in \{[n], [n-1,1]\}$. If $ | \phi_\mu^\lambda | \geq | \phi_\mu^{[n-1,1]} |$, then 
$$ f^{2\lambda} \leqslant 4 n^{3/2}    \prod_i m_i! (2 \mu_i)^{m_i}.$$
\end{lemma}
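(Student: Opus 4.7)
The plan is to isolate the contribution of $\lambda$ in the trace identity (\ref{tracequality}). Since every term on the right-hand side of (\ref{tracequality}) is non-negative, discarding all summands except the one indexed by $\lambda$ yields
$$(\phi^\lambda_\mu)^2 f^{2\lambda} \leqslant v_\mu (2n-1)!!.$$
By the hypothesis $|\phi^\lambda_\mu| \geqslant |\phi^{[n-1,1]}_\mu|$, we may replace $(\phi^\lambda_\mu)^2$ in the denominator by $(\phi^{[n-1,1]}_\mu)^2$ to obtain
$$f^{2\lambda} \leqslant \frac{v_\mu (2n-1)!!}{(\phi^{[n-1,1]}_\mu)^2}.$$
(Equivalently, one could start from (\ref{eq:deg2}) and drop the non-negative terms $v_\mu^2$ and $(\phi^{[n-1,1]}_\mu)^2 f^{[2n-2,2]}$.)

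Next, I would substitute the closed form from Lemma \ref{lem:foreig} to express $(\phi^{[n-1,1]}_\mu)^2$ in terms of $v_\mu$ and $r_1(\mu)$, yielding
$$f^{2\lambda} \leqslant \frac{(2n(n-1))^2\,(2n-1)!!}{v_\mu\,((2n-1)r_1(\mu)-n)^2}.$$
From Lemma \ref{lem:degree}, one sees that $v_\mu \prod_i m_i!(2\mu_i)^{m_i} = 2^n n! = (2n)!!$. Using this identity to replace $1/v_\mu$, and then Lemma \ref{lem:sqrt} to bound $(2n-1)!!/(2n)!! < 1/\sqrt{n+1}$, the bound rewrites as
$$f^{2\lambda} < \frac{4n^2(n-1)^2}{\sqrt{n+1}\,((2n-1)r_1(\mu)-n)^2}\prod_i m_i!(2\mu_i)^{m_i}.$$
It then suffices to show that the coefficient in front of $\prod_i m_i!(2\mu_i)^{m_i}$ is at most $4n^{3/2}$, which is equivalent to
$$(n-1)^2 \sqrt{n} \;\leqslant\; \sqrt{n+1}\cdot\bigl((2n-1)r_1(\mu)-n\bigr)^2.$$

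The final step is a short case analysis on $r_1(\mu)$. For $r_1(\mu)=0$, the right-hand side equals $n^2\sqrt{n+1}$, which easily dominates $(n-1)^2\sqrt{n}$; for $r_1(\mu)\geqslant 2$, the denominator on the left satisfies $(2n-1)r_1(\mu)-n\geqslant 3n-2$, so the inequality is very slack. The genuine obstacle---and the reason the bound is stated with the constant $4n^{3/2}$---is the borderline case $r_1(\mu)=1$, in which $(2n-1)r_1(\mu)-n = n-1$ and the required inequality collapses to $\sqrt{n} \leqslant \sqrt{n+1}$. This is where the strict inequality provided by Lemma \ref{lem:sqrt} is essential: without the factor $1/\sqrt{n+1}$ the argument would fail by exactly one power of $n^{1/2}$, but Lemma \ref{lem:sqrt} supplies precisely the slack needed to close the bound cleanly.
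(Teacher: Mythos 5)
Your proposal is correct and follows essentially the same route as the paper's proof: bound $f^{2\lambda}$ by $v_\mu(2n-1)!!/(\phi_\mu^{[n-1,1]})^2$, substitute the closed forms from Lemmas \ref{lem:degree} and \ref{lem:foreig}, invoke Lemma \ref{lem:sqrt}, and finish with the observation that $((2n-1)r_1-n)^2\geqslant (n-1)^2$ for every value of $r_1$. Your explicit case analysis on $r_1(\mu)$ just spells out that last inequality, which the paper asserts in one line.
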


\begin{proof}
By removing the negative terms on the left-hand side of Inequality \ref{eq:deg2} and letting $\phi_\mu^\lambda = \phi_\mu^{[n-1,1]}$, we obtain the following inequality
$$
f^{2\lambda} \leq \frac{ v_\mu (2n-1)!! }{\xi^2}\leq \frac{ v_\mu (2n-1)!! }{ ( \phi_\mu^{[n-1,1]})^2}. 
$$

\noindent Using Lemma \ref{lem:degree} and Lemma \ref{lem:foreig}, which give explicit formulas for $v_\mu$ and for $\phi_\mu^{[n-1,1]}$, respectively, we obtain
 
\begin{align*}
\frac{ v_\mu (2n-1)!! }{ ( \phi_\mu^{[n-1,1]} )^2 }
= \frac{   (2n-1)!!  \ 4n^2(n-1)^2  }{ v_\mu \left( (2n-1) r_1 - n \right)^2 } 
= \frac{   (2n-1)!!  \ 4n^2(n-1)^2  \ \prod_i m_i! (2 \mu_i)^{m_i} }{ (2n)!! \left( (2n-1) r_1 - n \right)^2 }.
\end{align*}
Let $r_1$ denote the number of parts of size 1 in $\mu$. Lemma \ref{lem:sqrt}, in conjunction with the fact that $\left( (2n-1) r_1 - n \right)^2 \geq (n-1)^2$ for any value of $r_1$, implies that

\begin{align*}
\frac{ (2n-1)!! \ 4n^2(n-1)^2  \ \prod_i m_i! (2 \mu_i)^{m_i} }{ (2n)!! \left( (2n-1) r_1 - n \right)^2 }
\leq 
\frac{  4 n^2  \ \prod_i m_i! (2 \mu_i)^{m_i} }{ \sqrt{n}}.
\end{align*}

\noindent Thus, if there is a $\lambda$ with $ | \phi_\mu^\lambda | \geq | \phi_\mu^{[n-1,1]} |$, then 
$f^{2\lambda} \leqslant 4 n^{3/2}  \prod_i m_i! (2 \mu_i)^{m_i}$.
\end{proof}

The crux is to now show that, under certain circumstances, the upper-bound on $f^{2\lambda}$ is in fact smaller than the lower bound given in Corollary \ref{cor:dim}. Before we do so, we will require the following proposition which bounds the degree of all graphs in our scheme. 

\begin{prop}\label{propBigDegree}
Let $n\geq 2$. Then the maximum value of $v_\mu $ is $(2n-2)!!$ and occurs for $\mu = [n]$. Further, the minimum value of $v_{\mu}$ is $1$ and occurs for $\mu = [1^{n}]$.
\end{prop}

\begin{proof}
Let $\mu= [\mu_1, \mu_2,\ldots, \mu_t] \vdash n$. Since $2\mu_1+2\mu_2+\cdots + 2\mu_t =2n$, $\prod_{i=1}^t 2\mu_i \geqslant 2n$. Therefore, we have that $\prod_i m_i! (2 \mu_i)^{m_i} \geqslant 2n$ and it follows from Lemma \ref{lem:degree} that
$$
v_\mu = \frac{2^n n!}{\prod_i m_i! (2 \mu_i)^{m_i}} \leq \frac{2^n n!}{2n} = (2n-2)!!.
$$
Moreover, by Lemma \ref{lem:degree} one can see that $v_\mu=(2n-2)!!$ if and only if $\mu=[n]$. The statement about the minimum value of $v_{\mu}$ being $1$ is clear and $A_{[1^{n}]}$ is the identity matrix which attains it.
\end{proof}

\begin{proof}[Proof of Theorem \ref{thm:maintrace}]
Let $\mu = [n-k,\mu']$ with $\mu'\vdash k$. Suppose that there exists some $\lambda \vdash n$ such that a $\lambda \notin \{[n], [n-1,1]\}$ and $|\phi_{\mu}^{\lambda}|\geq |\phi_{\mu}^{[n-1,1]}|$. Thus, by Lemma \ref{lem:degbou}, we have 
$$
f^{2\lambda}\leq 4 n^{3/2}  \prod_i m_i! (2 \mu_i)^{m_i}.
$$

\noindent Proposition~\ref{propBigDegree} implies that for any $n$ and every partition $\mu \vdash n$, we have
$2n \leq \prod_{i}m_i!(2\mu_i)^{m_i}\leq (2n)!!$. If $n\gg k$, then in particular $n>2k$, and consequently, multiplicity $m_1$ of $n-k$ in $\mu$ has to be $1$. Combining all of these we have
$$
\prod_i m_i! (2 \mu_i)^{m_i}  = 1 (2n-2k)  \prod_{i>1}  m_i! (2 \mu_i)^{m_i} \leq 2(n-k) (2k!!),
$$
and thus, $f^{2\lambda}\leqslant 8 n^{3/2} (n-k) (2k)!! $. However, since $\lambda \not\in \{ [n], [n-1,1]\}$, Corollary \ref{cor:dim} implies that  $f^{2\lambda} \geqslant \binom{2n}{3}  - \binom{2n}{2}$. On the other hand, if $n\gg k$, then
\begin{align} \label{eq:largenk}
\binom{2n}{3}  - \binom{2n}{2} = \frac{n(2n-1)(2n-5)}{3}> 8 n^{3/2} (n-k) (2k)!!,
\end{align}
which is a contradiction.
\end{proof}

When $k=1$, Inequality \ref{eq:largenk} holds for all $n \geqslant 6$. This gives rise to the following corollary by Lemma \ref{lem:degree} and Lemma \ref{lem:foreig}. 

\begin{coro}\label{cor:2n-2}
For all $n \ge 6$, the second largest eigenvalue of $A_{[n-1,1]}$ is 
$$\phi^{[n-1,1]}_{[n-1,1]} = 2^{n-3} (n-2)!.$$ 
\noindent The spectral gap of $A_{[n-1,1]}$ is $2^{n-3}(2n-1)((n-2)!).$
\end{coro}

In summary, we have confirmed Conjecture \ref{conj:main} for partitions $\mu=[\mu_1, \ldots, \mu_t]$ when $\mu_1$ is large enough. In fact, Theorem \ref{thm:maintrace} also implies that the second largest eigenvalue in absolute value of many graphs in the perfect matching association scheme occurs on the $[n-1,1]$-eigenspace. However, it is not always the case that this eigenvalue occurs on the $[n-1, 1]$-eigenspace. For example, there are several graphs in for which this eigenvalue occurs on the $[n-2,2]$-eigenspace (see for example, Table~\ref{tab_for_n=6} and Table~\ref{tab_for_n=7}). We leave it as an open question to determine on which eigenspace of the perfect matching association scheme does the second largest eigenvalue in absolute value occurs. In the next section, we confirm Conjecture \ref{conj:main} for certain graphs indexed by partitions $\mu=[n-k, \mu']$ where $n-k$ is very small.

\section{Proof of Conjecture \ref{conj:main} for certain matrices using symmetric functions }\label{S:sym}

In this section, we confirm Conjecture~\ref{conj:main} for 
$$
\mu \in \bigl\{[2,1^{n-2}],\ [3,1^{n-3}],\ [2,2,1^{n-4}],\ [4,1^{n-4}],\ [3,2,1^{n-5}],\ [5,1^{n-5}]\bigr\}.
$$
To achieve this, we adopt an approach different from that of Section~\ref{S:trace}. 
Specifically, we make use of expressions for $\phi^{\lambda}_{\mu}$ in terms of power-sum symmetric functions obtained in~\cite{Srinivasan}, and derive two additional expressions using the results from~\cite{Srinivasan} together with known character tables. These formulas, combined with an inductive argument, will be used to prove Conjecture~\ref{conj:main}.

As stated in Section~\ref{S:term}, Srinivasan provides a formula for $E_{\mu}$, the character values of $A_{\mu}$ in terms of power-sum symmetric functions. When evaluated on the content of the Young tableau of shape $2\lambda$, this yields $ E_{\mu}\bigl(c(2\lambda)\bigr) = \phi^\lambda_\mu$. For the sake of completeness, we include here some details of how $E_{\mu}$ can be expressed in terms of power-sum symmetric functions following~\cite{Srinivasan}.

Let $\ell(\mu)$ denote the number of parts of a partition $\mu$. Let us define a partial order on the set of all partitions as follows: $\mu \leq \lambda$ if either $|\mu| < |\lambda|$ or $|\mu| = |\lambda|$ and $\mu$ can be obtained from $\lambda$ by partitioning the parts of $\lambda$ into disjoint blocks and summing the parts in each block. Let $\mu$ be a partition with all parts $\geq 2$, and let $\overline{\mu}$ denote the partition obtained from $\mu$ by subtracting $1$ from each part.  It follows from \cite[Theorem~4.3]{Srinivasan} that
$$
E_{\mu} \;=\; \sum_{\lambda \leq \overline{\mu}} b_{\mu}^{\lambda}(t)\, p_{\lambda},
$$
where $b_{\mu}^{\lambda}(t) \in \mathbb{Q}[t]$ and
$\deg (b_{\mu}^{\lambda}(t)) \;\leq\; |\overline{\mu}| - |\lambda| + \ell(\overline{\mu}) - \ell(\lambda)$. By using the above result Srinivasan provided the formulae for the first four symmetric functions in the Table \ref{Tab:Formulae} (see \cite[Example 4.6]{Srinivasan}.) Adding to the work of Srinivasan, we derive formulae for $\mu \in \{[3,2, 1^{n-5}], [5,1^{n-5}]\}$. 

\begin{table} [htpb]
\begin{center}
\begin{tabular}{|l |l|}
\hline
 $A_{\mu}$ & $E_\mu$ \\[2pt] \hline 
 $A_{[2,1^{n-2}]}$ & $ \frac{p_1}{2} - \frac{t}{4}$\\[2pt] \hline
$A_{[3,1^{n-3}]}$, & $\frac{p_2}{2} - p_1 + \frac{3t-t^2}{4}$\\[2pt] \hline
$A_{[2,2,1^{n-4}]}$& $\frac{p_1^2}{8}-\frac{3p_2}{4} + \frac{(10-t)p_1}{8} + \frac{9t^2-24t}{32}$\\[2pt]  \hline
$A_{[4,1^{n-4}]} $ &  $\frac{p_3}{2} - \frac{9p_2}{4} + \frac{(11-2t)p_1}{2} + \frac{8t^2-23t}{8}$\\[2pt]  \hline
$A_{[3,2, 1^{n-5}]}$ &  $-2p_3+\frac{1}{4}p_1p_2+(\frac{60-t}{8})p_2-\frac{1}{2}p_1^2+\frac{29t-120-t^2}{8}p_1+\frac{116t-47t^2+t^3}{16}$\\[2pt]  \hline
$A_{[5,1^{n-5}]}$ &  $\frac{p_4}{2} -4p_3+\frac{40-3t}{2}p_2-p_1^2+(7t-34)p_1+\frac{217t-96t^2+5t^3}{12}$\\[2pt] \hline
\end{tabular}
\vspace{1ex}
\caption{Formulae for the symmetric functions to compute eigenvalues of certain matrices in the perfect matching association scheme}
\label{Tab:Formulae}
\end{center}
\end{table}

We illustrate the process by determining $E_{[3,2,1^{n-5}]}$, while for $E_{[5,1^{n-5}]}$, we leave the details to the reader.\footnote{All computations for both of the symmetric functions can also be found in the SageMath file available on: \url{https://github.com/Himanshugupta23/Perfect-Matching-Association-Scheme}.} As described above, by \cite[Theorem~4.3]{Srinivasan}, the symmetric function  $E_{[3,2,1^{n-5}]}$ lies in the $\mathbb{Q}[t]$-span of the power-sum symmetric functions $p_{3}$, $p_{1}p_{2}$, $p_{2}$, $p_{1}^{2}$, $p_{1}$, and $p_{0}$. To determine the coefficients, we use the columns of the tables of eigenvalues corresponding to the partition $[3,2,1^{n-5}]$ for $5 \leq n \leq 8$, and solve systems of linear equations whose coefficient matrices are formed by the corresponding columns of $p_{3}$, $p_{1}p_{2}$, $p_{2}$, $p_{1}^{2}$, $p_{1}$, and $p_{0}$ for $5 \leq n \leq 8$. Since the degrees of the polynomial coefficients are bounded, we have enough points to apply the Lagrange interpolation to recover these coefficients exactly. 

The details are as follows. For $n=5$, the column corresponding to $[3,2]$ is 
$[160,-20,$ $20,-4,-10,10,-20]^{\top}$ (see Table \ref{tab_for_n=5}). Solving the linear system with respect to power-sum symmetric functions for $2n=10$, we get 
$$E_{[3,2]} = -2p_3 + \frac14 p_1p_2 + \frac{25}{4} p_2 - \frac12 p_1^2 + \frac{35}{4} p_1 - \frac{635}{4} p_0. $$ 
For $n=6$, the column corresponding to $[3,2,1]$ is 
$[960,80,24,-60,120,0,12,-60,0,20,\linebreak -120]^{\top}$ (see Table \ref{tab_for_n=6}),
which can be expressed as 
$$E_{[3,2,1]} = -2p_3 + \frac14 p_1p_2 + 6 p_2 - \frac12 p_1^2 + \frac{21}{2} p_1 - 228 p_0. $$
For $n=7$, the column corresponding to $[3,2,1^{2}]$ is 
$[3360,760,148,-56,228,-42,-78,\linebreak 84,-60,12,52,-60,28,-20,-420]^{\top}$ (see Table \ref{tab_for_n=7}), 
which corresponds to 
$$E_{[3,2,1^{2}]} = -2p_3 + \frac14 p_1p_2 + \frac{23}{4} p_2 - \frac12 p_1^2 + \frac{45}{4} p_1 - \frac{1211}{4} p_0. $$ 
For $n=8$, the column corresponding to $A_{[3,2,1^{3}]}$ is 
$[8960,2960,848,440,512,-28,\linebreak -184,608,132,-132,-72,-52,0,96,-60,68,80,-160,-28,32,-220,-1120]^{\top}$, which reduces to the linear combination 
$$ E_{[3,2,1^{3}]} =  -2p_3 + \frac14 p_1p_2 + \frac{11}{2} p_2 - \frac12 p_1^2 + 11 p_1 - 380 p_0. $$ 
We can now interpolate the coefficients that depend on $t$ from their four values at $t=10,12,14,16$.  The coefficient of $p_2$ in $E_{[3,2,1^{n-5}]}$ will be $\frac{1}{8}(60-t)$, the coefficient of $p_1$ will be $\frac{1}{8}(-120+29t-t^2)$, and the coefficient of $p_0$ will be $\frac{1}{16}(116t-47t^2+t^3)$. Since the $E_{[3,2,1^{n-5}]}$ have unique expressions as $\mathbb{Q}[t]$-linear combinations of the $p_{\lambda}$'s, these expressions must hold for every choice of $2n \ge 10$.
 
In Proposition \ref{prop:E_2} below, we use the expression for $E_{\mu}$ to prove Conjecture \ref{conj:main} for $\mu=[2, 1^{n-2}]$. Because the expression for $E_{\mu}$ is much simpler for this particular relation, the proof of Proposition \ref{prop:E_2} is much less involved than the proofs given for other partitions $\mu$. As already noted in Section \ref{S:term}, Diaconis and Holmes \cite{DiaHol} studied the spectrum of $A_{[2,1^{n-2}]}$ in the context of random walks on the corresponding graph.

\begin{prop} \label{prop:E_2}
For all $n \ge 3$, the second largest eigenvalue of $A_{[2,1^{n-2}]}$ is 
\begin{align*}\phi^{[n-1,1]}_{[2,1^{n-2}]}= n^2 -3n + 1.
\end{align*}
The spectral gap of the graph corresponding to $A_{[2,1^{n-2}]}$ is $2n-1$.
\end{prop}

\begin{proof} 
From Table \ref{Tab:Formulae}, we see that 
\begin{align*}\phi^{\lambda}_{[2,1^{n-2}]} = E_{[2,1^{n-2}]}(c(\lambda))= \frac{1}{2}p_1(c(\lambda)) - \frac{1}{4}(2n).
\end{align*}

Recall that $p_1$ is the sum of the content of $2\lambda$. Observe that $p_1(\lambda)$ is an increasing function in the dominance ordering of partitions of $n$. 

As a consequence of the above observation, the value of $E_{[2, 1^{n-2}]}(c(\lambda))$ increases with the dominance ordering of the partitions of $n$.  Since $[n-1,1]$ is the second largest partition in the dominance ordering of partitions, the second largest eigenvalue will occur on the $[n-1,1]$-eigenspace, as claimed. 

As for the spectral gap, the valency is given by
\begin{align*}
v_{[2, 1^{n-2}]} = E_{[2, 1^{n-2}]}(c([n])) = \frac{1}{2} \left(\sum_{i=1}^{2n-1} i \right) - \frac{2n}{4} = \frac{(2n-1)(2n)}{4} - \frac{2n}{4} = n^2 - n; 
\end{align*}
and that 
\begin{align*}
\phi_{[2,1^{n-2}]}^{[n-1,1]} &= E_{[2, 1^{n-2}]}(c([n-1,1]))= \frac{1}{2} \left(\left(\sum_{i=1}^{2n-3} i\right) + (-1+0)\right) - \frac{2n}{4} \\
&= \frac{(2n-3)(2n-2)}{4} - \frac{1}{2} - \frac{n}{2} = n^2 -3n + 1.
\end{align*} 

Therefore, the spectral gap is
$v_{[2,1^{n-2}]}-\phi_{[2,1^{n-2}]}^{[n-1,1]}=2n-1.$ \end{proof}

Next, we describe how we intend to use the formulae of Table \ref{Tab:Formulae} to prove Conjecture \ref{conj:main} for the remaining partitions listed in the beginning of this section. Since we will prove Conjecture \ref{conj:main} by inducting on $n$, we first describe three ways to construct a partition of $n+1$ from a partition of $n$. Let $\lambda=(\lambda_1, \lambda_2, \ldots, \lambda_k)$ be a partition of $n$. Define $\lambda^+=(\lambda_1^+, \lambda_2^+, \ldots, \lambda_k^+)$ to be a partition of $n+1$ obtained from $\lambda$, in one of the following three ways:
\begin{enumerate} 
\item by adding a box to the first row of the Young tableau of $\lambda$:  
$$\lambda^+=(\lambda_1+1, \lambda_2, \ldots, \lambda_k);$$
\item by adding a box to the $i$-th row, with $2 \le i \le k$ and provided $\lambda_{i-1} > \lambda_i$:  
$$\lambda^+=(\lambda_1, \lambda_2, \ldots, \lambda_{i-1}, \lambda_i+1, \ldots \lambda_k^+); \text{ or}$$  
\item by adding a new row consisting of a single box: 
$$\lambda^+=(\lambda_1, \lambda_2, \ldots, \lambda_k, 1).$$ 
\end{enumerate}

Note that, although Case 1 and Case 2 appear to describe the same transformation, we will need to consider these two cases separately because our approach is different for each case. Recall that the sizes of the parts of a partition are always given in decreasing order. For that reason, $\lambda^+$ can only be obtained from $\lambda$ by  one of the three transformations given above.  See Figure \ref{fig:tablambdaplus} for an illustration of the second transformation. 

\ytableausetup{mathmode, boxsize=1.5em}
\begin{figure} [htpb]
\begin{center}
\begin{subfigure}{0.49\textwidth}
\centering
\begin{ytableau}
0 & 1&2 &3&4 &5\\
-1 & 0 & 1 &2\\
-2 & -1 
\end{ytableau}
\caption{Young tableau for partition $2\lambda=2[3,2,1]$ with its content.}
\end{subfigure}
\begin{subfigure}{0.49\textwidth}
\centering
\begin{ytableau}
0 & 1&2 &3&4 &5\\
-1 & 0 & 1 &2 &3 & 4\\
-2 & -1 
\end{ytableau}
\caption{Young tableau for partition $2\lambda^+=2[3,3,1]$ with its content.}
\end{subfigure}
\end{center}
\caption{Illustrating an admissible construction of $\lambda^+$ from $\lambda$.}
\label{fig:tablambdaplus}
\end{figure}

Now, we introduce some notation to simplify our computations. If $f \in \Lambda [t]$ is a symmetric function evaluated on $c(\lambda)$, and $\lambda^+$ is obtained from $\lambda$ as described above, we define 
$$\Delta f (\lambda):= f(c(\lambda^+))-f(c(\lambda)).$$ 
Namely, we will write $\Delta E_\mu(c(2\lambda))$ to refer to $\phi_\mu^{\lambda^+}-\phi_\mu^{\lambda}$. This parameter describes the changes in the eigenvalues when moving from $n$ to $n+1$ through one of the three possible transformations. For convenience, we write $E(\lambda)$ instead of $E(c(2\lambda))$, and $p_i(\lambda)$ (or simply $p_i$) instead of $p_i(c(2\lambda))$ when it is clear on which partition $\lambda$ is the symmetric function being evaluated on.

The purpose of Lemma \ref{induction_lemma} is to show that, in order to prove that the second largest eigenvalue is attained at the $2[n-1,1]$-eigenspace, it suffices to establish a base case and to show that the increment in the corresponding eigenvalues from $[n-1,1]$ to $[n,1]$ is always greater than any increment from $\lambda$ to $\lambda^{+}$ for $\lambda \neq [n]$. We will use Lemma \ref{induction_lemma} in the rest of the theorems of this section and the fact that 
we know Conjecture \ref{conj:main} holds for all $n \leqslant 15$. Thus, the base case of our induction will always be $n=15$. 

\begin{lemma}\label{induction_lemma}
Let $\mu=[\mu_1,\ldots,\mu_t]$ be a partition of $n_0$, and for any $n\geq n_0$ define $\mu(n) = [\mu_1,\ldots,\mu_t,1^{n-n_0}]$ a partition of $n$. Assume that the second largest eigenvalue of $A_\mu$ is $\phi_{\mu}^{[n_0-1,1]}$, and that 
\begin{align}\label{eqn_within_lemma}
\Delta E_{\mu(n)}(\lambda) \leq E_{\mu(n+1)}([n,1])-E_{\mu(n)}([n-1,1])    
\end{align}
for all $n\geq n_0$ and for every partition $\lambda$ of $n$ with $\lambda\neq [n]$. Then the second largest eigenvalue of $A_\mu(n)$ is $\phi_{\mu(n)}^{[n-1,1]}$ for all $n \geq n_0$.
\end{lemma}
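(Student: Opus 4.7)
The plan is to proceed by induction on $n$, with the base case $n = n_0$ supplied by the first hypothesis of the lemma. For the inductive step, I assume that $\phi_{\mu(n)}^{[n-1,1]}$ is the second largest eigenvalue of $A_{\mu(n)}$ and aim to show that $\phi_{\mu(n+1)}^{\lambda'} \leq \phi_{\mu(n+1)}^{[n,1]}$ for every $\lambda' \vdash n+1$ with $\lambda' \neq [n+1]$. The case $\lambda' = [n,1]$ holds with equality, so the remaining task is to handle $\lambda' \notin \{[n+1],[n,1]\}$.

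The first step is a short combinatorial observation: every such $\lambda'$ arises as $\lambda^+$ for some $\lambda \vdash n$ with $\lambda \neq [n]$. Indeed, since $\lambda' \neq [n+1]$, the partition $\lambda'$ has at least two rows, and removing the rightmost box of its last row always yields a valid partition $\lambda$ of $n$ having at least two rows (hence $\lambda \neq [n]$), except in the single case where $\lambda'$ has exactly two rows with the second of size one, which forces $\lambda' = [n,1]$ and has been excluded.

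The second step is to apply hypothesis~(\ref{eqn_within_lemma}) to this particular pair $(\lambda,\lambda^+)$. Unpacking $\Delta E_{\mu(n)}(\lambda) = \phi_{\mu(n+1)}^{\lambda^+} - \phi_{\mu(n)}^{\lambda}$ and rearranging, the assumed inequality becomes
$$
\phi_{\mu(n+1)}^{\lambda'} \;\leq\; \phi_{\mu(n+1)}^{[n,1]} + \bigl(\phi_{\mu(n)}^{\lambda} - \phi_{\mu(n)}^{[n-1,1]}\bigr).
$$
By the inductive hypothesis the parenthesised quantity is non-positive, so $\phi_{\mu(n+1)}^{\lambda'} \leq \phi_{\mu(n+1)}^{[n,1]}$, which closes the induction.

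There is no serious obstacle inside the lemma itself; the argument essentially telescopes the assumed increments along a chain $\lambda \to \lambda^+$ back to the base case, with the combinatorial observation ensuring that the telescoping can always be carried out with $\lambda \neq [n]$ at every step. The genuine technical work, and the main obstacle for the overall program, is to verify hypothesis~(\ref{eqn_within_lemma}) for each specific partition $\mu$ of interest, which will be carried out in the subsequent subsections using the explicit expressions from Table~\ref{Tab:Formulae}.
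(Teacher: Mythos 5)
Your proof is correct and follows essentially the same route as the paper's: induction on $n$ with the base case supplied by hypothesis, the rearrangement of the increment inequality combined with the inductive bound $\phi_{\mu(n)}^{\lambda}\leq\phi_{\mu(n)}^{[n-1,1]}$, and the observation that every partition of $n+1$ other than $[n+1]$ and $[n,1]$ is $\lambda^+$ for some $\lambda\vdash n$ with $\lambda\neq[n]$. Your write-up is in fact slightly more careful than the paper's, which asserts the surjectivity of $\lambda\mapsto\lambda^+$ onto the relevant partitions without the box-removal justification you give.
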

\begin{proof}

We prove by induction on $n$.  The base case $n=n_0$ holds by assumption. Assume the statement holds for some $n \geq n_0$; that is, we assume $E_{\mu(n)}(\lambda) \;\leq\; E_{\mu(n)}([n-1,1])$ for all partitions $\lambda \neq [n]$. By the hypothesis on $\Delta E_{\mu(n)}$, this implies $E_{\mu(n+1)}(\lambda^{+}) \;\leq\; E_{\mu(n+1)}([n,1])$ for a partition $\lambda^+$ of $n+1$. Since every partition of $n+1$ other than $[n+1]$ arises in this way, it follows that the second largest eigenvalue of $A_\mu(n+1)$ is $\phi_{\mu(n+1)}^{[n,1]}$. Hence, the statement follows by induction on $n$. \end{proof}

In what follows, all of our proofs to establish Equation \ref{eqn_within_lemma} for different partitions $\mu$ are divided into three cases that correspond to each of the three transformations of $\lambda$ to $\lambda^+$ described above. Below, we describe the effect of these three transformation when applied to the Young tableau of $2\lambda$:
\begin{enumerate} 
 \item if $\lambda_1^+=\lambda_1+1$, then two boxes are added to the first row of the Young tableau of $2\lambda$; 
\item if $\lambda_i^+=\lambda_i+1$ for $2 \le i \le k$ when $\lambda_{i-1} > \lambda_i$, then two boxes are added to an existing row of the Young tableau of $2\lambda$ that is not the first row; and
\item if $\lambda^+_{k+1} = 1$, then a new row with two boxes is added to the Young tableau of $2\lambda$.  
\end{enumerate}

Let $\lambda_i^+=\lambda_i+1$ for $1 \le i \le k+1$ where $\lambda_{i-1} > \lambda_i$ and $\lambda_{k+1}=0$. Notice that the first box of row $i$ in $2\lambda$ is filled with $-(i-1)$ and thus, the content of the last box of $2\lambda$ in row $i$ is filled with $-(i-1)+(2\lambda_i-1)$.  Consequently, the two new boxes of $\lambda^+$ are filled with $-(i-1)+2\lambda_i$ and $-(i-1)+2\lambda_i + 1$. This observation allows use to compute $\Delta f(\lambda)$ for several power symmetric functions (see Table \ref{Tab:FormulaeDelta}) as a function of $\lambda_i$ and the row $i$ that is modified while moving from $\lambda$ to $\lambda^+$. We will frequently need these formulae in our computations.

\begin{table} [htpb]
\begin{center}
\begin{tabular}{| l | l |}
\hline
$\Delta p_1$ & $-2(i-1)+4\lambda_i+1$ \\ \hline
$\Delta p_2$ & $2i^2-6i+5-8i\lambda_i+12\lambda_i+8\lambda_i^2$ \\ \hline
$\Delta p^2_1$ & $(8\lambda_i-4i+6)p_1+16\lambda_i^2-16i\lambda_i+24\lambda_i+4i^2-12i+9$ \\ \hline
$\Delta p_3$ &  $-2i^3+12i^2\lambda_i -24i \lambda_i^2 + 16 \lambda_i^3 +9i^2 -36i\lambda_i +36\lambda_i^2 - 15i + 30\lambda_i+9$ \\ \hline
\end{tabular}
\vspace{1ex}
\caption{Formulae for increments of certain power symmetric functions}.
\label{Tab:FormulaeDelta}
\end{center}
\end{table}

We are now prepared to prove Conjecture \ref{conj:main} for several matrices, starting with $A_{[3,1^{n-3}]}$.

\begin{theorem} 
\label{thm:E_3}
For all $n \ge 5$, the second largest eigenvalue of $A_{[3,1^{n-3}]}$ is 
\begin{align*}
\phi^{[n-1,1]}_{[3,1^{n-3}]} = \frac{4}{3} n^3 - 8 n^2 + \frac{38}{3} n -4.
\end{align*}
The spectral gap of the graph corresponding to $A_{[3,1^{n-3}]}$ is  $4n^2 - 10 n + 4.$
\end{theorem}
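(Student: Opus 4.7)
The plan is to apply Lemma~\ref{induction_lemma} with $\mu = [3]$, so that $\mu(n) = [3,1^{n-3}]$, using $n_0 = 15$ as the base case. This base case is covered by the computer verification of Conjecture~\ref{conj:main} for all $n \leq 15$, so it remains to establish
\[
\Delta E_{[3,1^{n-3}]}(\lambda) \;\leq\; E_{[3,1^{n-2}]}([n,1]) - E_{[3,1^{n-3}]}([n-1,1])
\]
for every $n \geq 15$ and every partition $\lambda \neq [n]$ of $n$. The main tools are the closed form $E_{[3,1^{n-3}]} = \tfrac{1}{2}p_2 - p_1 + \tfrac{1}{4}(3t-t^2)$ from Table~\ref{Tab:Formulae} together with the power-sum increments recorded in Table~\ref{Tab:FormulaeDelta}.

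First I would derive an explicit formula for $\Delta E_{[3,1^{n-3}]}(\lambda)$ when $\lambda^+$ is obtained from $\lambda$ by adding a box to row $i$. Because $t$ changes from $2n$ to $2n+2$, the constant term contributes $\tfrac{1}{2} - 2n$; combining this with $\Delta p_1$ and $\Delta p_2$ from Table~\ref{Tab:FormulaeDelta} and simplifying gives
\[
\Delta E_{[3,1^{n-3}]}(\lambda) \;=\; 4\lambda_i^{\,2} - 2(2i-1)\lambda_i + i(i-1) - 2n.
\]
Specialising this to the transformation $[n-1,1] \to [n,1]$ (that is, $i=1$ and $\lambda_i = n-1$) produces the right-hand side $4n^2 - 12n + 6$, so the inductive inequality reduces to
\[
4\lambda_i^{\,2} - 2(2i-1)\lambda_i + i(i-1) \;\leq\; 4n^2 - 10n + 6.
\]

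I would then verify this inequality in the three admissible cases for the transformation $\lambda \to \lambda^+$. In Case~1 ($i=1$, $1 \leq \lambda_1 \leq n-1$), the left-hand side is $2\lambda_1(2\lambda_1 - 1)$, which is strictly increasing for $\lambda_1 \geq 1$ and attains equality at $\lambda_1 = n-1$. In Case~2 ($i \geq 2$ with $\lambda_{i-1} > \lambda_i$), the partition constraints force $\lambda_i \leq n/i$, and substituting this bound into the parabola in $\lambda_i$ yields a quantity of order $4n^2/i^2 + O(n)$, which is dominated by the right-hand side for $i \geq 2$ and $n \geq 15$. In Case~3 (creating a new singleton row, with $\lambda_i = 0$ and $i = k+1 \leq n+1$), the left-hand side reduces to $k(k+1)$, at most $n(n+1)$, and the inequality is immediate for $n \geq 3$.

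The main obstacle is the $i = 2$ subcase of Case~2, where $\lambda_2$ can be as large as $\lfloor (n-1)/2 \rfloor$ (using $\lambda_1 > \lambda_2$) and the parabolic bound is least slack; here I would use the sharper estimate $\lambda_2 \leq (n-1)/2$ and check the resulting polynomial inequality in $n$ explicitly. Once the inductive inequality is established, the claimed formula $\phi^{[n-1,1]}_{[3,1^{n-3}]} = \tfrac{4}{3}n^3 - 8n^2 + \tfrac{38}{3}n - 4$ follows directly from Lemma~\ref{lem:foreig} together with $r_1 = n-3$ and the valency $v_{[3,1^{n-3}]} = \tfrac{4}{3}n(n-1)(n-2)$ from Lemma~\ref{lem:degree}, and the spectral gap $v_{[3,1^{n-3}]} - \phi^{[n-1,1]}_{[3,1^{n-3}]}$ simplifies to $4n^2 - 10n + 4$.
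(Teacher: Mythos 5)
Your proposal follows essentially the same route as the paper: the same reduction via Lemma~\ref{induction_lemma} to the computer-verified base case $n=15$, the identical increment formula $\Delta E_{[3,1^{n-3}]}(\lambda) = 4\lambda_i^{2} - 2(2i-1)\lambda_i + i(i-1) - 2n$, the same target value $4n^2-12n+6$ from $[n-1,1]\to[n,1]$, and the same three-case analysis. The only quibble is in Case~2, where substituting $\lambda_i \le n/i$ directly into the convex quadratic is not automatically an upper bound (and $i(i-1)$ need not be $O(n)$ when $i$ is large); it is cleaner, as in the paper, to first drop the nonpositive terms and bound $4\lambda_i^{2} + 2\lambda_i + i^{2} \le 2n^2 + n$, which still sits comfortably below the target, and in particular the $i=2$ subcase you flag as the main obstacle already has ample slack.
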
 

\begin{proof} As already noted, the statement holds for all $n \leq 15$. Therefore, by Lemma \ref{induction_lemma}, to prove the statement it suffices to show that 
$$
\Delta E_{[3,1^{\,n-3}]}(\lambda) 
\;\leq\; 
E_{[3,1^{\,n-2}]}([n,1]) - E_{[3,1^{\,n-3}]}([n-1,1])
$$
for all $n\geq 15$ and for every partition $\lambda$ of $n$ with $\lambda \neq [n]$.

Let $\lambda = [\lambda_1, \dots, \lambda_k]$ be a partition of $n$ and suppose that $\lambda^+$ is constructed from $\lambda$ in one of the three admissible ways described above this theorem. We derive an upper-bound on $\Delta E_{[3,1^{n-3}]}(\lambda)$ for each case. Central to our argument is the character formula $E_{[3,1^{n-3}]} = \frac{p_2}{2} - p_1 + \frac{3t-t^2}{4}$. Refer to Table \ref{Tab:FormulaeDelta} for the formulas for $\Delta p_1$ and  $\Delta p_2$. Additionally,  we have $\Delta \frac{3t-t^2}{4}= \frac{3(2n+2)-(2n+2)^2}{4}- \frac{3(2n)-(2n)^2}{4}=\frac{1-4n}{2}.$ Therefore, we obtain
\begin{align}\label{eq:E3}
    \Delta E_{[3,1^{n-3}]}(\lambda) &=  \frac{1}{2}\Delta p_2 - \Delta p_1 + \Delta \frac{3t-t^2}{4} \nonumber\\
    &=4\lambda_i^2 + 2\lambda_i + i^2-4i\lambda_i-i-2n,
\end{align}
where $i=1,\ldots,k+1$ and $\lambda_{k+1}=0$.

\noindent Case 1: Let $\lambda_1^+=\lambda_1+1$, that is, $i=1$. We see that

\begin{equation} \label{eq:E3c1}
\Delta E_{[3,1^{n-3}]}(\lambda)= 4 \lambda_1^2 - 2 \lambda_1 - 2n.
\end{equation}

Letting $\lambda_1=n-1$ in Equation \ref{eq:E3c1}, we obtain
  
\begin{equation} \label{eq:E3c1n}
E_{[3,1^{n-2}]}([n,1])-E_{[3,1^{n-3}]}([n-1,1])=4(n-1)^2 - 2(n-1) - 2n = 4n^2 -12n +6.
\end{equation}
 
Next, we derive an upper-bound on $\Delta E_{[3,1^{n-3}]}(\lambda)$ when $\lambda \not \in \{[n], [n-1,1]\}$. If $\lambda \not \in \{[n], [n-1,1]\}$, then $\lambda_1 \leqslant  n-2$. Hence, substituting  $\lambda_1 \leqslant  n-2$ in Equation \ref{eq:E3c1}, we obtain that  $\Delta E_{[3,1^{n-3}]}(\lambda) \leqslant 4 n^2 - 20 n + 20$.

Therefore, when $n \geqslant 15$, we have $\Delta E_{[3,1^{n-3}]}(\lambda)< E_{[3,1^{n-2}]}([n,1])-E_{[3,1^{n-3}]}([n-1])$. 

\noindent  Case 2: Let $\lambda_i^+=\lambda_i+1$ for $i\geqslant 2$. Observe that $i \le k \le n$, where $k$ is the number of parts of $\lambda$, and $\lambda_i \le \frac{n}{2}$. Therefore, by removing  the nonpositive terms of Equation \ref{eq:E3}, and replacing $i$ and $\lambda_i$ with $n$ and $\frac{n}{2}$, respectively, we obtain that 

\begin{align*}
    \Delta E_{[3, 1^{n-3}]}(\lambda) \leqslant 4\left(\frac{n}{2}\right)^2 + 2\left(\frac{n}{2}\right)+n^2 = 2n^2+n.
\end{align*}

However, in Equation \ref{eq:E3c1n}, we computed $E_{[3,1^{n-2}]}([n,1])-E_{[3,1^{n-3}]}([n-1])$, and thus, we see that  $ \Delta E_{[3, 1^{n-3}]}(\lambda) < E_{[3,1^{n-2}]}([n,1])-E_{[3,1^{n-3}]}([n-1])$ when $\lambda \not\in \{[n], [n-1,1]\}$ and $n \geqslant 15$.  \\

\noindent Case 3: Let $\lambda^+_{k+1}=1$, that is, $i=k+1$ and $\lambda_k=0$. By using Equation \ref{eq:E3}, this means that
\begin{equation} \label{eq:ckE3}
\Delta E_{[3, 1^{n-3}]}= k^2 +k - 2n.
\end{equation}

\noindent Note that $k \leqslant n$.  Substituting the maximum value for $k$ in Equation \ref{eq:ckE3} and removing the negative term, we see that $ \Delta E_{[3, 1^{n-3}]}(\lambda) \leqslant n^2$. By Equation \ref{eq:E3c1n}, we have  $ \Delta E_{[3, 1^{n-3}]}(\lambda)\leqslant  E_{[3,1^{n-2}]}([n,1])-E_{[3,1^{n-3}]}([n-1])$ when $\lambda \not\in \{[n], [n-1,1]\}$ and $n \geqslant 15$.  

In all three cases, we have shown that  $ \Delta E_{[3, 1^{n-3}]}(\lambda) \leqslant E_{[3,1^{n-2}]}([n,1])-E_{[3,1^{n-3}]}([n-1])$ for all $\lambda \vdash n$ such that $\lambda \not\in \{[n-1,1], [n]\}$ and $n\geq 15$. In conclusion, we see that  $\phi^{[n-1,1]}_{[3, 1^{n-3}]}$ is the second largest eigenvalue of $A_{[3, 1^{n-3}]}$. 

Next, we compute the spectral gap using Lemma \ref{lem:degree} and Lemma \ref{lem:foreig} to obtain

\begin{align*}
\phi^{[n]}_{[3, 1^{n-3}]}&=\frac{4n(n-1)(n-2)}{3} && &\phi^{[n-1,1]}_{[3, 1^{n-3}]}&=\frac{2(2n-1)(n-2)(n-3)-2n(n-2)}{3}\\
&=\frac{4}{3} n^3 - 4 n^2 + \frac{8}{3} n ; && &&=\frac{4}{3} n^3 - 8 n^2 + \frac{38}{3} n -4.
\end{align*}

\noindent Therefore, the spectral gap is $\phi^{[n]}_{[3, 1^{n-3}]}- \phi^{[n-1,1]}_{[3, 1^{n-3}]}=4n^2-10n+4$. \end{proof}

We now proceed with proof of Conjecture \ref{conj:main} for the relation $\mu=[2,2,1^{n-4}]$.  Before we do so, we will required the following lemma on the properties of $p_1(\lambda)$. This lemma will also be required for the proof of the rest of the theorems in this section. 

\begin{lemma} \label{lem:p1}
Let $\lambda \vdash n$ such that $\lambda=[\lambda_1, \lambda_2, \ldots, \lambda_k]$. 
\begin{enumerate}
\setlength\itemsep{1em}
\item If $\lambda_1>\frac{n}{2}$, then $p_1(\lambda)>\frac{n^2}{4}$
\item For all $\lambda \vdash n$, we have  $-n^2+2n\leqslant p_1(\lambda)$.  
\end{enumerate}
\end{lemma}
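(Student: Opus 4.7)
The plan is to combine the monotonicity of $p_1$ in the dominance order on partitions of $n$ (already observed in the proof of Proposition \ref{prop:E_2}) with an explicit row-by-row evaluation at hook partitions. Summing the contents $j-i$ over the cells of $2\lambda=[2\lambda_1,\ldots,2\lambda_k]$ row by row gives the closed form
$$p_1(\lambda)=\sum_{i=1}^{k}\sum_{j=1}^{2\lambda_i}(j-i)=\sum_{i=1}^{k}\lambda_i(2\lambda_i+1-2i).$$
Monotonicity itself can be justified via the identity $p_1(\nu)=\sum_i\binom{\nu_i}{2}-\sum_j\binom{\nu'_j}{2}$, together with the convexity of $x\mapsto\binom{x}{2}$ and the standard duality $\mu\trianglelefteq\lambda\Longleftrightarrow\lambda'\trianglelefteq\mu'$; alternatively it follows from the fact that each covering relation in the dominance order corresponds to sliding one box up and to the right, which strictly raises its content.

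For Part (2), since $[1^n]$ is the unique minimum of the dominance poset on partitions of $n$, monotonicity gives $p_1(\lambda)\geq p_1([1^n])$. The closed form evaluated at $[1^n]$ is $\sum_{i=1}^{n}(3-2i)=3n-n(n+1)=-n^2+2n$, which is the claimed lower bound.

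For Part (1), I would fix $\lambda$ with $\lambda_1>n/2$ and check directly from the partial-sum definition of dominance that the hook $[\lambda_1,1^{n-\lambda_1}]$ is dominated by every partition of $n$ with first part $\lambda_1$. This reduces to the observation that the $k$-th partial sum of the hook is $\lambda_1+k-1$ for $2\leq k\leq n-\lambda_1+1$, while the parts of $\lambda$ after the first are each at least $1$ up to its length $\ell(\lambda)\leq n-\lambda_1+1$. Monotonicity together with the closed form then yields
$$p_1(\lambda)\;\geq\; p_1\!\left([\lambda_1,1^{n-\lambda_1}]\right)=2\lambda_1^2-\lambda_1+\sum_{i=2}^{n-\lambda_1+1}(3-2i)=\lambda_1^2+(2n-1)\lambda_1-n^2.$$
This is a strictly increasing quadratic in $\lambda_1$ for $\lambda_1\geq 0$, so its minimum over integers $\lambda_1>n/2$ is attained at $\lambda_1=\lfloor n/2\rfloor+1$. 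A short parity case split (the value is $n^2/4+5n/2$ for $n$ even and $n^2/4+n-1/4$ for $n$ odd) confirms that this minimum strictly exceeds $n^2/4$.

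The only mild technical point is the dominance monotonicity of $p_1$; the Schur-convexity route makes this transparent, and all remaining steps are elementary algebraic verifications.
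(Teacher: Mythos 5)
Your proposal is correct and follows essentially the same route as the paper: exploit the monotonicity of $p_1$ in the dominance order to reduce Part (2) to $[1^n]$ and Part (1) to the extremal hook partitions, then compute the content sums explicitly (your values $\frac{n^2}{4}+\frac{5n}{2}$ and $\frac{n^2}{4}+n-\frac14$ agree with the paper's). The only difference is cosmetic: you supply a justification of the dominance monotonicity, which the paper merely asserts, and you organize Part (1) by first passing to the hook with the same first part and then minimizing the resulting quadratic over $\lambda_1>\frac{n}{2}$.
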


\begin{proof}
We note that $p_1(\lambda)$ is an increasing function in the dominance ordering of partitions of $n$. Therefore, it suffices to prove the statement for the lowest applicable partition in this ordering, namely $\lambda=[\frac{n}{2}+1, 1^{\frac{n-2}{2}}]$ when $n$ is even and $\lambda=[\frac{n+1}{2}, 1^{\frac{n-1}{2}}]$ when $n$ is odd:
\begin{align*}
p_1\left(\left[\frac{n}{2}+1, 1^{\frac{n-2}{2}}\right]\right)&=\sum_{i=1}^{n+1}i-2\left(\sum_{i=1}^{\frac{n-2}{2}}i\right)+\frac{n-2}{2}\\
&= \binom{n+2}{2}-2\binom{\frac{n}{2}}{2}+\frac{n}{2}-1\\
&=\frac{n^2}{4}+\frac{5n}{2};\\
p_1\left(\left[\frac{n+1}{2}, 1^{\frac{n-1}{2}}\right]\right)&=\sum_{i=1}^{n}i-2\left(\sum_{i=1}^{\frac{n-1}{2}}i\right)+\frac{n-1}{2}\\
&= \binom{n+1}{2}-2\binom{\frac{n+1}{1}}{2}+\frac{n-1}{2}\\
&=\frac{n^2}{4}+n-\frac{1}{4}.
\end{align*}

We prove the second equality by an argument similar to that given for the first statement, it suffices to show that $-n^2+2n\leqslant p_1([1^n])$. It is not too difficult to see that $p_1([1^n]) = -2 \binom{n}{2}+n-1+1=-n^2+2n.$\end{proof}

\begin{theorem} 
\label{thm:E_44}
For all $n \ge 6$, the second largest eigenvalue of $A_{[2,2,1^{n-4}]}$ is 
$$\phi^{[n-1,1]}_{[2,2,1^{n-4}]} = \frac12 n^4 - 5n^3 + \frac{33}{2} n^2 -20n +6.$$
The spectral gap of the graph corresponding to $A_{[2,2,1^{n-4}]}$ is 
$$ \phi^{[n]}_{[2,2,1^{n-4}]} - \phi^{[n-1,1]}_{[2,2,1^{n-4}]} = 2n^3-11n^2+17n-6.$$
\end{theorem}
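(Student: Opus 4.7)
The plan is to mirror the inductive argument of Theorem \ref{thm:E_3} via Lemma \ref{induction_lemma} with base partition $[2,2]$. The computational verification cited in the introduction confirms the base case for all $n \le 15$, so it remains to establish, for all $n \ge 15$ and every partition $\lambda$ of $n$ with $\lambda \ne [n]$, the inductive inequality
$$\Delta E_{[2,2,1^{n-4}]}(\lambda) \;\leq\; E_{[2,2,1^{n-3}]}([n,1]) - E_{[2,2,1^{n-4}]}([n-1,1]).$$
Using the character formula $E_{[2,2,1^{n-4}]} = \frac{p_1^2}{8} - \frac{3p_2}{4} + \frac{(10-t)p_1}{8} + \frac{9t^2 - 24t}{32}$ from Table \ref{Tab:Formulae}, and combining the increment rules for $\Delta p_1$, $\Delta p_2$, and $\Delta p_1^2$ in Table \ref{Tab:FormulaeDelta} with the shift $t \mapsto t+2$, I would obtain an explicit formula for $\Delta E_{[2,2,1^{n-4}]}(\lambda)$ as a polynomial in $i$, $\lambda_i$, and $n$, together with a single linear occurrence of $p_1(\lambda)$ whose coefficient simplifies to $\frac{1}{2}(2\lambda_i - i + 1)$ after cancellations between $\Delta p_1^2$ and $\Delta\left(\frac{(10-t)p_1}{8}\right)$.

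I would then split into the three standard cases corresponding to the admissible constructions of $\lambda^+$:
\begin{enumerate}
\item $\lambda_1^+ = \lambda_1 + 1$ (adding a box to the first row);
\item $\lambda_i^+ = \lambda_i + 1$ for some $2 \le i \le k$;
\item $\lambda_{k+1}^+ = 1$ (adding a new row of length one).
\end{enumerate}
Setting $i = 1$ and $\lambda_1 = n-1$ in Case 1 forces $\lambda = [n-1,1]$ and recovers the right-hand side exactly. For every other $\lambda$ in Case 1 we have $\lambda_1 \le n-2$, and in Cases 2 and 3 we use the inequalities $i \le k+1 \le n$ together with $\lambda_i \le n/2$ for $i \ge 2$, just as in Theorem \ref{thm:E_3}. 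The factor of $p_1(\lambda)$ is controlled by Lemma \ref{lem:p1}: since $p_1$ is monotone in the dominance order and every $\lambda \ne [n]$ satisfies $\lambda \trianglelefteq [n-1,1]$, we have the upper bound $p_1(\lambda) \le p_1([n-1,1]) = 2n^2 - 5n + 2$; the lower bound $p_1(\lambda) \ge -n^2 + 2n$ from Lemma \ref{lem:p1} is used whenever the coefficient of $p_1$ is negative.

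With these bounds in hand, each of the three cases reduces to verifying a polynomial inequality of degree at most three in $n$ with dominant negative coefficient, valid for $n \ge 15$. The main new obstacle, compared with Theorem \ref{thm:E_3}, is precisely the appearance of the $p_1^2$ term in the character formula, which forces $\Delta E_{[2,2,1^{n-4}]}(\lambda)$ to depend linearly on $p_1(\lambda)$; without Lemma \ref{lem:p1} the three cases would not collapse to inequalities in $n$ alone, and the bookkeeping grows one degree heavier. Once the inductive step is established, the claimed spectral gap follows immediately: Lemma \ref{lem:degree} yields $\phi^{[n]}_{[2,2,1^{n-4}]} = v_{[2,2,1^{n-4}]} = \tfrac{1}{2}n(n-1)(n-2)(n-3)$, Lemma \ref{lem:foreig} applied with $r_1(\mu) = n-4$ recovers the stated formula for $\phi^{[n-1,1]}_{[2,2,1^{n-4}]}$, and subtracting gives $2n^3 - 11n^2 + 17n - 6$.
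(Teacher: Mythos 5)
Your overall strategy is exactly the paper's: invoke Lemma \ref{induction_lemma} with the computational base case $n\le 15$, compute $\Delta E_{[2,2,1^{n-4}]}(\lambda)$ from Table \ref{Tab:FormulaeDelta} (your coefficient $\tfrac12(2\lambda_i-i+1)$ of $p_1$ agrees with the paper's Equation \ref{equ:42}), split into the three standard cases, and finish the eigenvalue/gap formulas via Lemmas \ref{lem:degree} and \ref{lem:foreig}. The one substantive deviation is your choice of upper bound on $p_1(\lambda)$, and it creates a genuine problem in Case 1. There, with $i=1$ the increment is
\[
\Delta E_{[2,2,1^{n-4}]}(\lambda)=\lambda_1 p_1-4\lambda_1^2-(n-2)\lambda_1+2n,
\]
and the target is $E_{[2,2,1^{n-3}]}([n,1])-E_{[2,2,1^{n-4}]}([n-1,1])=2n^3-12n^2+20n-8$. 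If you discard the negative terms and substitute $\lambda_1\le n-2$ together with your bound $p_1(\lambda)\le p_1([n-1,1])=2n^2-5n+2$, you get $\Delta E\le (n-2)(2n^2-5n+2)+2n=2n^3-9n^2+14n-4$, and the required inequality $2n^3-9n^2+14n-4\le 2n^3-12n^2+20n-8$ is equivalent to $3n^2-6n+4\le 0$, which is false for every $n$. So the bound $p_1(\lambda)\le p_1([n-1,1])$ is too weak to run the termwise estimate you describe.

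There are two standard repairs. The paper's choice: once $\lambda\notin\{[n],[n-1,1]\}$ you have $\lambda\trianglelefteq[n-2,2]$, so monotonicity of $p_1$ in the dominance order gives the sharper bound $p_1(\lambda)\le p_1([n-2,2])=2n^2-9n+12$, which yields $\Delta E\le 2n^3-13n^2+32n-24$ and closes the case for $n\ge 15$. Alternatively, keep the quadratic term: with $p_1\le 2n^2-5n+2$ the increment is at most $-4\lambda_1^2+(2n^2-6n+4)\lambda_1+2n$, a concave quadratic in $\lambda_1$ whose vertex $\tfrac14(n^2-3n+2)$ exceeds $n-2$ for $n$ large, so it is increasing on $[1,n-2]$ and evaluating at $\lambda_1=n-2$ gives $2n^3-14n^2+34n-24$, which also suffices; but this monotonicity check is an extra step you would need to make explicit. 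Cases 2 and 3 as you outline them (using $i\lambda_i\le n$, $\lambda_i\le n/2$, and the lower bound $p_1\ge -n^2+2n$ when the coefficient of $p_1$ is negative) do go through even with your weaker upper bound, since there the leading term of the estimate is only $n^3$ against the target's $2n^3$. The closing computations of $v_{[2,2,1^{n-4}]}$, $\phi^{[n-1,1]}_{[2,2,1^{n-4}]}$, and the gap are correct.
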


\begin{proof} Since Conjecture \ref{conj:main} has already been verified for all $n \leq 15$, Lemma \ref{induction_lemma} reduces the proof of the statement to showing that
$$
\Delta E_{[2,2,1^{\,n-4}]}(\lambda) 
\;\leq\; 
E_{[2,2,1^{\,n-3}]}([n,1]) - E_{[2,2,1^{\,n-4}]}([n-1,1])
$$
for all $n \geq 15$ and for every partition $\lambda$ of $n$ with $\lambda \neq [n]$. For that, we know
$$\phi^{\lambda}_{[2,2,1^{n-4}]} = E_{[2,2,1^{n-4}]}(\lambda) = \frac{p_1^2}{8}-\frac{3p_2}{4} + \frac{(10-t)p_1}{8} + \frac{9t^2-24t}{32}.$$  
\noindent This means that
\begin{equation} \label{eq:E422c1}
\Delta E_{[2,2,1^{n-4}]}(\lambda) = \frac{1}{8} \Delta(p_1^2)-\frac{3}{4} \Delta(p_2) + \frac{1}{8} \Delta((10-t)p_1) + \frac{1}{32}\Delta(9t^2-24t).
\end{equation}
When $\lambda_i^+=\lambda_i$, for $1 \leqslant i \leqslant k$, we have 
\begin{align*}
\Delta((10-t)p_1) &= (10 - (2n+2))(p_1 + \Delta p_1) - (10-2n)p_1 \\ 
                &= -2p_1 + (8-2n)\Delta p_1 \\
                &= -2p_1 + (8-2n)(4\lambda_i -2i +3) \\
                &= -2p_1 + 32 \lambda_i -8n\lambda_i -16i + 4ni + 24 -6n, \\
\Delta(9t^2-24t) &= [9(2n+2)^2-24(2n+2)] - [9(2n)^2 - 24 (2n)] \\ 
                &= 9(8n+4) -24(2) \\
                &= 72n - 12.
\end{align*}
Refer to Table \ref{Tab:FormulaeDelta} for the formula of $\Delta p_2$ and $\Delta p_1^2$. With the assistance of a computer, we then simplify Equation \ref{eq:E422c1}:
\begin{equation} \label{equ:42}
\Delta E_{[2,2,1^{n-4}]}(\lambda) = \left(\lambda_i - \frac{i}{2}+\frac{1}{2}\right)p_1 + 4i\lambda_i+\frac{1}{2}ni +\frac{3}{2}n-i(i-1)-4\lambda_i^2 -n\lambda_i -2\lambda_i,
\end{equation}
where $i=1,\ldots,k+1$ and $\lambda_{k+1}=0$.

\noindent Case 1: Let $\lambda_1^+=\lambda_1+1$, that is, $i=1$. From Equation \ref{equ:42}, we have  
\begin{equation} \label{eq:42c1}
\Delta E_{[2,2,1^{n-4}]}(\lambda) = \lambda_1 p_1 -4 \lambda_1^2 -(n-2)\lambda_1+2n.
\end{equation}

When $\lambda=[n-1,1]$, then $\lambda_1=n-1$ and $p_1 = -1+\frac{1}{2}(2n-3)(2n-2)$. Thus, we obtain 
\begin{align}\label{eqn_this}
    E_{[2,2,1^{n-3}]}([n,1])-E_{[2,2,1^{n-4}]}([n-1,1]) = 2n^3 - 12n^2 +20n -8.
\end{align}

Next, we note that, when $\lambda \not\in \{[n], [n-1,1]\}$, we have $\lambda_1 \leqslant n-2$ and $p_1(\lambda) \leqslant 2 +\frac{1}{2} (2n-5)(2n-4) = 2n^2-9n+12$. If we remove the second and third terms of Equation \ref{eq:42c1} and substitute the upper-bound for $\lambda_1$ and $p_1$, we obtain  the following upper-bound on $\Delta E_{[2,2,1^{n-4}]}(\lambda)$ for $\lambda \not\in \{[n], [n-1,1]\}$:
\begin{align*} 
\Delta E_{[2,2,1^{n-4}]}(\lambda) \le (n-2)(2n^2-9n+12)+2n= 2n^3 - 13 n^2 + 32n -24.
\end{align*}

Using Equation \ref{eqn_this} note that $2n^3 - 13 n^2 + 30n -24 \leqslant E_{[2,2,1^{n-3}]}([n,1])-E_{[2,2,1^{n-4}]}([n-1,1])$ for all $n \ge 15$. \\

\noindent Case 2: Let $\lambda_i^+=\lambda_i+1$ for $i\geq 2$. Since $\lambda$ is a partition, we will have $i \lambda_i < \sum_{\ell = 1}^i \lambda_{\ell} \le n$. Moreover, since $2 \le i \le n-2$, then $\lambda_i \leqslant \frac{n}{2}$ and $(\lambda_i -\frac{i}{2}+\frac{1}{2})\leqslant \lambda_i$.  Lastly, if $\lambda \not \in \{[n], [n-1,1]\}$, then $p_1 \le  2n^2-9n+12$. Removing the negative terms from Equation \ref{equ:42} and substituting the maximum values of  $p_1$, $i$, $\lambda_i -\frac{i}{2}+\frac{1}{2}$, and $i\lambda_i$, we obtain
\begin{align*}
 \Delta E_{([2,2,1^{n-4}]}(\lambda)\leqslant& (\lambda_i)p_1+4i\lambda_i+\frac{1}{2}ni+\frac{3}{2}n\\
\leqslant&\left(\frac{n}{2}\right)( 2n^2-9n+12)+4n+\frac{1}{2}n(n-2)+\frac{3}{2}n  \\
\leqslant & n^3-4n^2+\frac{21n}{2}.
\end{align*}

From the inequality above and Equation \ref{eqn_this}, we conclude that, $\Delta E_{[2,2,1^{n-4}]}(\lambda)\leqslant n^3-4n^2+\frac{21n}{2} < E_{[2,2,1^{n-3}]}([n,1])-E_{[2,2,1^{n-4}]}([n-1,1])$ for all $n \ge 15$.  \\

\noindent Case 3: Let $\lambda^+_{k+1}=1$, that is, $i=k+1$ and $\lambda_k=0$. Substituting these two values into Equation \ref{equ:42}, we obtain 

\begin{equation} \label{equ:42k}
\Delta E_{[2,2,1^{n-4}]}(\lambda) = \left(\frac{-k}{2}\right)p_1-(k+1)^2 + \frac{1}{2}n(k+1)+(k+1)+\frac{3}{2}n. 
\end{equation}

Observe that $k \leqslant n$. By Statement 2 of Lemma \ref{lem:p1}, we have $-p_1\leqslant n^2-2n$. Removing the second term in Equation \ref{equ:42k} and noting that $(\frac{-k}{2})p_1 \leqslant \frac{n}{2} (n^2-2n)$, we have the following inequality:

\begin{align*}
\Delta E_{[2,2,1^{n-4}]}(\lambda) \leqslant &~ \left(\frac{-k}{2}\right)p_1 + \frac{1}{2}n(k+1)+(k+1)+\frac{3}{2}n\\
\leqslant &~ \frac{n}{2}(n^2-2n)+\frac{1}{2}(n)(n+1)+(n+1)+\frac{3}{2}n\\
\leqslant&~ \frac{n^3}{2}-\frac{n^2}{2}+3n+1.
\end{align*}

It follows that $\Delta E_{[2,2,1^{n-4}]}(\lambda)\leqslant  \frac{n^3}{2}-\frac{n^2}{2}+3n+1< E_{[2,2,1^{n-3}]}([n,1])-E_{[2,2,1^{n-4}]}([n-1,1])$ for $n \ge 15$. Therefore, the second largest eigenvalue of $A_{[2,2,1^{n-4}]}$ is $\phi_{[2,2,1^{n-4}]}^{[n-1,1]}$. The formula for $\phi_{[2,2,1^{n-4}]}^{[n-1,1]}$ and the spectral gap can be easily verified using Lemma \ref{lem:degree} and Lemma \ref{lem:foreig}. \end{proof}

\begin{theorem} 
\label{thm:E_8}
For all $n \ge 6$, the second largest eigenvalue of $A_{[4,1^{n-4}]} $ is 
$$\phi^{[n-1,1]}_{[4,1^{n-4}]} = 2n^4-20n^3+66n^2-80n+24.$$
The spectral gap of the graph corresponding to $A_{[4,1^{n-4}]} $ is 
$$ \phi^{[n]}_{[4,1^{n-4}]} - \phi^{[n-1,1]}_{[4,1^{n-4}]} = 8n^3-44n^2+68n-24.$$
\end{theorem}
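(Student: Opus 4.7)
The plan is to mirror the inductive strategy used in Theorems \ref{thm:E_3} and \ref{thm:E_44}. Since Conjecture \ref{conj:main} has been computationally verified for all $n \le 15$, Lemma \ref{induction_lemma} reduces the task to establishing the inequality
$$\Delta E_{[4,1^{n-4}]}(\lambda) \le E_{[4,1^{n-3}]}([n,1]) - E_{[4,1^{n-4}]}([n-1,1])$$
for every $n \ge 15$ and every partition $\lambda \vdash n$ with $\lambda \ne [n]$, where $\lambda^+$ is obtained from $\lambda$ by one of the three admissible transformations. Starting from the formula $E_{[4,1^{n-4}]} = \tfrac{p_3}{2} - \tfrac{9p_2}{4} + \tfrac{(11-2t)p_1}{2} + \tfrac{8t^2-23t}{8}$ of Table \ref{Tab:Formulae}, I would combine the entries of Table \ref{Tab:FormulaeDelta} with $\Delta\bigl[(11-2t)p_1\bigr] = -4\,p_1 + (7-4n)\Delta p_1$ and $\Delta(8t^2-23t) = 64n-14$ to express $\Delta E_{[4,1^{n-4}]}(\lambda)$ as an explicit polynomial in $i$, $\lambda_i$, $n$, and $p_1(\lambda)$. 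A direct substitution at $\lambda=[n-1,1]$ evaluates the right-hand side; based on the closed form in the theorem, it should simplify to a cubic of the form $8n^3 - 48n^2 + 80n - 32$.

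I would then split into the three cases that appeared in the previous proofs. In Case 1 ($i=1$), specializing $i=1$ yields a cubic in $\lambda_1$ with leading term $8\lambda_1^3$ plus a linear correction in $p_1$; since $\lambda \notin \{[n],[n-1,1]\}$ forces $\lambda_1 \le n-2$, and Lemma \ref{lem:p1}(2) gives $-2p_1 \le 2(n^2-2n)$, maximizing the resulting cubic over $\lambda_1 \in [0,n-2]$ (whose maximum is attained at the endpoint $\lambda_1=n-2$) produces an upper bound of the form $8n^3 - c_2 n^2 + O(n)$ with $c_2 > 48$, so the difference with the RHS is a positive quadratic in $n$ for $n \ge 15$. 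In Case 2 ($2 \le i \le k$), the constraints $i\lambda_i \le n$ and $i \ge 2$ give $\lambda_i \le n/2$, which keeps $\tfrac{1}{2}\Delta p_3 \le n^3$ and forces every remaining term to be $O(n^3)$ with coefficient strictly less than $8$; bounding $p_1$ with Lemma \ref{lem:p1}(2) where it appears with a negative coefficient yields the desired inequality. In Case 3 ($i=k+1$, $\lambda_i=0$), all cubic terms in $\lambda_i$ disappear and $\Delta p_3$ collapses to $-2i^3 + 9i^2 - 15i + 9$; again the only potentially large positive contribution comes from $-2p_1$, which Lemma \ref{lem:p1}(2) bounds by $2(n^2-2n)$, giving an $O(n^3)$ bound with leading coefficient well below $8$.

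Once the inductive step is established, the spectral gap formula follows routinely: Lemma \ref{lem:degree} gives $v_{[4,1^{n-4}]} = \phi^{[n]}_{[4,1^{n-4}]} = 2n^4 - 12n^3 + 22n^2 - 12n$, and Lemma \ref{lem:foreig}, together with $r_1([4,1^{n-4}]) = n-4$, yields $\phi^{[n-1,1]}_{[4,1^{n-4}]} = 2n^4 - 20n^3 + 66n^2 - 80n + 24$; subtraction gives $8n^3 - 44n^2 + 68n - 24$.

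The main obstacle will be Case 1. Both $\Delta E_{[4,1^{n-4}]}(\lambda)$ at $\lambda_1=n-1$ and at $\lambda_1 \le n-2$ share the leading cubic $8\lambda_1^3$, so the gap between them is only $O(n^2)$; consequently the bookkeeping of the lower-order terms, particularly the contribution from $-2p_1$, must be performed carefully so that the quadratic slack remains positive starting at $n=15$. The other two cases have comfortable slack because the leading coefficient of $\Delta E$ is strictly less than $8$ once $i \ge 2$ or $\lambda_i = 0$, so they should be routine once the formula for $\Delta E_{[4,1^{n-4}]}$ is in hand.
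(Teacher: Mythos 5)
Your proposal follows the paper's proof essentially verbatim: the same reduction via Lemma \ref{induction_lemma} and the verified base case $n\le 15$, the same expression for $\Delta E_{[4,1^{n-4}]}(\lambda)$ assembled from Table \ref{Tab:FormulaeDelta}, the same three-case analysis using $\lambda_1\le n-2$, $i\lambda_i\le n$, $\lambda_i\le n/2$, $k\le n$ and Lemma \ref{lem:p1}(2) to control the $-2p_1$ term, and the same computation of the eigenvalue and spectral gap via Lemmas \ref{lem:degree} and \ref{lem:foreig}. The only divergence is cosmetic: in Case 1 the paper discards the negative terms $-12\lambda_1^2+(8-8n)\lambda_1$ and is therefore forced to treat $\lambda_1=n-2$ as a separate sub-case with exact values of $p_1$, whereas you retain those terms and maximize the full cubic at the endpoint $\lambda_1=n-2$, which yields the bound $8n^3-66n^2+170n-128\le 8n^3-48n^2+80n-32$ and works just as well.
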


\begin{proof} We have already observed, after Conjecture \ref{conj:main}, that the claim about the second largest eigenvalue is valid for all $n \leq 15$. Consequently, Lemma \ref{induction_lemma} implies that the proof reduces to establishing
\begin{align*}
    \Delta E_{[4,1^{n-4}]}(\lambda)\leq E_{[4,1^{n-3}]}([n,1])-E_{[4,1^{n-4}]}([n-1,1])
\end{align*}
for all $n\geq 15$ and for every partition $\lambda$ of $n$ with $\lambda\neq [n]$. This time the base symmetric function provides 
\begin{align*}
\phi_{[4,1^{n-4}]}^{\lambda}= E_{[4,1^{n-4}]}(\lambda) =  \frac12 p_3 - \frac94 p_2 + \frac{(11-2t)}{2} p_1 + \frac{(8t^2-23t)}{8}. 
\end{align*}
We have 
\begin{align*}
\Delta(8t^2-23t) =& [8(2n+2)^2-23(2n+2)] - [8(2n)^2 - 23(2n)] \\
=& 64n-14, \\
\Delta((11-2t)p_1) =& [(11-2(2n+2))(p_1 + \Delta p_1)] - [(11-2(2n))p_1] \\ 
=& - 4 p_1 + (7-4n)(\Delta p_1) \\ 
=& -4p_1 + (28\lambda_i -14i +21 -16n\lambda_i +8ni -12n). 
\end{align*}
Therefore, by referring to Table \ref{Tab:FormulaeDelta} for the formula of $\Delta p_3$ and  $\Delta p_2$, we obtain that
\begin{align*}
\Delta E_{[4,1^{n-4}]}(\lambda) =& \frac12 \Delta p_3 - \frac94 \Delta p_2 + \frac12 \Delta((11-2t)p_1) + \frac18 \Delta(8t^2-23t) \\ 
=& \frac12 (-2i^3+12i^2\lambda_i -24i \lambda_i^2 + 16 \lambda_i^3 +9i^2 -36i\lambda_i +36\lambda_i^2 - 15i \\
 &+ 30\lambda_i+9)  - \frac94 ( 2i^2-6i+5-8i\lambda_i + 12 \lambda_i + 8 \lambda_i^2 )+ \frac12 ( -4p_1 + 28\lambda_i   \\
 & -14i  +21 -16n\lambda_i +8ni -12n)+ \frac18 (64n-14).  \\ 
\end{align*}
 \vspace{-1cm}
 
\noindent Simplifying the above equation gives

\begin{equation} \label{eq:E42c1}
\Delta E_{[4,1^{n-4}]}(\lambda) =8\lambda_i^3 -12i \lambda_i^2   +(6i^2-8n+2)\lambda_i -2p_1  -i^3 +4ni  - i   + 2n+2. 
\end{equation}

\noindent Case 1: Suppose $\lambda^+ = [\lambda_1+1,\lambda_2,\dots,\lambda_k]$, so $i=1$.  For the case $\lambda^+ = [n,1]$ we have $\lambda_1 = n-1$ and $p_1 = -1 + (2n-3)(n-1)$. Therefore, we have 
\begin{align*}
    E_{[4,1^{n-3}]}([n,1])-E_{[4,1^{n-4}]}([n-1,1])  = 8n^3-48n^2+80n-32.
\end{align*} 
  
We now show that $\Delta E_{[4,1^{n-4}]}(\lambda) \leqslant 8n^3-48n^2+80n-32$ for all $\lambda \not \in \{[n], [n-1,1]\}$. To do so, we consider two sub-cases. 

\noindent Sub-case 1.1: Let $\lambda_1=n-2$. Then $\lambda \in \{[n-2,2], [n-2, 1,1]\}$. Additionally, we point out that 

 \vspace{-0.5cm}
$$p_1([n-2, 2])=\frac{(2n-4)(2n-5)}{2}+2; ~~~~~p_1([n-2, 1,1])=\frac{(2n-4)(2n-5)}{2}-4.$$

\noindent We now evaluate Equation \ref{eq:E42c1} for each $\lambda \in \{[n-2,2], [n-2, 1,1]\}$ by letting $i=1$:  

 \vspace{-0.2cm}
\begin{align*}
\Delta E_{[4,1^{n-4}]}([n-2,2])&=8n^3-72n^2+192n-152;\\
\Delta E_{[4,1^{n-4}]}([n-2,1,1])&=8n^3-72n^2+192n-140.\\
 \end{align*}
 \vspace{-1cm}
  
\noindent Clearly, for both $\lambda \in \{[n-2,2], [n-2, 1,1,]\}$, $\Delta E_{[4,1^{[n-4]}}(\lambda)\leqslant 8n^3-48n^2+80n-32$ for all $n\geqslant 15$.\\

\noindent Sub-case 1.2: Let $\lambda_1\leqslant n-3$. We derive an upper-bound on $\Delta E_{[4,1^{n-4}]}(\lambda)$ as follows. By Statement 2 of Lemma \ref{lem:p1}, we have $-2p_1\leqslant 2n^2-4n$. Next, we note that $i=1$ and $\lambda_1\leqslant n-3$. Removing all negative terms from Equation \ref{eq:E42c1} and substituting the aforementioned upper-bound gives the following inequality:
\begin{align*}
\Delta E_{[4,1^{n-4}]}(\lambda)\leqslant& 8\lambda_i^3 +(6i^2+2)\lambda_i-2p_1+4ni+2n+2\\
&\leqslant 8(n-3)^3+8(n-3)+(2n^2-4n)+6n+2\\
&\leqslant 8n^3-70n^2+226n-238.
\end{align*}
   \vspace{-0.5cm}  
   
\noindent We see that   $\Delta E_{[4,1^{n-4}]}(\lambda)\leqslant 8n^3-48n^2+80n-32$ for all $n\geqslant 15$ and thus, the claim follows. 
  
\noindent  Case 2: Let $\lambda_i^+=\lambda_i+1$ for $2\leqslant i \leqslant n$. We have the following inequalities: $2 \le i \le k \le n$, $2 \le i \lambda_i \le n-1$, and $1 \le \lambda_i < \frac{n}{2}$.  If $\lambda \not\in \{[n], [n-1,1]\}$, Statement 2 of Lemma \ref{lem:p1} implies that $-2p_1\leqslant 2n^2-4n$.  Furthermore we note that $|6i^2-8n+2|\leqslant 6i^2+8n+2$. From Equation \ref{eq:E42c1}, this means that
\begin{align*}
\Delta E_{[4,1^{n-4}]}(\lambda) &\leqslant 8\lambda_i^3 +(6i^2+8n+2)\lambda_i+(2n^2-4n)+4ni+ 2n+2\\
& \leqslant ~8\left(\frac{n}{2}\right)^3 +(6n^2+8n+2)\left(\frac{n}{2}\right)+ 6n^2-2n+2\\
& \leqslant~ 4n^3+10n^2-n+2.
\end{align*}
   
\noindent Since $4n^3+10n^2-n+2 \leqslant E_{[4,1^{n-3}]}([n,1])-E_{[4,1^{n-4}]}([n-1,1])$ for all $n \ge 15$, the claim follows. \\
 
\noindent Case 3: Let $\lambda^+_{k+1}=1$, that is, $i=k+1$ and $\lambda_k=0$. From Equation \ref{eq:E42c1}, this means that
\begin{align} 
\Delta E_{[4,1^{n-4}]}(\lambda) &= -(k+1)^3 -(k+1) -2 p_1 +4n(k+1) +2n+2 \nonumber \\
& = -k^3 -3k^2 +(4n-4)k  -2p_1 +6n. \label{eq:E42k}
\end{align}

\noindent By removing all negative terms from Equation \ref{eq:E42k} and noting that $k\leqslant n$ and $-2p_1\leqslant 2n^2-4n$, we have that
\begin{align} 
\Delta E_{[4,1^{n-4}]}(\lambda) \leqslant& 4n^2+ (2n^2-4n)+6n  \nonumber \\ 
\leqslant& 6n^2 +2n.  \label{eq:E42k1} 
\end{align}

Inequality \ref{eq:E42k1} implies that $\Delta E_{[4,1^{n-4}]}(\lambda)  \leqslant E_{[4,1^{n-3}]}([n,1])-E_{[4,1^{n-4}]}([n-1,1])$  for all $n \ge 15$. Hence, the statement holds in Case 3 as well. Therefore, we have completed the proof of the first statement. The second largest eigenvalue and spectral gap can then be computed using Lemma \ref{lem:degree} and Lemma \ref{lem:foreig}. \end{proof}

We point out that the spectral gap for $A_{[4,1^{n-4}]} $ is precisely four times the spectral gap for $A_{[2,2,1^{n-4}]}$. In Section \ref{S:gap} below, we generalize this observation by deriving a combinatorial formulae that computes the difference between $\phi^{[n]}_{\mu}$ and $\phi^{[n-1,1]}_{\mu}$ for certain partitions $\mu$.

\begin{theorem} 
\label{thm:E_32}
For all $n \ge 7$, the second largest eigenvalue of $A_{[3,2,1^{n-5}]} $ is 
$$\phi^{[n-1,1]}_{[3,2,1^{n-5}]} = \frac{2}{3}(2n^5-30n^4+165n^3-405n^2+418n-120).$$
The spectral gap of the graph corresponding to $A_{[3,2,1^{n-5}]} $ is 
$$ \phi^{[n]}_{[3,2,1^{n-5}]} - \phi^{[n-1,1]}_{[3,2,1^{n-4}]} = \frac{1}{3}(20n^4-190n^3+610n^2-740n+240).$$
\end{theorem}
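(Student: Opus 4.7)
The plan is to follow the same inductive strategy used for Theorems \ref{thm:E_3}, \ref{thm:E_44}, and \ref{thm:E_8}. Since Conjecture \ref{conj:main} has been verified for all $n \leq 15$, Lemma \ref{induction_lemma} reduces the proof to establishing the increment inequality
$$
\Delta E_{[3,2,1^{n-5}]}(\lambda) \;\leq\; E_{[3,2,1^{n-4}]}([n,1]) - E_{[3,2,1^{n-5}]}([n-1,1])
$$
for all $n \geq 15$ and every partition $\lambda$ of $n$ with $\lambda \neq [n]$. Starting from the closed form
$$
E_{[3,2,1^{n-5}]} = -2p_3 + \tfrac{1}{4}p_1 p_2 + \tfrac{60-t}{8}p_2 - \tfrac{1}{2}p_1^2 + \tfrac{29t-120-t^2}{8}p_1 + \tfrac{116t-47t^2+t^3}{16},
$$
I will compute the three admissible transitions $\lambda \to \lambda^+$ and simplify $\Delta E_{[3,2,1^{n-5}]}(\lambda)$ as a polynomial in $i$, $\lambda_i$, $n$, together with the residual ``memory'' terms $p_1(\lambda)$ and $p_2(\lambda)$. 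The entries of Table \ref{Tab:FormulaeDelta} supply $\Delta p_1, \Delta p_2, \Delta p_3, \Delta p_1^2$ directly, and $\Delta(p_1 p_2) = p_1\,\Delta p_2 + p_2\,\Delta p_1 + \Delta p_1\,\Delta p_2$ together with the coefficient shifts $t \mapsto t+2$ yield the remaining increments.

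The right-hand side of the target inequality will be computed once by substituting $\lambda=[n-1,1]$, using $p_1([n-1,1]) = (2n-3)(n-1) - 1$ and the analogous evaluation of $p_2$; this should give a quartic-in-$n$ target, consistent with the degree of the cubic term $\Delta p_3$ contributing $O(\lambda_i^3)$. For the case split I will mimic the structure of Theorem \ref{thm:E_8}: in Case 1 ($i=1$) I isolate $\lambda=[n-1,1]$ (equality) and $\lambda=[n-2,2]$, $[n-2,1,1]$ as explicit sub-cases, then handle $\lambda_1 \leq n-3$ by bounding $-p_1$ above using Statement~2 of Lemma \ref{lem:p1}. In Case 2 ($i \geq 2$) I use $\lambda_i \leq n/2$ together with $i\lambda_i \leq n$ and $i \leq n$ to cap the polynomial part. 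In Case 3 (new row of size 1) I substitute $i = k+1$, $\lambda_i = 0$ and bound $k \leq n$.

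The genuine new obstacle, compared to the previous theorems, is the presence of the cross term $\tfrac{1}{4}p_1 p_2$ in $E_{[3,2,1^{n-5}]}$. Its increment contributes expressions like $p_2 \Delta p_1$ and $p_1 \Delta p_2$, so to close the inequality I need a usable upper bound on $p_2(\lambda)$ in addition to the bounds on $p_1$ provided by Lemma \ref{lem:p1}. My intended approach is to prove a short companion lemma showing that $p_2$ is monotone in the dominance order on partitions of $n$ (so $p_2(\lambda) \leq p_2([n]) = \tfrac{(2n-1)(2n)(4n-1)}{6}$), and moreover that if $\lambda \neq [n]$ then $p_2(\lambda) \leq p_2([n-1,1]) = \tfrac{(2n-3)(2n-2)(4n-7)}{6} + 1$, mirroring the treatment of $p_1$; both bounds reduce checking to an extremal partition and then to a direct tableau computation. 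Once these bounds are in hand, each case reduces to comparing two explicit polynomials in $n$, and the inequalities become true for all $n \geq 15$ by direct verification, with the strict inequality growing as $O(n^4)$ versus $O(n^3)$ of the slack. Finally, the formulas for $\phi^{[n-1,1]}_{[3,2,1^{n-5}]}$ and the spectral gap $\phi^{[n]}_{[3,2,1^{n-5}]} - \phi^{[n-1,1]}_{[3,2,1^{n-5}]}$ follow by direct substitution into Lemma \ref{lem:degree} and Lemma \ref{lem:foreig}.
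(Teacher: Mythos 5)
Your plan follows the paper's proof of this theorem essentially step for step: the reduction via Lemma \ref{induction_lemma} to the increment inequality for $n\ge 15$, the computation of $\Delta E_{[3,2,1^{n-5}]}$ from Table \ref{Tab:FormulaeDelta} together with the product rule for $\Delta(p_1p_2)$, the three-case split with the sub-cases $\lambda\in\{[n-2,2],[n-2,1,1]\}$ and $\lambda_1\le n-3$ isolated in Case 1, and the final evaluation of the eigenvalue and gap from Lemmas \ref{lem:degree} and \ref{lem:foreig}. You have also correctly identified the one genuinely new ingredient relative to Theorems \ref{thm:E_3}--\ref{thm:E_8}: the cross term $\tfrac14 p_1p_2$ forces you to control $p_2(\lambda)$ and not just $p_1(\lambda)$.

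However, the companion lemma you propose for this purpose is false: $p_2$ is \emph{not} monotone in the dominance order. The content sum $p_1$ is monotone because a dominance-raising move replaces a content $c_1$ by a content $c_2>c_1$; but for $p_2$ the change is $c_2^2-c_1^2=(c_2-c_1)(c_2+c_1)$, which is negative whenever both contents are sufficiently negative. Concretely, $[2,1^{n-2}]\triangleright[1^n]$, yet passing from $2[1^n]=[2^n]$ to $2[2,1^{n-2}]=[4,2^{n-2}]$ deletes the boxes of contents $-(n-1),-(n-2)$ and creates boxes of contents $2,3$, so $p_2$ \emph{decreases} by $(n-1)^2+(n-2)^2-13>0$ for all $n\ge 5$ (e.g.\ $p_2(c(2[1^5]))=45$ while $p_2(c(2[2,1^3]))=33$). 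So "reduce checking to an extremal partition in the dominance order" does not work for $p_2$. The inequality you actually need, namely $p_2(\lambda)\le p_2(c(2[n-2,2]))=\tfrac{(2n-5)(2n-4)(4n-9)}{6}+6$ for $\lambda\notin\{[n],[n-1,1]\}$, is true, but it has to be argued differently — e.g.\ by noting that if $2\lambda$ has $k$ rows then the first row contributes at most $\sum_{j=0}^{2\lambda_1-1}j^2$ with $\lambda_1\le n-k+1$, while the negative contents contribute $O(k^3)$, and optimizing over $k$ after excluding $k=1,2$. (Two smaller slips: $p_2(c(2[n-1,1]))=\tfrac{(2n-3)(2n-2)(4n-5)}{6}+1$, not $\tfrac{(2n-3)(2n-2)(4n-7)}{6}+1$ — this value enters the computation of the right-hand side target, so the error would propagate; and in the sub-case $\lambda_1\le n-3$ the coefficient of $p_1$ is positive for large $\lambda_1$, so you need an upper bound on $p_1$ itself, not on $-p_1$.) With a correct bound on $p_2$ substituted for the false monotonicity claim, the rest of the plan goes through as in the paper.
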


\begin{proof} As already noted, after Conjecture \ref{conj:main} that the statement holds for all $n \leq 15$. Therefore, by Lemma \ref{induction_lemma}, to prove the statement it suffices to show that 
$$
\Delta E_{[3,2,1^{\,n-5}]}(\lambda) 
\;\leq\; 
E_{[3,2,1^{\,n-4}]}([n,1]) - E_{[3,2,1^{\,n-5}]}([n-1,1])
$$
for all $n\geq 15$ and for every partition $\lambda$ of $n$ with $\lambda \neq [n]$. Recall that, the base symmetric function provides 
\begin{align*}
  \phi^\lambda_{[3,2,1^{n-5}]}=& E_{[3,2,1^{-5}]}(\lambda)\\
  =& -2p_3+\frac{1}{4}p_1p_2+\frac{1}{8}(60-t)p_2-\frac{1}{2}p_1^2+\frac{1}{8}(29t-120-t^2)p_1+\\
  &\frac{1}{16}(116t-47t^2+t^3).  
\end{align*}

Refer to Table \ref{Tab:FormulaeDelta} for $\Delta p_3$. $\Delta p_2$, and $\Delta p_1$. Note that
\begin{align*}
\Delta p_1p_2&=32\lambda_i^3+(72-48i)\lambda_i^2+(24i^2-72i+56)\lambda_i-4i^3+18i^2-28i\\
&+15+(4\lambda_i-2i+3)p_2+(8\lambda_i^2+(12-8i)\lambda_i+2i^2-6i+5)p_1;\\
\Delta ((60-t)p_2)&=(58-2n)(2i^2-6i+5-8i\lambda_i+12\lambda_i+8\lambda_i^2)-2p_2; \\
\Delta((29t-120-t^2)p_1)&=(-4n^2+50n-66)(4\lambda_i-2i+3)+(54-8n)p_1; \\
\Delta (116t-47t^2+t^3)&=24n^2-352n+52.  
\end{align*}
We then have
 \begin{align*}
\Delta E_{[3,2,1^{n-5}]}(\lambda)=-2\Delta p_3+\frac{1}{4} \Delta p_1p_2+ \frac{1}{8}\Delta((60-t)p_2)-\frac{1}{2}\Delta p_1^2+\\ \frac{1}{8}\Delta((29t-120-t^2)p_1)+\frac{1}{16}\Delta(116t-47t^2+t^3).
\end{align*}
With the assistance of a computer, we obtain

\begin{multline}\label{eq:E32deltai}
\Delta E_{[3,2,1^{n-5}]}(\lambda)=-24\lambda_i^3+(-2n+36i-4)\lambda_i^2 +(-2n^2+2ni-18i^2+\\ 22n+4i-4)\lambda_i+n^2i-\frac{ni^2}{2}+3i^3-11ni-i^2-\frac{9}{2}n+2i-4+ \frac{1}{2}(2\lambda_i-i+1)p_2+\\\left(2\lambda_i^2-(2i+1)\lambda_i+\frac{1}{2}(i^2+i+10)-n\right)p_1.
\end{multline}
 
\noindent Case 1: Let $\lambda_1^+=\lambda_1+1$, that is, $i=1$. Letting $i=1$ in Equation \ref{eq:E32deltai}, we see that
\begin{multline} \label{eq:E32deltac1}
\Delta E_{[3,2,1^{n-5}]}(\lambda)=-24\lambda_1^3+(32-2n)\lambda_1^2+(-2n^2+24n-18)\lambda_1+n^2-16n\\ +\lambda_1p_2+(2\lambda_1^2-3\lambda_1-n+6)p_1.
\end{multline}
 
 If $\lambda=[n-1,1]$, then $p_1=(n-1)(2n-3)-1$ and $p_2=\frac{(2n-3)(2n-2)(4n-5)}{6}+1$. Substituting these two values along with $\lambda_1=n-1$ into Equation \ref{eq:E32deltac1} implies that
\begin{align}\label{eqn_atb}
E_{[3,2,1^{n-4}]}([n,1])-E_{[3,2,1^{n-5}]}([n-1,1])=\frac{20}{3}n^4-\frac{200}{3}n^3+\frac{670}{3}n^2-\frac{850}{3}n+100.
\end{align} 
Next, we show that $\Delta E_{[3,2,1^{n-5}]}(\lambda)\leq \frac{20}{3}n^4-\frac{200}{3}n^3+\frac{670}{3}n^2-\frac{850}{3}n+100$ whenever $\lambda \not \in \{[n-1,1], [n]\}$ and $n\geq 15$. To do so, we consider two sub-cases. \\
 
 \noindent Sub-case 1.1: Let $\lambda \in \{ [n-2,2], [n-2,1,1]\}$. For both partitions of interest, we have that $\lambda_1=n-2$. First, we note that 
 \begin{align*}
p_1(c([n-2,2])) &=\frac{(2n-4)(2n-5)}{2}+2; \\ 
p_1(c([n-2,1,1])) &=\frac{(2n-4)(2n-5)}{2}-4; \\ 
p_2(c([n-2,2]))&=\frac{(2n-5)(2n-4)(4n-9)}{6}+6; \\ 
p_2(c([n-2,1,1]))&=\frac{(2n-5)(2n-4)(4n-9)}{6}+6.
\end{align*}
Substituting these into Equation \ref{eq:E32deltac1} implies that
\begin{align*}
\Delta E_{[3,2,1^{n-5}]}([n-2,2])&=\frac{20}{3}n^4-\frac{280}{3}n^3+\frac{1384}{3}n^2-\frac{2804}{3}n+644;\\
\Delta E_{[3,2,1^{n-5}]}([n-2,1,1])&=\frac{20}{3}n^4-\frac{280}{3}n^3+\frac{1348}{3}n^2-\frac{2588}{3}n+524.
\end{align*}

Using this, it is not too difficult to verify that $\Delta E_{[3,2,1^{n-5}]}([n-2,2]) \leq\frac{20}{3}n^4-\frac{200}{3}n^3+\frac{670}{3}n^2-\frac{850}{3}n+100$ and  $\Delta E_{[3,2,1^{n-5}]}([n-2,1,1]) \leq\frac{20}{3}n^4-\frac{200}{3}n^3+\frac{670}{3}n^2-\frac{850}{3}n+100$ for all $n\geq 15$.\\
 
\noindent Sub-case 1.2: Let $\lambda_1 \leqslant n-3$. Note that, in this case, $p_1\leqslant \frac{(2n-4)(2n-5)}{2}+2$ for all partitions with $\lambda_1 \leqslant n-3$. Furthermore, we note that, if $1 \leqslant \lambda_1 \leqslant n-3$, then the maximum of $2\lambda_1^2-3\lambda_1$ occurs when $\lambda_1=n-3$. Therefore, we have
\begin{align*}
2\lambda_1^2-3\lambda_1-n+6\leqslant 2(n-3)^2-3(n-3)+6-n=2 n^2 - 16 n + 33.
\end{align*}
Therefore, 
\begin{align*}
(2\lambda_1^2-3\lambda_1-n+6)p_1\leqslant (2 n^2 - 16 n + 33)\left(\frac{(2n-4)(2n-5)}{2}+2\right).    
\end{align*}
Furthermore, we see that $p_2\leqslant \frac{(2n-5)(2n-4)(4n-9)}{6}+6$ when $\lambda \not \in \{[n-1,1], [n]\}$. Removing negative terms in Equation \ref{eq:E32deltac1} and using the inequalities on $p_2$ and $(2\lambda_1^2-3\lambda_1-n+6)p_1$, we have
\begin{multline}\label{eq:E32deltabound}
\Delta E_{[3,2,1^{n-5}]}(\lambda)\leqslant 32\lambda_1^2 +24n \lambda_1+n^2+\lambda_1\left(\frac{(2n-5)(2n-4)(4n-9)}{6}+6\right)\\ + (2 n^2 - 16 n + 33)\left(\frac{(2n-4)(2n-5)}{2}+2\right).
\end{multline}
 
\noindent Substituting $\lambda_1\leq n-3$ in Inequality \ref{eq:E32deltabound} implies that
\begin{align*}
   \Delta E_{[3,2,1^{n-5}]}(\lambda)\leqslant \frac{20}{3}n^4-76n^3+\frac{1156}{3}n^2-898n+756. 
\end{align*}
Using this, it is not too difficult to verify that $\Delta E_{[3,2,1^{n-5}]}(\lambda) \leq\frac{20}{3}n^4-\frac{200}{3}n^3+\frac{670}{3}n^2-\frac{850}{3}n+100$ for all $n\geq 15$. Thus, the claim follows for this case. 
 
\noindent Case 2: Let $\lambda_i^+=\lambda_i+1$ for $2 \leqslant i \leqslant k$. Since $i \geqslant 2$, we have $\lambda_i \leqslant \frac{n}{2}$. We derive an upper-bound on $\Delta E_{[3,2,1^{n-5}]}(\lambda)$ by bounding the individual terms in Equation \ref{eq:E32deltai}. To begin with, by removing all the negative terms from Equation \ref{eq:E32deltai}, we obtain 
\begin{align}\label{eqn_obt}
\Delta E_{[3,2,1^{n-5}]}(\lambda)&\leqslant 36i\lambda_i^2+(2ni+22n+4i)\lambda_i+ n^2i+3i^3+2i \nonumber\\ 
&+\frac{1}{2}(2\lambda_i-i+1)p_2+\left(2\lambda_i^2-(2i+1)\lambda_i+\frac{1}{2}(i^2+i+10)-n\right)p_1.
\end{align}

\noindent If $\lambda \not\in\{ [n-1,1], [n]\}$, then $|p_1|\leqslant \frac{(2n-5)(2n-4)}{2}+2$. Furthermore, we note that, $2\leq i \leqslant n-2$, $i\lambda_i\leq n$, and $\lambda_i\leqslant \frac{n}{2}$, therefore 
\begin{align*}
\left|2\lambda_i^2-(2i+1)\lambda_i+\frac{1}{2}(i^2+i+10)-n\right|&\leqslant 2\lambda_i^{2}+(2i+1)\lambda_i+\frac{1}{2}(i^2+i+10)+n\\
&\leqslant n^2+2n+6,
\end{align*}
and thus, we have that 
\begin{align*}
\left(2\lambda_i^2-(2i+1)\lambda_i+\frac{1}{2}(i^2+i+10)-n\right)p_1\leqslant (n^2+2n+6)\left(\frac{(2n-5)(2n-4)}{2}+2\right).
\end{align*}

Next, we note that, if $\lambda \not\in\{ [n-1,1], [n]\}$, then $p_2\leqslant \frac{(2n-5)(2n-4)(4n-9)}{6}+6$. Since $p_2>0$ and $i\geqslant 2$, then $(2\lambda_i-i+1)p_2\leqslant (2\lambda_i-1)p_2$. With the assistance of a computer, we can now further refine the upper-bound on $\Delta E_{[3,2,1^{n-5}]}$ from Equation \ref{eqn_obt} as follows
\begin{align*} 
\Delta E_{[3,2,1^{n-5}]}(\lambda)&\leqslant (36i\lambda_i)\lambda_i+2ni\lambda_i+22n\lambda_i+4i\lambda_i+ n^2i+3i^3+2i\\ 
&\ \ \ + \frac{1}{2}(2\lambda_i-1)\left( \frac{(2n-5)(2n-4)(4n-9)}{6}+6\right)\\
&\ \ \ +(n^2+2n+6)\left(\frac{(2n-5)(2n-4)}{2}+2\right)\\
&\leq \frac{10}{3}n^4-\frac{34}{3}n^3+\frac{277}{6}n^2-\frac{121}{6}n+56.
\end{align*}

Using the expression for $E_{[3,2,1^{n-4}]}([n,1])-E_{[3,2,1^{n-5}]}([n-1,1])$ from Equation \ref{eqn_atb}, we note that  $\Delta E_{[3,2,1^{n-5}]}(\lambda)\leqslant E_{[3,2,1^{n-4}]}([n,1])-E_{[3,2,1^{n-5}]}([n-1,1])$ for all $n \geqslant 15$, and thus the claim follows. \\

\noindent Case 3: Let $\lambda^+_{k+1}=1$, that is, $i=k+1$ and $\lambda_k=0$. In Equation \ref{eq:E32deltai}, we let $\lambda_{k}=0$ and $i=k+1$. This gives
\begin{multline}\label{eq:E32k}
\Delta E_{[3,2,1^{n-5}]}(\lambda)=3k^3+\left(8-\frac{n}{2}\right)k^2+(n^2-12n+9)k+n^2-16n\\-\frac{k}{2}p_2+\frac{1}{4}(2k^2+6k+24-4n)p_1.
\end{multline}

Since $k \leqslant n$, this means that $|2k^2+6k+24-4n|\leqslant 2k^2+6k+4n+24\leqslant 2n^2+10n+24$. Furthermore, we note that $|p_1| \leqslant \frac{(2n-5)(2n-4)}{2}+2$. Consequently 

\begin{multline}\label{eq:E32p1}
\frac{1}{4}(2k^2+6k+24-4n)p_1\leqslant \frac{1}{4}(2n^2+10n+24)\left(\frac{(2n-5)(2n-4)}{2}+2\right).
\end{multline}

\noindent Moreover, we note that, if $n \geqslant 12$, then $(n^2-12n+9)>0$. We now remove negative terms from Equation \ref{eq:E32k} and substitute Inequality \ref{eq:E32p1} together with $k \leqslant n$ to obtain
\begin{align*}
\Delta E_{[3,2,1^{n-5}]}(\lambda)\leqslant&~~ 3k^3+8k^2+(n^2-12n+9)k+n^2+\\ 
&~~\frac{1}{4}(2n^2+10n+24)\left(\frac{(2n-5)(2n-4)}{2}+2\right)\\
\leqslant&~~ n^4+\frac{9}{2}n^3-\frac{15}{2}n^2-15n+72.
\end{align*}

Lastly, from Equation \ref{eqn_atb}, we see that $ \Delta E_{[3,2,1^{n-5}]}(\lambda)\leq E_{[3,2,1^{n-4}]}([n,1])-E_{[3,2,1^{n-5}]}([n-1,1])$ for all $n \geqslant 15$, and thus, the claim follows. Therefore, we have completed the proof of the first statement. The second largest eigenvalue and the spectral gap can then be computed using Lemma \ref{lem:degree} and Lemma \ref{lem:foreig}.  \end{proof}

\begin{theorem} 
\label{thm:E_5}
For all $n \ge 6$, the second largest eigenvalue of $A_{[5,1^{n-5}]} $ is 
$$\phi^{[n-1,1]}_{[5,1^{n-5}]} = \frac{8}{5}(2n^5-30n^4+165n^3-405n^2+418n-120).$$
The spectral gap of the graph corresponding to $A_{[5,1^{n-5}]} $ is 
$$ \phi^{[n]}_{[5,1^{n-5}]} - \phi^{[n-1,1]}_{[5,5^{n-5}]} = 16n^4-152n^3+488n^2-592n+192.$$
\end{theorem}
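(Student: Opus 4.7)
The plan is to mirror the inductive strategy used in the proofs of Theorems~\ref{thm:E_3}, \ref{thm:E_44}, \ref{thm:E_8}, and \ref{thm:E_32}. Since Conjecture~\ref{conj:main} has already been verified computationally for all $n\leq 15$, Lemma~\ref{induction_lemma} reduces the problem to establishing
$$
\Delta E_{[5,1^{n-5}]}(\lambda) \;\leq\; E_{[5,1^{n-4}]}([n,1]) - E_{[5,1^{n-5}]}([n-1,1])
$$
for every $n\geq 15$ and every partition $\lambda\vdash n$ with $\lambda\neq [n]$. The first task is to derive a usable formula for $\Delta E_{[5,1^{n-5}]}(\lambda)$ starting from the expression in Table~\ref{Tab:Formulae}. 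Since this formula involves $p_4$, which does not appear in Table~\ref{Tab:FormulaeDelta}, I would compute $\Delta p_4$ from the observation that the two new boxes added at row $i$ have contents $2\lambda_i-i+1$ and $2\lambda_i-i+2$, yielding $\Delta p_4 = (2\lambda_i-i+1)^4 + (2\lambda_i-i+2)^4$. Combining this with the already-tabulated $\Delta p_1,\Delta p_2, \Delta p_3, \Delta p_1^2$ and the routine differences $\Delta\bigl((40-3t)p_2/2\bigr)$, $\Delta\bigl((7t-34)p_1\bigr)$, and $\Delta\bigl((217t-96t^2+5t^3)/12\bigr)$ then gives $\Delta E_{[5,1^{n-5}]}(\lambda)$ as an explicit polynomial in $\lambda_i,i,n,p_1,p_2,p_3$.

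Next, I would split the verification into the standard three cases depending on which admissible transformation $\lambda\to\lambda^+$ is applied. In Case~1 ($i=1$), substituting $\lambda_1=n-1$ together with the closed-form evaluations of $p_1,p_2,p_3$ at $c([n-1,1])$ produces the right-hand side of the target inequality. As in the proof of Theorem~\ref{thm:E_32}, Case~1 then splits into Sub-case~1.1 ($\lambda\in\{[n-2,2],[n-2,1,1]\}$, handled by direct substitution) and Sub-case~1.2 ($\lambda_1\leq n-3$), where I would apply the monotonicity bound on $p_1$ from Lemma~\ref{lem:p1} together with the extremal bounds $p_2\leq p_2([n-2,2])$ and $|p_3|\leq \max\bigl\{|p_3([n-2,2])|,|p_3([n-2,1,1])|\bigr\}$ to obtain a polynomial upper bound of order $n^4$ with leading coefficient strictly below the leading coefficient $16$ of the right-hand side. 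In Cases~2 and~3, the inequalities $\lambda_i\leq n/2$ and $i\lambda_i\leq n$ force $\Delta p_4 \leq 2n^4 + O(n^3)$, while Lemma~\ref{lem:p1} controls $|p_1|$ and analogous extremal inequalities control $|p_2|$ and $|p_3|$, so the resulting upper bound on $\Delta E_{[5,1^{n-5}]}(\lambda)$ will be $O(n^4)$ with a much smaller leading constant and will be dominated by the right-hand side for all $n\geq 15$.

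The main obstacle is the new $\tfrac{1}{2}\Delta p_4$ term, which has leading behaviour $16\lambda_i^4$ and is precisely the source of the $16n^4$ leading term in the claimed spectral gap. In Case~1 with $\lambda_1=n-1$ this contribution matches the right-hand side exactly, so the most delicate step is Sub-case~1.2, where the degree-four contributions arising from $\tfrac{1}{2}\Delta p_4$, from $-\Delta(p_1^2)$, and from the $p_3$ evaluation must be controlled simultaneously so that plugging in $\lambda_1\leq n-3$ produces at least an $O(n^3)$ deficit against the right-hand side, enough to absorb the accumulated lower-order terms coming from the $p_2$ and $p_1$ bounds. Once the inductive inequality is verified in all three cases, the closed-form expression for $\phi^{[n-1,1]}_{[5,1^{n-5}]}$ and the claimed spectral gap $16n^4-152n^3+488n^2-592n+192$ follow immediately from direct evaluation via Lemmas~\ref{lem:degree} and~\ref{lem:foreig}.
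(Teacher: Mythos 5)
Your overall strategy coincides with the paper's proof: reduce via Lemma~\ref{induction_lemma} to the inductive inequality, compute $\Delta p_4=(2\lambda_i-i+1)^4+(2\lambda_i-i+2)^4$ and assemble $\Delta E_{[5,1^{n-5}]}(\lambda)$ from Table~\ref{Tab:FormulaeDelta}, split into the three transformation cases, and finish with Lemmas~\ref{lem:degree} and~\ref{lem:foreig}. One small correction first: since the coefficient of $p_3$ in $E_{[5,1^{n-5}]}$ is the constant $-4$ (not a polynomial in $t$), only $\Delta p_3$ — a polynomial in $i$ and $\lambda_i$ — enters $\Delta E$, so no bound on $p_3(\lambda)$ is ever needed; the resulting expression involves only $p_1$ and $p_2$, namely a term $(8-8\lambda_i+4i)p_1-3p_2$ plus a polynomial in $i,\lambda_i,n$.

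The substantive concern is your Case~1, Sub-case~1.2. Setting $i=1$ gives
$\Delta E_{[5,1^{n-5}]}(\lambda)=16\lambda_1^4-48\lambda_1^3+(84-24n)\lambda_1^2+(44n-40)\lambda_1+(12-8\lambda_1)p_1-3p_2+10n^2-43n$,
and the right-hand side of the target inequality is $16n^4-160n^3+536n^2-680n+240$. With $\lambda_1\leq n-3$ the dominant term $16\lambda_1^4$ already costs $16n^4-192n^3+O(n^2)$, leaving only about $32n^3$ of slack. If you treat $(12-8\lambda_1)p_1$ by absolute values, as your plan suggests, it contributes up to $(8\lambda_1+12)\lvert p_1\rvert\approx 16n^3$, and after adding the remaining positive $O(n^2)$ terms the inequality fails for $n$ between $15$ and roughly $18$ — exactly the range you cannot afford to lose, since the base case is $n=15$. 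The paper avoids this by splitting Case~1 on $\lambda_1\leq n/2$ versus $\lambda_1>n/2$ instead of on $\lambda_1=n-2$ versus $\lambda_1\leq n-3$: when $\lambda_1>n/2$, Lemma~\ref{lem:p1}(1) gives $p_1>0$ while $12-8\lambda_1<0$, so the troublesome term is negative and can simply be discarded; when $\lambda_1\leq n/2$ the quartic term is at most $n^4$ and there is ample room. Your sketch is salvageable (e.g., by also retaining the $-48\lambda_1^3$ term when $\lambda_1$ is close to $n-3$), but as written the claimed ``$O(n^3)$ deficit'' is not established, and your remark that the upper bound has leading coefficient ``strictly below $16$'' is incorrect in this sub-case — the leading coefficients agree and the entire comparison happens at order $n^3$.
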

\begin{proof} We have already observed, after Conjecture \ref{conj:main}, that the claim about the second largest eigenvalue is valid for all $n \leq 15$. Consequently, Lemma \ref{induction_lemma} implies that the proof reduces to establishing
\begin{align*}
    \Delta E_{[5,1^{n-5}]}(\lambda)\leq E_{[5,1^{n-4}]}([n,1])-E_{[5,1^{n-5}]}([n-1,1])
\end{align*}
for all $n\geq 15$ and for every partition $\lambda$ of $n$ with $\lambda\neq [n]$.
Recall that {the symmetric function provides}
\begin{align*}
\phi^\lambda_{[5,1^{n-5}]} &= E_{[5,1^{n-5}]}(\lambda)\\
&=\frac{1}{2}p_4 -4p_3+\frac{1}{2}(40-3t)p_2-p_1^2+(7t-34)p_1+\frac{1}{12}(217t-96t^2+5t^3).   \end{align*}
Refer to Table \ref{Tab:FormulaeDelta} for $\Delta p_2$ and $\Delta p_1$, and thus, we have that
\begin{align*}
\Delta p_4&=(-(i-1)+2\lambda_i)^4+(-(i-1)+2\lambda_i+1)^4;\\ 
\Delta ((40-3t)p_2)&=(34-6n)(2i^2-6i+5-8i\lambda_i+12\lambda_i+8\lambda_i^2)-6p_2; \\
\Delta ((7t-34)p_1)&=(14n-20)(4\lambda_i-2i+3)+14p_1; \\
\Delta (217t-96t^2+5t^3)&=120n^2-648n+90.  
\end{align*}
Refer to Table \ref{Tab:FormulaeDelta} for $\Delta p_1^2$ and $\Delta p_3$. We then evaluate:
\begin{align*}
\Delta E_{[5,1^{n-5}]}(\lambda)=&\frac{1}{2}\Delta p_4 -4\Delta p_3+\frac{1}{2}\Delta((40-3t)p_2)-\Delta p_1^2+
\Delta((7t-34)p_1)\\
&+\frac{1}{12}\Delta (217t-96t^2+5t^3).
\end{align*}
With the assistance of a computer, we obtain
\begin{multline}\label{eq:E5deltai}
\Delta E_{[5,1^{n-5}]}(\lambda)=16\lambda_i^4-(16+32i)\lambda_i^3+(24i^2+24i-24n+36)\lambda_i^2\\+(-8i^3+24ni+20n-12i^2-36i+16)\lambda_i+(8-8\lambda_i+4i)p_1-3p_2+\\i^4+2i^3+(9-6n)i^2+(-10n-8)i+10n^2-27n- 4.
\end{multline}

\noindent Case 1: Let $\lambda_1^+=\lambda_1+1$, that is, $i=1$. Letting $i=1$ in Equation \ref{eq:E5deltai}, we get
\begin{multline}\label{eq:E5delta}
\Delta E_{[5,1^{n-5}]}(\lambda)=16\lambda_1^4-48\lambda_1^3+(84-24n)\lambda_1^2+(44n-40)\lambda_1 \\ +(12-8\lambda_1)p_1-3p_2+10n^2-43n.
\end{multline}
 
If $\lambda=[n-1,1]$, then $p_1=\frac{(2n-2)(2n-3)}{2}-1$ and $p_2=\frac{(2n-3)(2n-2)(4n-5)}{6}+1$. Substituting these two values along with $\lambda_1=n-1$ into Equation \ref{eq:E5delta}
\begin{align}\label{eqn_gbt}
   E_{[5,1^{n-4}]}([n,1])-E_{[5,1^{n-5}]}([n-1,1])=16n^4-160n^3+536n^2-680n+240. 
\end{align} 
Our next step is to show that $\Delta E_{[5,1^{n-5}]}(\lambda)\leq E_{[5,1^{n-4}]}([n,1])-E_{[5,1^{n-5}]}([n-1,1])$ whenever $\lambda \not \in \{[n-1,1], [n]\}$ and $n\geq 15$. To do so, we consider two sub-cases: $\lambda_1\leq \frac{n}{2}$ and $\lambda_1> \frac{n}{2}$.\\
 
\noindent Sub-case 1.1: Let $ \lambda_1 \leqslant \frac{n}{2}$. Since $\lambda \not \in \{[n-1,1], [n]\}$, then $|p_1|\leqslant \frac{(2n-4)(2n-5)}{2}+2$. Furthermore, we note that $|12-8\lambda_1|\leqslant 12+4n$. This means that  
\begin{align*}
(12-8\lambda_1)p_1\leqslant (12+4n)\left(\frac{(2n-4)(2n-5)}{2}+2\right).
\end{align*} 
\noindent  Removing the negative terms from Equation \ref{eq:E5delta} and substituting the upper-bound for $(12-8\lambda_1)p_1$ into Equation \ref{eq:E5delta}, we obtain the following upper-bound:
\begin{align}\label{eq:E5final1}
\Delta E_{[5,1^{n-5}]}(\lambda)\leqslant16\lambda_1^4+84\lambda_1^2+ 44n\lambda_1+
(12+4n)\left(\frac{(2n-4)(2n-5)}{2}+2\right)+10n^2.
\end{align}
Next, we substitute $\lambda_1 \leq\frac{n}{2}$ into Inequality \ref{eq:E5final1} to obtain the following inequality:
\begin{align*}
\Delta E_{[5,1^{n-5}]}(\lambda)\leq n^4+8n^3+41n^2-60n+144.    
\end{align*} 
Using this and Equation \ref{eqn_gbt}, one can verify that $\Delta E_{[5,1^{n-5}]}(\lambda)\leq E_{[5,1^{n-4}]}([n,1])-E_{[5,1^{n-5}]}([n-1,1])$ for all $n\geq 15$.

\noindent Sub-case 1.2: Let $\lambda_1>\frac{n}{2}$. Lemma \ref{lem:p1} implies that $p_1(\lambda)>0$. This means that $(12-8\lambda_1)p_1(\lambda)<0$. We proceed by removing the terms involving $p_1$ and $p_2$ from Inequality \ref{eq:E5delta} to obtain:
\begin{equation}\label{eq:E5final2}
\Delta E_{[5,1^{n-5}]}(\lambda) \leqslant 16\lambda_1^4-48\lambda_1^3+(84-24n)\lambda_1^2+(44n-40)\lambda_1 +10n^2-43n.
 \end{equation}
 
If $\lambda \in \{[n-2,1,1], [n-2,2]\}$, then substituting $\lambda_1= n-2$ into Inequality  \ref{eq:E5final2} implies:
\begin{align*}
\Delta E_{[5,1^{n-5}]}(\lambda) \leqslant 16n^4-200n^3+906n^2-1691n+1056.
\end{align*} 
\noindent Since $16n^4-200n^3+906n^2-1691n+1056\leqslant E_{[5,1^{n-4}]}([n,1])-E_{[5,1^{n-5}]}([n-1,1])$ for all $n \geqslant 15$, the statement follows for $\lambda_1=n-2$. 
 
If $\lambda_1 \leqslant n-3$, we remove all the negative terms from Inequality \ref{eq:E5final2}. Noting that $(84-24n)<0$. Therefore, 
\begin{align*}
\Delta E_{[5,1^{n-5}]}(\lambda) \leqslant 16\lambda_1^4+44n\lambda_1+10n^2.
\end{align*}
We can substitute $\lambda_1 \leq n-3$ in the above inequality, thereby yielding
\begin{align*}
\Delta E_{[5,1^{n-5}]}(\lambda) \leqslant 16n^4-192n^3+918n^2-1860n+1296.
\end{align*}

Consequently, using this and Equation \ref{eqn_gbt}, one can verify that $\Delta E_{[5,1^{n-5}]}(\lambda)\leq E_{[5,1^{n-4}]}([n,1])-E_{[5,1^{n-5}]}([n-1,1])$ for all $n\geq 15$, and thus the claim follows.
 
\noindent Case 2: Let $\lambda_i^+=\lambda_i+1$ for $i\geqslant 2$. First, we remove all the negative terms from Equation \ref{eq:E5deltai}. Then, we derive an upper-bound on $\Delta E_{[5,1^{n-5}]}(\lambda)$ by bounding the non-negative individual terms in Equation \ref{eq:E5deltai}
\begin{align*}
\Delta E_{[5,1^{n-5}]}(\lambda)\leqslant& 16\lambda_i^4+ (24i^2+24i+36)\lambda_i^2+ (24ni+20n+16)\lambda_i+\\ 
&(8-8\lambda_i+4i)p_1+i^4+2i^3+9i^2+10n^2.
\end{align*}

We note that $2\leq i \leq n-2$, $i\lambda_i\leq n$, $\lambda_i\leq \frac{n}{2}$, and $|p_1|\leq \frac{(2n-3)(2n-2)}{2}+2$. Therefore,

\begin{align*}
    (8-8\lambda_i+4i)p_1 \leq (8+4n+4i)\left(\frac{(2n-3)(2n-2)}{2}+2\right).
\end{align*}

We can then further refine and rearrange the above bound for $\Delta E_{[5,1^{n-5}]}(\lambda_1)$ as follows:

\begin{align} \label{eq:inequE5}
\Delta E_{[5,1^{n-5}]}(\lambda)\leqslant& 16\lambda_i^4+ 24(i\lambda_i)^2+24(i\lambda_i)\lambda_i+36\lambda_i^2+ 24n(i\lambda_i)+(20n+16)\lambda_i+\nonumber \\ 
&(8+4n+4i)\left(\frac{(2n-3)(2n-2)}{2}+2\right)+i^4+2i^3+9i^2+10n^2.
\end{align}

Therefore, with the aid of a computer, we obtain
\begin{align*}
 \Delta E_{[5,1^{n-5}]}(\lambda)\leq 2n^4+10n^3+70n^2+4n+36.
\end{align*}

Using the previous inequality and Equation \ref{eqn_gbt}, we note that  $\Delta E_{[5,1^{n-5}]}(\lambda)\leqslant E_{[5,1^{n-4}]}([n,1])-E_{[5,1^{n-5}]}([n-1,1])$ for all $n \geqslant 15$, and thus the claim follows. 

\noindent Case 3: Letting $i=k+1$ and $\lambda_{k+1}=0$ in Equation \ref{eq:E5deltai}, we obtain
\begin{multline} \label{eq:E5k}
\Delta E_{[5,1^{n-5}]}(\lambda)=k^4+6k^3+(21-6n)k^2+(20-22n)k+10n^2\\-43n-3p_2+(12+4k)p_1.
\end{multline}

\noindent Removing all the negative terms and using the upper bound on $|p_1|$ in Inequality \ref{eq:E5k}, we have the following inequality:
\vspace{-3mm}
\begin{multline} \label{eq:inequE53}
\Delta E_{[5,1^{n-5}]}(\lambda)\leqslant k^4+6k^3+21k^2+20k+10n^2\\+(12+4k)\left(\frac{(2n-3)(2n-2)}{2}+2\right).
\end{multline}
Substituting $k\leq n$ into Inequality \ref{eq:inequE53} implies that
\begin{align*}
    \Delta E_{[5,1^{n-5}]}(\lambda)\leqslant n^4+14n^3+35n^2-20n+60.
\end{align*}

\noindent In conclusion, by using Equation \ref{eqn_gbt} we see that $\Delta E_{[5,1^{n-5}]}(\lambda)< E_{[5,1^{n-4}]}([n,1]) - E_{[5,1^{n-5}]}([n-1,1])$ for $n \geqslant 15$, and thus, the claim follows. Therefore, we have completed the proof of the first statement. The second largest eigenvalue and the spectral gap can then be computed using Lemma \ref{lem:degree} and Lemma \ref{lem:foreig}. \end{proof}

\section{Relationship between conjectured spectral gaps of certain matrices}\label{S:gap}

As stated in the introduction of this paper, the spectral gap of a graph is of particular interest.  The aim of this section is to further investigate the spectral gap of certain graphs in $\mathcal{A}_{2n}$. We write $g_{\mu}$ to denote the spectral gap of $X_{\mu}$. Since all eigenvalues of $X_{\mu}$ are integers, it follows that $g_{\mu}$ is itself an integer. If $\mu$ and $\mu'$ are two partitions of $2n$, then it is clear that there exists a $C \in \mathds{Q}$ such that $g_{\mu}=Cg_{\mu'}$. If $\mu'$ is obtained from $\mu$ by merging two parts of order at least two in an admissible way, we can derive a combinatorial expression for $C$ in terms of the sizes of the two parts being merged and their multiplicities. Therefore, we can generalize the observation made below the proof of Theorem \ref{thm:E_8} when $\mu$ is a partition with at least one part of size 1. Lastly, we compute the conjectured spectral gap of all associates in $\mathcal{A}_{2n}$ indexed by hook partitions by strategically constructing an equitable partition of $X_\mu$.   

Let $P$ be a perfect matching of $K_{2n}$ and $\sigma\in S_{2n}$. Recall that  $\sigma P$ is the perfect matching obtained by applying $\sigma$ to $V(K_{2n})$. If the edge $\{a,b\} \in P$, then the edge $\{\sigma(a), \sigma(b) \} \in \sigma P$. Lastly, the set of perfect matchings of $K_{2n}$ containing the edge $\{1,2\}$ is denoted $PM_{1,2}$.

We will start with the following two partitions of $n$:
$$
\mu = [\mu_1, \mu_2, \dots,\mu_i, \dots, \mu_j, \ldots,  \mu_\ell];~~~
\mu' = [\mu_1, \mu_2, \dots,  \mu_i + \mu_j, \ldots, \mu_\ell].
$$

Observe that $\mu'$ is obtained by merging two parts of $\mu$ and thus $\mu' \trianglerighteq \mu$. Let $n_i$ denote the number of parts in $\mu$ equal to $\mu_i$, $n_j$ denote the number of parts in $\mu$ equal to $\mu_j$, and $m$ denote the number of parts in $\mu'$ equal to $\mu_i + \mu_j$. Define
\begin{align}\label{eqn_Cij}
C_{i, j}(\mu)  = 
\begin{cases}
  \frac{ n_i (n_i-1) \mu_i}{  2  m  }       & \quad \textrm{if }   \mu_i = \mu_j;  \\ 
 \frac{n_i n_j \mu_i \mu_j }{m ( \mu_i + \mu_j) }    & \quad \textrm{otherwise.}
\end{cases}
\end{align}

In the proof of Lemma \ref{lem:DegreeRatio}, we use simple counting arguments to show that $v_{\mu'}=C_{i, j}(\mu) v_{\mu}$ when $\mu_i, \mu_j  > 1$. Our proof is structured as follows. Starting with a perfect matching $P$, we consider a specific neighbour of $P$ in $X_\mu$ which we call $Q$. It follows that $P \cup Q$ will contain a cycle of length $\mu_i$ and $\mu_j$. We then count the neighbours of $P$ in $X_{\mu'}$ that can be obtained from $Q$ by combining the $\mu_i$-cycle and $\mu_j$-cycle to form a $(\mu_i+\mu_j)$-cycle. This will  count each of the neighbours of $P$ in $X_{\mu'}$ $K$ times. We then compute the factor $K$ by which we overcount to derive $v_{\mu'}$ in terms of $v_{\mu}$. 

\begin{lemma}\label{lem:DegreeRatio}
Let $\mu = [\mu_1, \mu_2, \dots,\mu_i, \dots, \mu_j, \dots,  \mu_\ell]$,
and $\mu' = [\mu_1, \mu_2, \dots,\mu_i + \mu_j, \dots,  \mu_\ell]$ such that $\mu_i, \mu_j  > 1$. Then  $v_{\mu'} = C_{i, j}(\mu)  v_{\mu}$.
\end{lemma}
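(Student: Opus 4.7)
The plan is to establish the identity by a double-counting argument over ordered quadruples $(Q, C, D, Q')$. Here $P$ is the fixed perfect matching, $Q$ is a perfect matching $\mu$-related to $P$, the pair $(C, D)$ consists of distinct cycles of $P \cup Q$ with $|C| = 2\mu_i$ and $|D| = 2\mu_j$, and $Q'$ is a perfect matching $\mu'$-related to $P$ obtained from $Q$ by a local merge of $C$ and $D$: remove one $Q$-edge from $C$ and one from $D$, then reinsert two edges on the resulting four-vertex set so that in $P \cup Q'$ the cycles $C$ and $D$ are replaced by a single $2(\mu_i+\mu_j)$-cycle, while all other cycles of $P \cup Q$ are preserved.

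First I would count from the $Q$-side. There are $v_\mu$ choices for $Q$, then $n_i n_j$ (respectively $n_i(n_i-1)$) ordered pairs $(C, D)$ when $\mu_i \ne \mu_j$ (respectively $\mu_i = \mu_j$), and then $\mu_i \mu_j$ choices of a $Q$-edge $e_C \in C$ and $e_D \in D$. Given these, the four endpoints $V(e_C) \cup V(e_D)$ admit three perfect matchings on themselves: one simply restores the original edges, and a short verification using the alternating $P$/$Q$-structure along $C$ and $D$ shows that each of the remaining two reinsertions yields a valid $Q'$ whose union with $P$ contains a single $2(\mu_i+\mu_j)$-cycle on $V(C) \cup V(D)$, contributing a further factor of $2$.

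Next I would count from the $Q'$-side. For each $Q'$ there are $m$ cycles of length $2(\mu_i+\mu_j)$ in $P \cup Q'$. For a chosen such cycle $E$, label its $\mu_i+\mu_j$ $Q'$-edges cyclically as $f_1, \ldots, f_{\mu_i+\mu_j}$. A quadruple of the above type corresponds to a choice of two such edges $f_a, f_b$ with $a<b$ to delete; applying the unique rematching of their endpoints that produces two cycles yields cycles of lengths $2(b-a)$ and $2((\mu_i+\mu_j)-(b-a))$, and these have sizes $\{2\mu_i, 2\mu_j\}$ precisely when $b-a \in \{\mu_i, \mu_j\}$. When $\mu_i \ne \mu_j$ this gives $\mu_i+\mu_j$ admissible pairs, each of which determines $(C,D)$ uniquely; when $\mu_i = \mu_j$ there are $\mu_i$ admissible pairs, each producing two cycles of equal length that can be labelled as an ordered pair $(C,D)$ in two ways.

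Equating the two counts then solves for $v_{\mu'}/v_\mu$, and the two cases of the resulting ratio reproduce the two cases in the definition of $C_{i,j}(\mu)$. The main obstacle is the combinatorial verification of the rematching step on both sides: on the forward side, confirming that precisely two of the three rematchings on the four endpoints yield a single merged cycle of the correct length, and on the reverse side, confirming that only the "crossing" rematching splits $E$ into two cycles and that the cyclic-distance condition $b-a \in \{\mu_i, \mu_j\}$ captures exactly the splits that recreate the prescribed sizes. The $\mu_i=\mu_j$ case demands extra care, since the interchangeability of $C$ and $D$ injects a labelling factor that must be tracked consistently on both sides of the count.
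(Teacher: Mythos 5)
Your proposal follows the same merge/split double count as the paper's own proof, and your two local verifications are right: of the three rematchings of the four endpoints, exactly the two ``crossing'' ones merge $C$ and $D$ into a single $2(\mu_i+\mu_j)$-cycle, while on the reverse side exactly one rematching splits the long cycle, and your counts of admissible $Q'$-edge pairs ($\mu_i+\mu_j$ when $\mu_i\neq\mu_j$, and $\mu_i$ when $\mu_i=\mu_j$) are correct. The gap is in your final step: equating the two counts does \emph{not} reproduce $C_{i,j}(\mu)$. For $\mu_i\neq\mu_j$ your tallies give $v_\mu\cdot n_in_j\mu_i\mu_j\cdot 2 = v_{\mu'}\cdot m(\mu_i+\mu_j)\cdot 1$, i.e.\ $v_{\mu'}=\frac{2n_in_j\mu_i\mu_j}{m(\mu_i+\mu_j)}\,v_\mu=2C_{i,j}(\mu)\,v_\mu$, and in the equal case $v_\mu\cdot n_i(n_i-1)\mu_i^2\cdot 2 = v_{\mu'}\cdot m\mu_i\cdot 2$ gives $v_{\mu'}=\frac{n_i(n_i-1)\mu_i}{m}\,v_\mu$, again twice $C_{i,j}(\mu)$. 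The factor of $2$ is forced by the very asymmetry you establish: two merging rematchings forward versus one splitting rematching backward, so it cannot be argued away.

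Before hunting for a compensating slip in your own bookkeeping, test the identity against Lemma \ref{lem:degree} or the appendix tables: for $\mu=[2,2]$, $\mu'=[4]$ one has $v_{[4]}=48=4\,v_{[2,2]}$ while $C_{i,j}([2,2])=2$; for $\mu=[3,2]$, $\mu'=[5]$ one has $v_{[5]}/v_{[3,2]}=384/160=12/5$ while $C_{i,j}([3,2])=6/5$. So your count matches the data, and it is the normalization of $C_{i,j}(\mu)$ in the statement that is off by $2$. The paper's argument conceals this by dividing a forward count of $4n_in_j\mu_i\mu_j$ transpositions per $Q$ (each distinct $Q'$ arising from $2$ of them) by a backward count of $4m(\mu_i+\mu_j)$ tuples (cycle, first edge, direction, transposition) per $Q'$ (each distinct $Q$ arising from $4$ of them); the mismatched per-object multiplicities $2$ versus $4$ absorb exactly your factor of $2$. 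As written, then, your proposal correctly derives a formula that differs from the stated one by a factor of $2$, and the closing assertion that the ratio ``reproduces the two cases in the definition of $C_{i,j}(\mu)$'' is the step that fails; either the constant in the statement must be doubled or a genuine halving must be exhibited, and the numerical evidence says it is the former.
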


\begin{proof}
Let $P$ be a perfect matching in $PM_{1,2}(2n)$. We will describe a way to construct perfect matchings that are $\mu'$-related to $P$ from the perfect matchings that are $\mu$-related. Assume that $P$ and $Q$ are $\mu$-related, then the graph $P \cup Q$ has exactly $\ell$ even cycles of length $2\mu_1, 2\mu_2, \ldots,$ $2\mu_i, \dots, 2\mu_j, \dots,  2\mu_\ell$. We will consider two cases: $\mu_i \neq \mu_j$ and $\mu_i = \mu_j$. \\~

\noindent Case 1: Let $\mu_i \neq \mu_j$. We will first count the number of ways to construct a matching $Q'$ that is $\mu'$-related to $P$ from a $Q$ and then divide this count by the number of distinct perfect matchings $Q$  from which we can construct the perfect matching $Q'$.

We will assume that $Q$ includes the edge $\{a,b\}$ in a cycle of length $\mu_i$ in $P \cup Q$ and the edge $\{c,d \}$ in a cycle of length $\mu_j$ in $P \cup Q$. Let $Q' = \sigma Q$, where $\sigma \in S_{2n}$ such that $\sigma$ is one of four transpositions in the set $\{ (a,c),(a,d),(b,c),(b,d)\}$. The perfect matching $Q'$ has the same set of edges as $Q$ with the exception of  $\{a,b\}$ and $\{c,d\}$, which are replaced by either $\{a,c\}$ and $\{b,d\}$, or $\{a,d\}$ and $\{b,c\}$. See Figure \ref{fig:switch} for an example of this transformation. It follows that $P$ and  $Q'$ are $\mu'$ related since the $\mu_i$ and $\mu_j$ cycles from $P \cup Q$ are now merged to form a $(\mu_i + \mu_j)$-cycle in $P \cup Q'$.

\begin{figure}[htpb]
\begin{subfigure}{0.45\textwidth}
\begin{tikzpicture}[
  very thick,
  every node/.style={circle,draw=black,fill=black!90}, inner sep=2]
 
  \node (n0) at (1.2,2.8) {};
  \node (n1) at (0.0,1.9) {};
  \node (n2) at (2.4,1.9)  [label=above right:$a$]{};
  \node (n3) at (0.0,0.8){};
  \node (n4) at (2.4,0.8)  [label=below right:$b$]{};
  \node (n5) at (1.2,0.0) {};
  \node (n6) at (3.7,1.9) [label= above right:$c$]{};
  \node (n7) at (5.0,1.9) {};
  \node (n8) at (3.7,0.8)  [label=below right:$d$]{};
  \node (n9) at (5.0,0.8) {};

   \path[every node/.style={font=\sffamily\small},  dotted]
   (n4) edge (n2)
   (n0) edge (n1)
   (n3) edge (n5)
      (n7) edge (n9)
   (n8) edge (n6);
   
  \path[every node/.style={font=\sffamily\small}]
   (n1) edge (n3)
   (n0) edge (n2)
   (n5) edge (n4)
   (n9) edge (n8)
   (n7) edge (n6);
 
  %\foreach \from/\to in {n4/n2,n2/n0,n0/n1,n1/n3,n5/n3,n4/n5,n7/n6,n8/n6,n9/n8,n7/n9}
    %\draw (\from) -- (\to);
\end{tikzpicture}
\caption{The perfect matchings $P$ (in solid) and $Q$ (in dotted). }
\end{subfigure}
\hfill
\begin{subfigure}{0.45\textwidth}
\begin{tikzpicture}[
  very thick,
  every node/.style={circle,draw=black,fill=black!90}, inner sep=2]
 
  \node (n0) at (1.2,2.8) {};
  \node (n1) at (0.0,1.9) {};
  \node (n2) at (2.4,1.9)  [label=above right:$a$]{};
  \node (n3) at (0.0,0.8){};
  \node (n4) at (2.4,0.8)  [label=below right:$b$]{};
  \node (n5) at (1.2,0.0) {};
  \node (n6) at (3.7,1.9) [label= above right:$c$]{};
  \node (n7) at (5.0,1.9) {};
  \node (n8) at (3.7,0.8)  [label=below right:$d$]{};
  \node (n9) at (5.0,0.8) {};

   \path[every node/.style={font=\sffamily\small},  dashed]
   (n6) edge (n2)
   (n0) edge (n1)
   (n3) edge (n5)
    (n7) edge (n9)
   (n8) edge (n4);
   
  \path[every node/.style={font=\sffamily\small}]
   (n1) edge (n3)
   (n0) edge (n2)
   (n5) edge (n4)
   (n9) edge (n8)
   (n7) edge (n6);
\end{tikzpicture}
\caption{The perfect matchings $P$ (in solid) and $Q'$ (in dashed).}
\end{subfigure}
\caption{Illustration of the merging of a $4$-cyle with a $6$-cycle by the action of $\sigma=(a,d)$ on perfect matching $Q$. }
\label{fig:switch}
\end{figure}

Next, we count the number of transpositions $\sigma$ such that $P$ and $\sigma Q$ are $\mu'$-related. There are $n_i n_j$ to select the $2\mu_i$-cycle and $2\mu_j$-cycle from $P \cup Q$ that will be merged to form a $2(\mu_i + \mu_j)$-cycle in $P \cup \sigma Q$. Additionally, there are $\mu_i$ ways to select the edge $\{a,b\}$ from the $2\mu_i$-cycle in $P \cup Q$, and $\mu_j$ ways to select the edge $\{ c, d \}$ from the $2\mu_i$-cycle in $P \cup Q$.  If $\sigma \in \{ (a,c), (a,d), (b,c), (b,d)\}$, the perfect matchings $P$ and $\sigma Q$ are $\mu'$-related. Therefore, there are $4 n_i n_j \mu_i \mu_j$ transpositions $\sigma$ such that $\sigma Q$ and $P$ are $\mu'$-related. Note that $(a,c) Q=(b,d)Q$ and $(a,d)Q=(b,c) Q$, and thus, the above count does not count the number of distinct matchings $Q'$ that arises from this transformation. We remedy this overcount below.  

Given a perfect matching $Q'$ that is $\mu'$-related to $P$, we now count the number of matchings $Q$ that are $\mu$-related to $P$ for which there exists a transposition $\sigma$ such that $Q' = \sigma Q$. We do so by counting the number of transpositions in $S_{2n}$ that transform a $2(\mu_i +\mu_j)$-cycle in $P \cup Q'$ into two cycles of lengths of $2\mu_i$ and  $2\mu_j$. Note that we assume that $\mu_j, \mu_j>1$. 

 There are $m$ ways to select the $2(\mu_i +\mu_j)$-cycle from $P \cup Q'$ that is to be broken. We must remove two edges from this cycle. There are $\mu_i + \mu_j$ ways to select an edge $\{a,c\}$ from $Q'$. The second edge of $Q'$ to be removed, call it $\{b,d\}$, must be at distance of $2\mu_i$ from the first edge and can be selected in one of two ways by moving clockwise or counter-clockwise in the $(\mu_i + \mu_j)$-cycle. Observe that $2\mu_i>2$.  Without loss, of generality, we can assume that $\dist\{a,d\} = \dist\{b,c \}$. This means that $\sigma \in \{ (a,d), (b,c) \}$. In conclusion,  $Q$ has the same edges as $Q'$ with the exception of  $\{a,c\}$ and $\{b,d\}$ which are replaced with $\{a, b\}$ and $\{c,d\}$. Consequently, matchings $P$ and $Q$ are $\mu$-related. Thus $Q'$ can be formed from exactly $ 4 m ( \mu_i + \mu_j )$ perfect matchings $Q$. 

Let $v_{\mu}$ is the number perfect matchings $Q$ that are $\mu$-related to $P$. Dividing the number of perfect matchings $Q'$ that are $\mu'$ related to $P$ by the number of different $Q$ that form a fixed $Q'$ gives rise to the following equality:

$$
v_{\mu'} = \frac{n_i n_j \mu_i \mu_j}{   m ( \mu_i + \mu_j) } v_{\mu}.
$$

\noindent Case 2: Let $\mu_i = \mu_j$. Since our computations are similar to those given in Case 1, we will omit these computations and only point out key differences. Clearly, we have $n_i = n_j$ and $\mu_i + \mu_j = 2 \mu_i$. In this case, there are $\binom{n_i}{2}$ ways of selecting the two $\mu_i$-cycles in $P \cup Q$ that are to be merged. 

Also different from Case 1 is the fact that, when breaking the $2(\mu_i +\mu_j)$-cycle in $P \cup Q'$, there is a unique way to select the second edge once the first edge has been selected. Consequently, $Q'$ can be formed from exactly $ 2 m( \mu_i + \mu_j )$ perfect matchings $Q$. In conclusion, we get the following expression
\begin{align*}
v_{\mu'} = \frac{4 \binom{n_i}{2} \mu_i \mu_j}{  2 m ( \mu_i + \mu_j) } v_{\mu}
=
\frac{  n_i (n_i-1) \mu_i}{  2  m  } v_{\mu}, 
\end{align*}
as claimed.
\end{proof}

We point out that we require $\mu_i, \mu_j  > 1$ because we must be able to break the $2(\mu_i+\mu_j)$-cycle into two cycles of length at least 4.

In Lemma \ref{lem:ratioSecond} below, we prove a statement similar to that of Lemma \ref{lem:DegreeRatio} for the eigenvalue occurring on the $[n-1, 1]$-eigenspace. To do so, we will first construct an equitable partition of $X_\mu$. This equitable partition yields the two eigenvalues of $X_{\mu}$ that occur on the eigenspaces indexed by $[n]$ and $[n-1,1]$. Although these eigenvalues are already known, the goal is to express $\phi^{[n-1, 1]}_\mu$ as a difference of two variables $a_\mu$ and $b_\mu$. We will then show that $a_{\mu'}=C_{i,j}(\mu)a_\mu$ and  $b_{\mu'}=C_{i,j}(\mu)b_\mu$, thereby yielding the desired relation between  $\phi^{[n-1, 1]}_\mu$  and  $\phi^{[n-1, 1]}_{\mu'}$ . 

First, we define an equitable partition of $X_\mu$ by defining the action of a subgroup of $S_{2n}$ such that the orbits of this action partitions the set of perfect matchings in precisely two orbits. Let $P \in \mathcal{M}_{2n}$ and $\sigma \in S_{2n}$. Recall that $S_{2n}$ acts transitively on the set of perfect matchings $\mathcal{M}_{2n}$. However, if we are to consider the action of the stabilizer of the edge $\{1,2\}$, which corresponds to the Young subgroup $S_2 \times S_{n-2}$, then this action has exactly two orbits. The first orbit is comprised of the matchings that contain the edge $\{1,2\}$ which is the set $PM_{1,2}$. The second orbit is the set $\mathcal{M}_{2n} \backslash PM_{1,2}(2n)$ which is the set of all matchings that do not contain the edge $\{1,2\}$. Note that $|PM_{1,2}(2n) | = |\mathcal{M}_{2n}|/(2n-1) = (2n-3)!!$. 

The orbits of the action of $S_2 \times S_{n-2}$ on $V(X_{\mu})$ form an equitable partition of $X_\mu$ and thus, $\pi = \{PM_{1,2}(2n),  \mathcal{M}_{2n} \backslash PM_{1,2}(2n)\}$ is an equitable partition of the graph $X_{\mu}$. Next, we construct the adjacency matrix of $X_\mu / \pi$, which will be a $2 \times 2$ matrix. Recall that $v_{\mu}$ is the degree of $X_\mu$. We let $a_\mu$ be the number of perfect matchings in $PM_{1,2}(2n)$ that are $\mu$-related to a fixed perfect matching in $PM_{1,2}(2n)$  and $b_\mu$ be the number of perfect matchings in $PM_{1,2}(2n)$ that are $\mu$-related to a fixed perfect matching in $\mathcal{M}_{2n} \backslash PM_{1,2}(2n)$. The adjacency matrix of $X_\mu / \pi$ is

\begin{equation} \label{equ:quotient}
Q_\mu(\pi) =
\begin{bmatrix}
a_\mu & v_{\mu}-a_\mu \\
b_\mu & v_{\mu}-b_\mu
\end{bmatrix}.
\end{equation}

 It is straightforward to compute the eigenvalues of $Q_\mu(\pi)$ which are also eigenvalues of $X_\mu$ by Lemma \ref{lem:quotient}. 
 
\begin{prop}\label{prop:evalueQuotient} \cite{GodMeaBook}
The eigenvalues of $Q_\mu$ are $v_{\mu}$ and $a_\mu-b_\mu$. The eigenvalue $v_{\mu}$ occurs on $[n]$-eigenspace, while $a_\mu-b_\mu$ occurs on the $[n-1, 1]$-eigenspace. 
\end{prop} 

Let $\tau_{\mu}$ be the eigenvalue of $X_\mu$ occurring on the $[n-1,1]$-eigenspace. Similarly to Lemma \ref{lem:DegreeRatio}, we can relate $\tau_{\mu}$ and $\tau_{\mu'}$. However,  in this case $\mu'$ must also have a part of size 1 to ensure that $a_\mu \neq 0$. Our proof of Lemma \ref{lem:ratioSecond} here differs from that of Lemma \ref{lem:DegreeRatio} as we make use of the eigenvalues of $X_\mu /\pi$. 

\begin{lemma} \label{lem:ratioSecond}
Let $\mu = [\mu_1, \mu_2, \dots,\mu_i, \dots, \mu_j, \dots,  \mu_\ell]$ 
and $\mu' = [\mu_1, \mu_2, \dots,\mu_i + \mu_j, \dots,  \mu_\ell]$, with $\mu_\ell =1$.
If $i, j <\ell$ and $\mu_i, \mu_j  > 1$
then  $\tau_{\mu'} =  C_{i, j}(\mu) \tau_{\mu}.$
\end{lemma}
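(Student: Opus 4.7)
The plan is to prove $\tau_{\mu'}=C_{i,j}(\mu)\tau_\mu$ by separately establishing the two scalings $a_{\mu'}=C_{i,j}(\mu)a_\mu$ and $b_{\mu'}=C_{i,j}(\mu)b_\mu$. The main ingredient will be the merging/splitting correspondence of Lemma~\ref{lem:DegreeRatio}, adapted to track whether matchings contain the edge $\{1,2\}$. A helpful preliminary observation is that the symmetry of $A_\mu$ forces $|PM_{1,2}(2n)|(v_\mu - a_\mu) = |\mathcal{M}_{2n}\setminus PM_{1,2}(2n)|\,b_\mu$, and since $|\mathcal{M}_{2n}\setminus PM_{1,2}(2n)|=(2n-2)|PM_{1,2}(2n)|$ this collapses to the identity $b_\mu = (v_\mu - a_\mu)/(2n-2)$. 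Hence, once the claim for $a_\mu$ is verified, the claim for $b_\mu$ follows immediately from Lemma~\ref{lem:DegreeRatio}: $b_{\mu'} = (v_{\mu'}-a_{\mu'})/(2n-2) = C_{i,j}(\mu)(v_\mu-a_\mu)/(2n-2) = C_{i,j}(\mu)b_\mu$.

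To establish $a_{\mu'}=C_{i,j}(\mu)a_\mu$, I would re-run the merging argument of Lemma~\ref{lem:DegreeRatio} verbatim, but restrict attention to the subset $PM_{1,2}(2n)$. Fix $P\in PM_{1,2}(2n)$ and let $Q\in PM_{1,2}(2n)$ be $\mu$-related to $P$. Because $\{1,2\}$ appears in both $P$ and $Q$, it forms a length-$2$ cycle in the union $P\cup Q$, corresponding to one of the parts of size $1$ in $\mu$ (at least one such part exists by the hypothesis $\mu_\ell=1$). Since $i,j<\ell$ and $\mu_i,\mu_j>1$, the $2\mu_i$-cycle and $2\mu_j$-cycle chosen for merging each have length at least $4$ and are therefore disjoint from the $2$-cycle at $\{1,2\}$. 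Hence the transposition $\sigma$ used to produce a $\mu'$-related partner $Q'=\sigma Q$ of $P$ does not move the vertices $1$ or $2$, so $\{1,2\}\in Q'$ and $Q'\in PM_{1,2}(2n)$. The same observation applied in reverse shows that the splitting operation on a $\mu'$-neighbor $Q'\in PM_{1,2}(2n)$ of $P$ produces only $\mu$-neighbors $Q\in PM_{1,2}(2n)$. Consequently, all of the local counts from Lemma~\ref{lem:DegreeRatio}---the $4n_in_j\mu_i\mu_j$ valid transpositions per $Q$, the $4m(\mu_i+\mu_j)$ matchings $Q$ per $Q'$, and their $\mu_i=\mu_j$ variant---carry over without change and produce the same ratio $C_{i,j}(\mu)$.

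The only delicate point, and precisely the reason for imposing $i,j<\ell$ and $\mu_i,\mu_j>1$, is to ensure that the $2$-cycle at $\{1,2\}$ is a bystander in the merging/splitting operations. If $\mu_i$ or $\mu_j$ were to equal $1$, the cycle being merged could coincide with the $2$-cycle at $\{1,2\}$, and merging would destroy that edge, breaking the restriction to $PM_{1,2}(2n)$; the analogous failure occurs if $i=\ell$ or $j=\ell$. A direct combinatorial proof of $b_{\mu'}=C_{i,j}(\mu)b_\mu$ would be the natural alternative, but is messier because when $P\notin PM_{1,2}(2n)$ the edge $\{1,2\}$ lies in a cycle of length $\geq 4$ of $P\cup Q$, which might itself be one of the cycles chosen for merging---so the bystander argument fails and one must account for those configurations. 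The identity $b_\mu=(v_\mu-a_\mu)/(2n-2)$ bypasses this difficulty cleanly. Once both scalings are in hand, Proposition~\ref{prop:evalueQuotient} yields $\tau_{\mu'}=a_{\mu'}-b_{\mu'}=C_{i,j}(\mu)(a_\mu-b_\mu)=C_{i,j}(\mu)\tau_\mu$, completing the proof.
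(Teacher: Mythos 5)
Your proof is correct, and for half of the argument it takes a genuinely different route from the paper. The treatment of $a_\mu$ is the same as the paper's: with $P,Q\in PM_{1,2}(2n)$ the edge $\{1,2\}$ forms a $2$-cycle of $P\cup Q$ corresponding to a part of size $1$, so the hypotheses $i,j<\ell$ and $\mu_i,\mu_j>1$ guarantee the merging and splitting transpositions never move the vertices $1$ or $2$, and the counts of Lemma~\ref{lem:DegreeRatio} restrict verbatim to $PM_{1,2}(2n)$. Where you diverge is the $b_\mu$ part: the paper simply asserts that the proof is ``identical except that $P\in\mathcal{M}_{2n}\setminus PM_{1,2}(2n)$ and $Q\in PM_{1,2}(2n)$,'' which is precisely the configuration you flag as delicate --- there $\{1,2\}$ lies in a cycle of length at least $4$ of $P\cup Q$, that cycle may be one of the two selected for merging, and choosing $\{1,2\}$ as the flipped edge ejects $Q'$ from $PM_{1,2}(2n)$; the per-$Q$ count of good transpositions is then no longer uniform. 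Your replacement --- the edge-counting identity $|PM_{1,2}(2n)|\,(v_\mu-a_\mu)=|\mathcal{M}_{2n}\setminus PM_{1,2}(2n)|\,b_\mu$, i.e.\ $b_\mu=(v_\mu-a_\mu)/(2n-2)$, combined with $v_{\mu'}=C_{i,j}(\mu)v_\mu$ from Lemma~\ref{lem:DegreeRatio} and the already-established $a_{\mu'}=C_{i,j}(\mu)a_\mu$ --- is valid, shorter, and arguably repairs a gap in the published argument rather than merely avoiding it. The only cost is that your route leans on Lemma~\ref{lem:DegreeRatio} for the value of $v_{\mu'}/v_\mu$, whereas a fully direct count of $b_{\mu'}$ would be self-contained; that trade is well worth it here.
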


\begin{proof}
 Let $a_{\mu}$ be the number of perfect matchings in $PM_{1,2}(2n)$ that are $\mu$-related to a fixed perfect matching in $PM_{1,2}(2n)$, and $a_{\mu'}$ the number of perfect matchings in $PM_{1,2}(2n)$ that are $\mu'$-related to a fixed perfect matching in $PM_{1,2}(2n)$. Since $\mu_\ell =1$, the perfect matching $P$ is adjacent to at least one perfect matching in $PM_{1,2}(2n)$. This means that $a_{\mu}, a_{\mu'} \neq 0$. We also use $b_{\mu} $ and $b_{\mu'}$ to denote the number of perfect matchings in $\mathcal{M}_{2n} \backslash PM_{1,2}(2n)$ that are $\mu$-related and $\mu'$-related, respectively, to a fixed perfect matching in $PM_{1,2}(2n)$.

Proposition~\ref{prop:evalueQuotient} implies that $\tau_\mu = a_{\mu} - b_{\mu} $ and  $\tau_{\mu'} = a_{\mu'} -b_{\mu'} $. Therefore, it suffices to show that $a_{\mu'} = C_{i, j}(\mu) a_{\mu}$ and $b_{\mu'} = C_{i, j}(\mu) b_{\mu}$. For ease of notation, we write $a=a_{\mu}$,  $a'=a_{\mu'}$,  $b=b_{\mu}$ and $b'=b_{\mu'}$.

Similarly to the proof of Lemma~\ref{lem:DegreeRatio}, we fix a perfect matching $P$. We assume that $P \in PM_{1,2}(2n)$ and let $Q$ be a neighbour of $P$ in $X_\mu$ such that $Q \in  PM_{1,2}(2n)$. We will count the number of perfect matchings $Q'\in PM_{1,2}(2n)$ such that $Q' = \sigma Q$, where $Q'$ is $\mu'$-related to $P$, and $ Q' \in PM_{1,2}(2n)$. As in  Lemma~\ref{lem:DegreeRatio}, we do this by counting the transposition $\sigma$ that merge the $\mu_i$-cycle with the $\mu_j$-cycle in $P \cup Q$. Observe that $P,Q \in PM_{1,2}(2n)$ and thus, both $P$ and $Q$ contain the edge $\{1,2\}$.
 
 As in the proof of Lemma \ref{lem:DegreeRatio}, we partition our proof in two cases.  
 
 \noindent Case 1: Let $\mu_i \neq \mu_j$. We begin by selecting a $2\mu_i$-cycle and $2 \mu_j$-cycle from $P \cup Q$; there are $n_i n_j$ ways to do so. Then, we select an edge from $Q$ from each cycle; there are $\mu_j\mu_j$ ways to do so. Lastly, there are $4$ transpositions $\sigma$ that will flip the edges so that $P$ and $\sigma Q$ are $\mu'$-related.
 Since $i,j < \ell$, the transposition $\sigma$ will fix the edge $\{1,2\}$. Therefore,  $Q' \in PM_{1,2}(2n)$.
 Just as in the proof of Lemma~\ref{lem:DegreeRatio}, for each $Q'$ that is $\mu'$-related to $P$, there are exactly $4 m ( \mu_i + \mu_j )$ pairs of transposition with a perfect matching  with $Q' = \sigma Q$. Therefore, we have that $a' = C_{i, j}(\mu) a$. 
 
Proof of $b' = C_{i, j}(\mu) b$ is identical except that $P \in \mathcal{M}_{2n}\backslash PM_{1,2}(2n)$ and $Q \in PM_{1,2}(2n)$.
 
 \noindent Case 2: Let $\mu_i = \mu_j$. The proof follows as in Case 2 of the proof of Lemma~\ref{lem:DegreeRatio}. \end{proof}

In conclusion, under the right hypothesis, we have a simple formulae relating the spectral gaps of $X_\mu$ and $X_{\mu'}$, which is described in Corollary \ref{cor:gap}. 

\begin{coro} \label{cor:gap}
Let $\mu = [\mu_1, \mu_2, \dots,\mu_i, \dots, \mu_j, \dots,  \mu_\ell]$ 
and $\mu' = [\mu_1, \mu_2, \dots,\mu_i + \mu_j, \dots,  \mu_\ell]$, with $\mu_\ell =1$,  $i, j <\ell$ and $\mu_i, \mu_j  > 1$. If $\phi_\mu^{[n-1,1]}$ and  $\phi
_{\mu'}^{[n-1,1]}$ are the second largest eigenvalue of $X_\mu$ and $X_{\mu'}$, respectively, then $$g_{\mu'}= C_{i, j}(\mu) g_\mu,$$ where $C_{i,j}(\mu)$ is as defined in Equation \ref{eqn_Cij}.
\end{coro}

Lastly, we can also compute the conjectured spectral gap of the associates indexed by hook partitions. This time, we use a quotient graph argument in conjunction with elementary counting arguments to obtain Proposition \ref{prop:gapdim}.

\begin{prop} \label{prop:gapdim}
If $\phi_{\mu}^{[n-1, 1]}$ is the second largest eigenvalue of $A_{ [n-\ell, 1^{\ell}]}$, then the spectral gap of $A_{ [n-\ell, 1^{\ell}]}$ is $$(2n-1) (2n-4) (2n-6) \cdots (2\ell+2).$$
\end{prop} 
\begin{proof}
The valency of  $A_{ [n-\ell, 1^\ell]}$  is $\binom{n}{\ell}(2n-2\ell -2)!!$, as per Lemma \ref{lem:degree}. Furthermore, we have  $a_\mu = \binom{n-1}{\ell-1}(2n-2\ell -2)!!$ and $b_\mu = \binom{n-2}{\ell}(2n-2\ell -4)!!$. Then the spectral gap is
\begin{align*}
v_\mu - a_\mu + b_\mu &= \left(\binom{n}{\ell} -  \binom{n-1}{\ell-1}\right)(2n-2\ell -2)!! + \binom{n-2}{\ell}(2n-2\ell -4)!!\\
&= \binom{n-1}{\ell}2^{n-\ell-1}(n-\ell-1)!+\binom{n-2}{\ell}2^{n-\ell-2}(n-\ell-2)!\\
&=\frac{2^{n-\ell-2}(n-2)!}{\ell!}(2(n-1)+1)\\
&=(2n-1)2^{n-\ell-2}(n-2)(n-3)\cdots(\ell+1)\\
&=(2n-1) (2n-4)(2n-6)\cdots (2\ell+2),  
\end{align*}
as claimed.
\end{proof}

\section{Future Work}\label{S:Fut}
In this paper, we studied the second largest eigenvalue (equivalently, the spectral gap) of certain graphs in the perfect matching association scheme. A natural question arising from this study is the following: among all graphs in the perfect matching association scheme, which relation $\mu$ gives rise to the graph with the smallest spectral gap? We propose the following conjecture.
\begin{conjecture} \label{conj:gap} Let $n\geq 5$. Then among the adjacency matrices of the perfect matching association scheme $\{A_{\mu}|\mu\vdash n, \mu \neq [1^{n}]\}$, the smallest spectral gap is attained by $A_{[2,1^{n-2}]}$.
\end{conjecture}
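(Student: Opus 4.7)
By Proposition \ref{prop:E_2}, the spectral gap of $A_{[2,1^{n-2}]}$ is $2n-1$, so the conjecture reduces to showing $g_\mu \geq 2n-1$ for every $\mu \vdash n$ with $\mu \notin \{[1^n],[2,1^{n-2}]\}$, with equality only at $[2,1^{n-2}]$. The plan is to split the argument according to whether the location of the second largest eigenvalue of $X_\mu$ is pinned down by Conjecture \ref{conj:main}.

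For the main case, suppose $\mu$ has at least two parts of size $1$, or $\mu=[n-1,1]$ with $n\geq 5$. Then by Conjecture \ref{conj:main} the second largest eigenvalue equals $\phi_\mu^{[n-1,1]}$, and combining Lemmas \ref{lem:degree} and \ref{lem:foreig} gives the clean identity
$$
g_\mu \;=\; v_\mu \cdot \frac{(2n-1)\bigl(n-r_1(\mu)\bigr)}{2n(n-1)},
$$
so $g_\mu \geq 2n-1$ is equivalent to $v_\mu\bigl(n-r_1(\mu)\bigr) \geq 2n(n-1)$. Writing $\mu=[\nu,1^{r_1}]$ with $\nu\vdash k$ having all parts at least $2$ (so $k=n-r_1$), Lemma \ref{lem:degree} yields the factorization $v_\mu = \binom{n}{k} v_\nu$, reducing the target inequality to $k\binom{n}{k}v_\nu \geq 2n(n-1)$. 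For the hook family $\nu=[\ell]$ a direct computation gives $k\binom{n}{k}v_\nu = 2^{\ell-1}\,n!/(n-\ell)!$, which equals $2n(n-1)$ at $\ell=2$ and is strictly larger for $\ell\geq 3$; for $\nu$ with two or more non-trivial parts the growth of $v_\nu$ forces the inequality with strict slack. This pinpoints equality uniquely at $\mu=[2,1^{n-2}]$.

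The hard part will be the partitions with $r_1(\mu)\leq 1$ and $\mu\neq [n-1,1]$, for which Conjecture \ref{conj:main} is silent: here $\phi_\mu^{[n-1,1]}$ is not the second largest eigenvalue and is in fact negative when $r_1(\mu)=0$. When $\mu = [n-k,\mu']$ with $n$ large relative to $k$, Theorem \ref{thm:maintrace} bounds every non-trivial eigenvalue in absolute value by $|\phi_\mu^{[n-1,1]}|=v_\mu/(2(n-1))$, which immediately gives $g_\mu \geq v_\mu(2n-3)/(2(n-1))$; since $v_\mu$ is large for partitions with a big first part, this is comfortably above $2n-1$ in this regime.

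The main obstacle is the residual class of partitions with $r_1(\mu)\leq 1$ and no large first part, such as $[2^{n/2}]$ or $[3,2^{(n-3)/2}]$, which are covered by neither Conjecture \ref{conj:main} nor Theorem \ref{thm:maintrace}. For these, a new technique is required to locate or bound $\tau_\mu$; natural avenues are a refinement of the trace-inequality method of Section \ref{S:trace} to accommodate partitions without a large first part, or an extension of the recursive symmetric-function approach of Section \ref{S:sym} producing formulas for the relevant dominant eigenvalues. In the absence of such an extension, a fully unified proof of Conjecture \ref{conj:gap} appears to require first settling Conjecture \ref{conj:main} in full generality, or deriving spectral bounds strong enough to sidestep it.
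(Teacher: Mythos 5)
The statement you are addressing is Conjecture \ref{conj:gap}, which the paper does not prove: it is stated in the Future Work section and verified only computationally for $n\leq 15$. So there is no proof in the paper to compare against, and your proposal --- as you yourself acknowledge in its final paragraph --- is not a proof either. It is a conditional reduction. The clean part is correct and worth having: assuming Conjecture \ref{conj:main}, Lemmas \ref{lem:degree} and \ref{lem:foreig} do give $g_\mu = v_\mu\,(2n-1)(n-r_1(\mu))/(2n(n-1))$, the factorization $v_{[\nu,1^{r_1}]}=\binom{n}{k}v_\nu$ follows from Lemma \ref{lem:degree}, and the resulting inequality $k\binom{n}{k}v_\nu\geq 2n(n-1)$ with equality exactly at $\nu=[2]$ is easily checked. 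This is a genuine observation the paper does not make, and it would upgrade Conjecture \ref{conj:gap} to a theorem \emph{restricted to} partitions with at least two parts of size $1$, \emph{provided} Conjecture \ref{conj:main} were known in that generality --- but the paper only establishes \ref{conj:main} for six explicit families and for $\mu=[n-k,\mu']$ with $n\gg k$, so even this case remains conditional.

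The genuine gaps are twofold. First, the dependence on Conjecture \ref{conj:main} is essential in your main case and cannot currently be discharged. Second, the residual class you identify --- partitions with $r_1(\mu)\leq 1$, not of the form $[n-1,1]$, and without a dominant first part (e.g.\ $[2^{n/2}]$, $[3,2,1]$, $[3,3]$) --- is not covered by any result in the paper; the tables for $n=6,7$ show that for such $\mu$ the second largest eigenvalue often sits on an eigenspace other than $[2n-2,2]$ (e.g.\ for $\mu=[3,2,1]$ it is $120$ on $[6^2]$, not $80$ on $[10,2]$), so no formula analogous to the one you exploit is available. Your suggestion that one would need either to settle Conjecture \ref{conj:main} in full or to develop new spectral bounds for these partitions is an accurate diagnosis, but it means the proposal should be presented as a reduction and a partial result, not as a proof of the conjecture.
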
 

The spectral gap and the diameter of a graph are intrinsically linked. As noted in Godsil and Royle \cite[Sec.~13.5]{GodRoy}, ``\emph{graphs with small values of $\lambda_2$} (the smallest nonzero Laplacian eigenvalue, which coincides with the spectral gap in regular graphs) \emph{tend to be elongated graphs of large diameter with bridges, whereas graphs with larger values of $\lambda_2$ tend to be rounder with smaller diameter, and larger girth and connectivity}". For a more precise mathematical formulation of this relationship, we refer the reader to \cite[Ex.~13.11]{GodRoy}, \cite{mohar1991eigenvalues}, and \cite{nilli1991second}.

Motivated by the above observation, we are also interested in the diameter of the graphs in $\mathcal{A}_{2n}$. In \cite{Jen}, Jennings showed that $X_{[2, 1^{n-2}]}$ has diameter $n-1$. Combining Proposition \ref{prop:gapdim} with Jennings' results on the diameter of $X_{[2,1^{n-2}]}$, we propose the following conjecture.
\begin{conjecture} \label{conj:diam}
The diameter of $X_{[2, 1^{n-2}]}$ is the largest diameter of all the graphs in the perfect matching association scheme.
\end{conjecture}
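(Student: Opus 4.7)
The plan is to use Jennings' theorem (which gives $\operatorname{diam}(X_{[2,1^{n-2}]}) = n-1$) as the benchmark, and then show that for every other partition $\mu \vdash n$ with $\mu \neq [1^n]$ the graph $X_\mu$ is connected with $\operatorname{diam}(X_\mu) \leq n-1$. To set up the comparison, I would first reprove (or cite) the standard formula for flip distance: if $P \cup Q$ decomposes into $s$ shared edges together with $r$ non-trivial cycles of lengths $2\ell_1,\ldots,2\ell_r$ where $\ell_i \geq 2$ and $s + \sum_i \ell_i = n$, then $d_{[2,1^{n-2}]}(P,Q) = \sum_i (\ell_i - 1)$, and this quantity attains its maximum $n-1$ exactly when $P \cup Q$ is a single $2n$-cycle.

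The core strategy is a potential-function argument. Set $\Phi(P,Q) := \sum_i (\ell_i - 1)$, so that $\Phi \leq n-1$ always and $\Phi(P,Q) = 0$ iff $P = Q$. I aim to prove the following uniform descent property: for every $\mu \vdash n$ with $\mu \neq [1^n]$ and every pair $P \neq Q$, there exists a $\mu$-neighbour $P'$ of $P$ with $\Phi(P',Q) \leq \Phi(P,Q) - 1$. Iterating from $P$ toward $Q$ then immediately gives $d_\mu(P,Q) \leq \Phi(P,Q) \leq n-1$, which is the desired bound.

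The descent property should be proved by case analysis on the shape of $\mu$. The cleanest case is $\mu = [k, 1^{n-k}]$ with $k \geq 2$: any non-trivial cycle of $P \cup Q$ of half-length $\geq k$ can be shortened to a cycle of half-length exactly one less by a single $\mu$-move aligned with the $Q$-edges, while two cycles whose combined half-length is at least $k$ can be partially merged and re-separated in a way that reduces $\Phi$. The general case can be reduced to this by applying the local moves dictated by each non-trivial part of $\mu$ in parallel, using the remaining parts of size one as ``inert'' witnesses. As a preliminary, one must also establish connectedness of $X_\mu$ for every non-trivial $\mu$; this should fall out of the same constructive moves.

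The main obstacle is the sparse extremal case $\mu = [n]$, in which a $\mu$-move forces $P \cup P'$ to be a single $2n$-cycle. Here the one-step descent property can fail outright: if $P \cup Q$ already contains several short cycles, then every $\mu$-neighbour $P'$ of $P$ may temporarily increase $\Phi$ before bringing it back down. For this case I would either design a refined potential that permits bounded amortized increases, or argue directly, using the explicit $S_{2n}$-action on $\mathcal{M}_{2n}$, that any two perfect matchings are connected in $X_{[n]}$ by a path of length at most $n-1$. An analogous difficulty may arise for other partitions whose only non-trivial part is large relative to $n$, and calibrating the descent argument so that it handles these sparse cases uniformly is the most delicate portion of the plan.
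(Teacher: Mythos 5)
The first thing to note is that the paper does not prove this statement: it appears only as Conjecture~\ref{conj:diam}, verified computationally for $n \leqslant 10$, so there is no proof of record to compare yours against. Your proposal is therefore an attempt at an open problem, and as written it is a plan rather than a proof --- you explicitly leave the case $\mu = [n]$ (and other partitions whose only non-trivial part is large) unresolved, which already makes the argument incomplete.

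Beyond that acknowledged gap, the central descent lemma is false in a much broader way than you concede. If $\Phi(P,Q) = 1$, i.e.\ $P \cup Q$ is a single $4$-cycle together with $n-2$ doubled edges, then the required $\mu$-neighbour $P'$ with $\Phi(P',Q) \leqslant 0$ must equal $Q$; but $Q$ is a $\mu$-neighbour of $P$ only when $\mu = [2,1^{n-2}]$, since the cycle type of $P \cup Q$ is exactly $2[2,1^{n-2}]$. So one-step descent fails for \emph{every} $\mu \neq [2,1^{n-2}]$, not merely for $\mu = [n]$: a $\mu$-move with a part of size at least $3$, or with two or more non-trivial parts, necessarily disturbs $P$ on more vertices than the symmetric difference of $P$ and $Q$ occupies, and hence creates new non-trivial cycles in $P' \cup Q$. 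Equivalently, your descent property would give the pointwise domination $d_\mu(P,Q) \leqslant d_{[2,1^{n-2}]}(P,Q)$ for all pairs, which is simply not true (already at flip distance one), whereas the conjecture only concerns the maximum over all pairs. Any successful argument along these lines must therefore be amortized or global from the outset, permitting $\Phi$ to increase on intermediate steps, and that is precisely the part of the plan you have not supplied. As it stands, the proposal does not establish the conjecture.
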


With the assistance of a computer, we are able to confirm Conjecture \ref{conj:gap} for $n\leq 15$ and Conjecture \ref{conj:diam} for all $n \leqslant 10$ by computing the diameter of all graphs in $\mathcal{A}_{2n}$. By comparing these results on the diameter of our graphs with the matrices of eigenvalues for $n \leqslant 15$, we observe the same pattern as  Godsil and Royle's \cite{GodRoy}. Lastly, preliminary observations also suggest that graphs that correspond to relations $\mu$ that are dominated by few partitions in the domination ordering have a small diameter while graphs indexed by partitions with many parts tend to have larger diameter.  We leave the problem of computing the diameter of the graphs in the perfect matching association scheme as an interesting open problem. 

Lastly, results of Section \ref{S:trace} imply that  the second largest eigenvalue in absolute value of most matrices occurs on the $[n-1,1]$-eigenspace. Since Equation \ref{eq:deg2} implies that the multiplicity of $\phi_\mu^\lambda$ cannot be large relative to $|\phi_\mu^\lambda|$,  it raises the question of which eigenspace can have the second largest eigenvalue, in absolute value, of $X_\mu$. The matrices of eigenvalues for small values of $n$ imply that the second largest eigenvalue, in absolute value, of most graphs occurs on the $[n-1,1]$-eigenspace with very few exceptions. It would be interesting to characterize these exceptions.

\newpage

\appendix
\section{Tables of Eigenvalues of perfect matching association scheme for small \texorpdfstring{$n$}{n}}\label{appendix:small_tables}

In this appendix, we provide the tables of eigenvalues of the perfect matching association scheme for $2\leq n\leq 7$ with the second largest eigenvalue in each column is highlighted in light grey color. Note that columns are indexed partitions of $n$, $\mu$, that in fact refer to even partitions  of $2n$, denoted $2\mu$ in this paper. 

\begin{table}[H]
    \centering
    \begin{minipage}{0.46\textwidth}
        \centering
        \begin{tabular}{|l|l|l|l|}
        \hline
            [$1^2$] & [2] & Rep. & Dim. \\ \hline
            1 & 2 & [4] & 1 \\ \hline
            1 & \cellcolor{lightgray}{-1} & [$2^2$] & 2 \\ \hline
        \end{tabular}
        \caption{Eigenvalues of $\mathcal{A}_{4}$}
        \label{tab_for_n=2}
    \end{minipage}%
    \hspace{0.05\textwidth} % space between tables
    \begin{minipage}{0.48\textwidth}
        \centering
        \begin{tabular}{|l|l|l|l|l|}
        \hline
            [$1^3$] & [2,1] & [3] & Rep. & Dim. \\ \hline
            1 & 6 & 8 & [6] & 1 \\ \hline
            1 & \cellcolor{lightgray}{1} & -2 & [4,2] & 9 \\ \hline
            1 & -3 & \cellcolor{lightgray}{2} & [$2^3$] & 5 \\ \hline
        \end{tabular}
        \caption{Eigenvalues of $\mathcal{A}_{6}$}
        \label{tab_for_n=3}
    \end{minipage}
\end{table}

\begin{table}[H]
    \centering
    \begin{tabular}{|l|l|l|l|l|l|l|}
    \hline
        [$1^4$] & [2,$1^2$] & [$2^2$] & [3,1] & [4] & Rep. & Dim. \\ \hline
        1 & 12 & 12 & 32 & 48 & [8] & 1 \\ \hline
        1 & \cellcolor{lightgray}{5} & -2 & {4} & -8 & [6,2] & 20 \\ \hline
        1 & 2 & \cellcolor{lightgray}{7} & -8 & -2 & [$4^2$] & 14 \\ \hline
        1 & -1 & -2 & -2 & \cellcolor{lightgray}{4} & [4,$2^2$] & 56 \\ \hline
        1 & -6 & 3 & \cellcolor{lightgray}{8} & -6 & [$2^4$] & 14 \\ \hline
    \end{tabular}
    \caption{Eigenvalues of $\mathcal{A}_{8}$}
    \label{tab_for_n=4}
\end{table}

\begin{table}[H]
    \centering
    \begin{tabular}{|l|l|l|l|l|l|l|l|l|}
    \hline
        [$1^5$] & [2,$1^3$] &  [$2^2$,1] & [3,$1^2$] &  [3,2] & [4,1] & [5] & Rep. & Dim. \\ \hline
        1 & 20 & 60 & 80 & 160 & 240 & 384 & [10] & 1 \\ \hline
        1 & \cellcolor{lightgray}{11} & 6 & \cellcolor{lightgray}{26} & -20 & \cellcolor{lightgray}{24} & -48 & [8,2] & 35 \\ \hline
        1 & 6 & 11 & -4 & \cellcolor{lightgray}{20} & -26 & -8 & [6,4] & 90 \\ \hline
        1 & 3 & -10 & 2 & -4 & -8 & 16 & [6,$2^2$] & 225 \\ \hline
        1 & 0 & 5 & -10 & -10 & 10 & 4 & [$4^2$,2] & 252 \\ \hline
        1 & -4 & -3 & 2 & 10 & 6 & -12 & [4,$2^4$] & 300 \\ \hline
        1 & -10 & \cellcolor{lightgray}{15} & 20 & -20 & -30 & \cellcolor{lightgray}{24} & [$2^5$] & 42 \\ \hline
    \end{tabular}
    \caption{Eigenvalues of $\mathcal{A}_{10}$}
    \label{tab_for_n=5}
\end{table}
\begin{table}[H]
    \centering
    \rotatebox{0}{\begin{tabular}{|l|l|l|l|l|l|l|l|l|l|l|l|l|}
    \hline
        [$1^6$] & [2,$1^4$] & [$2^2$,$1^2$] & [$2^3$] & [3,$1^3$] & [3,2,1] & [$3^2$] & [4,$1^2$] & [4,2] & [5,1] & [6] & Rep. & Dim. \\ \hline
        1 & 30 & 180 & 120 & 160 & 960 & 640 & 720 & 1440 & 2304 & 3840 & [12] & 1 \\ \hline
        1 & \cellcolor{lightgray}{19} & \cellcolor{lightgray}{48} & -12 & \cellcolor{lightgray}{72} & 80 & -64 & \cellcolor{lightgray}{192} & -144 & \cellcolor{lightgray}{192} & -384 & [10,2] & 54 \\ \hline
        1 & 12 & 27 & \cellcolor{lightgray}{30} & 16 & 24 & -8 & -18 & \cellcolor{lightgray}{108} & -144 & -48 & [8,4] & 275 \\ \hline
        1 & 9 & -12 & -12 & 22 & -60 & 16 & 12 & -24 & -48 & \cellcolor{lightgray}{96} & [8,$2^2$] & 616 \\ \hline
        1 & 9 & 33 & -27 & -8 & \cellcolor{lightgray}{120} & \cellcolor{lightgray}{136} & -78 & -114 & -48 & -24 & [$6^2$] & 132 \\ \hline
        1 & 4 & 3 & -2 & -8 & 0 & -24 & -18 & -4 & 32 & 16 & [6,4,2] & 2673 \\ \hline
        1 & 0 & -21 & 6 & 4 & 12 & 16 & -6 & 12 & 24 & -48 & [6,$2^3$] & 1925 \\ \hline
        1 & 0 & 15 & \cellcolor{lightgray}{30} & -20 & -60 & 40 & 30 & -60 & 24 & 0 & [$4^3$] & 462 \\ \hline
        1 & -3 & 3 & -9 & -8 & 0 & 4 & 24 & 24 & -24 & -12 & [$4^2$,$2^2$] & 2640 \\ \hline
        1 & -8 & 3 & 6 & 12 & 20 & -16 & -6 & -36 & -24 & 48 & [4,$2^4$] & 1485 \\ \hline
        1 & -15 & 45 & -15 & 40 & -120 & 40 & -90 & 90 & 144 & -120 & [$2^6$] & 132 \\ \hline
    \end{tabular}}
    \caption{Eigenvalues of $\mathcal{A}_{12}$}
    \label{tab_for_n=6}
\end{table}

\pagenumbering{gobble}

\begin{table}[H]
\small
    \centering
    \rotatebox{90}{\begin{tabular}{|l|l|l|l|l|l|l|l|l|l|l|l|l|l|l|l|l|}
    \hline
        [$1^7$] & [2, $1^5$] & [$2^2$,$1^3$] & [$2^3$,1] & [3,$1^4$] & [3,2,$1^2$] & [3,$2^2$] & [$3^2$,1] & [4,$1^3$] & [4,2,1] & [4,3] & [5,$1^2$] & [5,2] & [6,1] & [7] & Rep. & Dim. \\ \hline
        1 & 42 & 420 & 840 & 280 & 3360 & 3360 & 4480 & 1680 & 10080 & 13440 & 8064 & 16128 & 26880 & 46080 & [14] & 1 \\ \hline
        1 & \cellcolor{lightgray}{29} & \cellcolor{lightgray}{160} & 60 & \cellcolor{lightgray}{150} & \cellcolor{lightgray}{760} & -280 & 320 & \cellcolor{lightgray}{640} & \cellcolor{lightgray}{720} & -1120 & \cellcolor{lightgray}{1824} & -1344 & \cellcolor{lightgray}{1920} & -3840 & [12,2] & 77 \\ \hline
        1 & 20 & 79 & \cellcolor{lightgray}{114} & 60 & 148 & \cellcolor{lightgray}{368} & -184 & 118 & 180 & -112 & -120 & \cellcolor{lightgray}{816} & -1104 & -384 & [10,4] & 637 \\ \hline
        1 & 17 & 16 & -84 & 66 & -56 & -136 & -64 & 160 & -432 & 224 & 96 & -192 & -384 & \cellcolor{lightgray}{768} & [10,$2^2$] & 1365 \\ \hline
        1 & 15 & 69 & 39 & 10 & 228 & -42 & \cellcolor{lightgray}{376} & -102 & 90 & \cellcolor{lightgray}{588} & -360 & -504 & -264 & -144 & [8,6] & 1001 \\ \hline
        1 & 10 & 9 & 14 & 10 & -42 & -52 & -64 & -22 & 100 & -112 & -100 & -24 & 176 & 96 & [8,4,2] & 12012 \\ \hline
        1 & 6 & -39 & -18 & 22 & -78 & 84 & 112 & 6 & -36 & 48 & -36 & 72 & 144 & -288 & [8,$2^3$] & 7644 \\ \hline
        1 & 7 & 21 & -49 & -14 & 84 & 14 & 56 & -70 & -182 & -196 & 56 & 168 & 56 & 48 & [$6^2$,2] & 6435 \\ \hline
        1 & 4 & 15 & 50 & -20 & -60 & 80 & -40 & -10 & -140 & 80 & 104 & -144 & 80 & 0 & [6,$4^2$] & 9009 \\ \hline
        1 & 1 & -9 & -13 & -8 & 12 & -16 & -28 & -4 & 64 & 68 & 44 & 12 & -76 & -48 & [6,4,$2^2$] & 42042 \\ \hline
        1 & -4 & -29 & 42 & 12 & 52 & -16 & 32 & -14 & -36 & -112 & 24 & -48 & -96 & 192 & [6,$2^4$] & 14014 \\ \hline
        1 & -3 & 15 & 15 & -20 & -60 & -60 & 100 & 60 & 0 & -60 & -36 & 108 & -60 & 0 & [$4^3$,2] & 12012 \\ \hline
        1 & -7 & 7 & -21 & 0 & 28 & 56 & -28 & 28 & 0 & -28 & -84 & -84 & 84 & 48 & [$4^2$,$2^3$] & 21450 \\ \hline
        1 & -13 & 25 & 15 & 30 & -20 & -70 & -40 & -50 & -90 & 140 & 24 & 168 & 120 & -240 & [4,$2^5$] & 7007 \\ \hline
        1 & -21 & 105 & -105 & 70 & -420 & 210 & 280 & -210 & 630 & -420 & 504 & -504 & -840 & 720 & [$2^7$] & 429 \\ \hline
    \end{tabular}}
    \caption{Eigenvalues of $\mathcal{A}_{14}$}
    \label{tab_for_n=7}
\end{table}
\end{document}